\def\hide#1{}
\def\old#1{}
\def\oop#1{}
\def\gap#1{}
\let\eps\varepsilon
\theoremstyle{plain}
\newtheorem{theorem}{Theorem}
\newtheorem{proposition}{Proposition}
\newtheorem*{theorem*}{Theorem}
\newtheorem{lemma}{Lemma}
\newtheorem{remark}{Remark}
\newtheorem{corollary}{Corollary}
\def \ZZ  {\Bbb Z}
\begin{document}
\newcounter{figcounter}
\setcounter{figcounter}{0}
\addtocounter{figcounter}{1}

\title{
On the coverings of  Euclidian manifolds $\mathcal{B}_1$ and $\mathcal{B}_2$} 
\author{
G.~Chelnokov\\
{\small\tt grishabenruven@yandex.ru }
\\ [2ex]
M.~Deryagina\thanks{This work was supported by the Russian Foundation for Basic Research (grant 13-01-00513).}\\
{\small\em Sobolev Institute of Mathematics, Russia}\\
{\small\em Moscow State University of Technologies and} \\
{\small\em Management named after K.G. Razumovskiy, Russia}\\
{\small\tt madinaz@rambler.ru }
\\ [2ex]
A.~Mednykh\thanks{This work was supported by the Laboratory of
Quantum Topology, Chelyabinsk State University,
  under contract no. 14.Z50.31.0020 with the Ministry of Education
  and Science of the Russian Federation, and the Russian Foundation for Basic Research (grant 15-01-07906).}\\
{\small\em Sobolev Institute of Mathematics,} \\
{ \small\em Novosibirsk, Russia}\\
{\small\em Novosibirsk State University,}\\
{\small\em Novosibirsk, Russia}\\
{\small\em Chelyabinsk State University,}\\
{\small\em Chelyabinsk, Russia}\\
 {\small\tt mednykh@math.nsc.ru}
}
\date{}
\maketitle

\begin{abstract}
  There are only 10 Euclidean three dimensional forms, that is, compact locally Euclidean 3-manifold without
boundary. Six of them, $\mathcal{G}_{1}$, $\mathcal{G}_{2}$, $\mathcal{G}_{3}$,
$\mathcal{G}_{4}$, $\mathcal{G}_{5}$, $\mathcal{G}_{6},$  are orientable and the other four, $\mathcal{B}_{1}$, $\mathcal{B}_{2}$,
$\mathcal{B}_{3}$, $\mathcal{B}_{4},$ are non-orientable manifolds. Following J.~H.~Conway  and J.~P.~Rossetti, we call them platycosms.  In the present paper,  a new algebraic  method  is given to classify  and enumerate $n$-fold coverings over platycosms.
We decribe all types of $n$-fold coverings over $\mathcal{B}_{1}$ and over
$\mathcal{B}_{2}$, and calculate the numbers of non-equivalent
coverings of each type. Recall that the manifolds $\mathcal{B}_{1}$ and $\mathcal{B}_{2}$ are uniquely determined among the others non-orientable forms by their homology groups $H_1(\mathcal{B}_{1})=\ZZ_2\times\ZZ^2$ and
$H_1(\mathcal{B}_{2})=\ZZ^2$.

{\bf Key Words:} Euclidean form, platycosm, amphicosms, flat
3-manifold, non-equivalent coverings, crystallographic group.

{\bf 2010 Mathematics Subject Classification:}  20H15,  57M10,
55R10.
\end{abstract}

\section*{Introduction}
 Let $\mathcal{N}$, $\mathcal{N'}$ and  $\mathcal{M}$  be connected manifolds. Two coverings  \mbox{$\rho : \mathcal{N} \longrightarrow \mathcal{M}$} and $\rho' : \mathcal{N'} \longrightarrow \mathcal{M}$  are  \emph{equivalent} if  there exists a homeomorphism  $\eta : \mathcal{N} \longrightarrow \mathcal{N'}$ such that $\rho = \rho' \circ \eta$.

From general theory of covering spaces, it follows that any $n$-fold
covering over $\mathcal{M}$ corresponds to the unique subgroup of
index $n$ in the fundamental group $\pi_{1}(\mathcal{M})$. Two
coverings are equivalent if and only if
 the corresponding subgroups are conjugate in $\pi_{1}(\mathcal{M})$ (see \cite{Hatcher} p.~67). The equivalence classes of $n$-fold covering of $N$ are in one-to-one correspondence with the conjugacy classes of subgroups of index $n$ in the fundamental group $\pi_1(N).$

The number of subgroups of a given index in the free group $F_r$ was determined by M. Hall \cite{Ha49}. An explicit formula for the number of conjugacy classes of subgroups of a given index in $F_r$ was obtained by V.~A.~Liskovets \cite{Li71}. Both problems for the fundamental group $\Gamma_g$ of a closed orientable surface of genus $g$ were completely resolved  in \cite{Med78} and \cite{Med79} respectively. For the fundamental group $\Phi_p$ of a closed non-orientable surface of genus $p$ they were resolved in \cite{MP86}. See paper \cite{Medn} for a short proof of the above mentioned results.
 In the paper \cite{LisM} the results were extended    to some 3-dimensional manifolds which are circle bundles over a surface.

  Following Conway-Rossetti (\cite{Conway}), we use the term platycosm
("flat universe") for a compact locally Euclidean 3-manifold without
boundary. In the present paper we suggest a new algebraic  method  to classify   $n$-fold coverings over amphicosms and enumerate   them. It is important to emphasize that numerical methods to solve these problems was developed by the Bilbao group \cite{babaika}.

Platycosms   are the simplest alternative universes for us
to think of living in \cite{LSW}. (Jeff Weeks created the program,
which allows to "fly" through platycosms and other spaces.)
If you lived in a small enough platycosm, you would appear to be
surrounded by images of yourself which can be arranged in one of ten
essentially different ways. (See \cite{Conway}).


There are six orientable platicosms, denoted in Wolfs notation
$\mathcal{G}_{1}$, $\mathcal{G}_{2}$, $\mathcal{G}_{3}$,
$\mathcal{G}_{4}$, $\mathcal{G}_{5}$, $\mathcal{G}_{6}$ and four
non-orientable manifolds $\mathcal{B}_{1}$, $\mathcal{B}_{2}$,
$\mathcal{B}_{3}$, $\mathcal{B}_{4}$. The aim of this paper is to
classify types of $n$-fold coverings over $\mathcal{B}_{1}$ and over
$\mathcal{B}_{2}$, and calculate the numbers of non-equivalent
coverings of each type. We classify all types of subgroups in the
fundamental groups $\mathcal{B}_{1}$ and $\mathcal{B}_{2}$
respectively, and   calculate the numbers
 of conjugated classes of  each type of subgroups for index $n$.

\subsection*{Notations}
During this paper we will use the following notations:
$s_G(n)$ is the total number of subgroups of index $n$ in the group
$G$, $s_{H,G}(n)$ is the number of subgroups of index $n$ in the
group $G$, isomorphic to the group $H$. The same way $c_G(n)$ is the
total number of conjugancy classes of subgroups of index $n$ in the
group $G$, $c_{H,G}(n)$ is the number conjugancy classes of
subgroups of index $n$ in the group $G$, isomorphic to the group
$H$.

$$\sigma_0(n) = \displaystyle \sum_{k \mid n}1 \quad \mbox{if $n$ is
natural, $\sigma_0(n)=0$ otherwise},$$

$$\sigma_1(n) = \displaystyle \sum_{k \mid n} k \quad \mbox{if $n$ is
natural, $\sigma_1(n)=0$ otherwise},$$
$$\sigma_2(n) = \displaystyle \sum_{k \mid n} \sigma_1(k) \quad
\mbox{if $n$ is natural, $\sigma_2(n)=0$ otherwise}.
$$
The epimorphism $\phi: \pi_1(\mathcal{B}_1) \to \ZZ^2$ is determined
in \Cref{prop2}, the epimorphism $\psi: \pi_1(\mathcal{B}_2) \to
\ZZ^2$ is determined in \Cref{prop2-2}.

The invariant of a subgroup $\Delta \leqslant \pi_1(\mathcal{B}_1)$,
$l(\Delta)$ is determined in \Cref{prop3}, its analog for $\Delta
\leqslant \pi_1(\mathcal{B}_2)$ is determined in \cref{prop2-3}.

The invariants $\rho(\Delta)$ and $\eps(\Delta)$, applied to
non-abelian subgroup $\Delta\leqslant \pi_1(\mathcal{B}_1)$ only,
are determined after \Cref{prop11}, analogous ones for
$\Delta\leqslant \pi_1(\mathcal{B}_2)$ are determined after
\Cref{prop2-11}.

In this paper we widely use the the summing $\displaystyle\sum_{2k
\mid n}$. We consider it equals zero if $n$ is odd since in this
case the sum is taken over the empty set of terms.

\bigskip

One can find the correspondence between Wolf's and Conway-Rossetti's
notations of flat 3-manifold and its fundamental groups in Table~1.
We use Wolf's notation.

\bigskip

\begin{tabular}{|l|l|l|l|l|}
\hline name&\begin{tabular}{c}Conway-\\Rosetti\end{tabular}&other
names&Wolf&\begin{tabular}{c}fund.group\\(internatl.\\no
name)\end{tabular} \\ \cline{1-5} \hline
 torocosm&$c_1$&3-torus&$\mathcal{G}_{1}$&\;\;\;\;\;1.$P1$\\
\hline
dicosm&$c_2$&half turn space&$\mathcal{G}_{2}$&\;\;\;\;\;4.$P2_1$ \\
\hline tricosm&$c_3$&one-third turn
space&$\mathcal{G}_{3}$&\begin{tabular}{c}
144.$P3_1$ \\
145.$P3_2$
\end{tabular}\\
\hline tetracosm&$c_4$&quarter turn
space&$\mathcal{G}_{4}$&\begin{tabular}{c} \;\;76.$P4_1$\\
\;\;78.$P4_3$
\end{tabular} \\
\hline
hexacosm&$c_6$&one-sixth turn space&$\mathcal{G}_{5}$&\begin{tabular}{c}169.$P6_1$\\170.$P6_5$\end{tabular}\\
\hline
didicosm&$c_{22}$&\begin{tabular}{c}Hantzsche-Wendt\\space\end{tabular}&$\mathcal{G}_{6}$&\;\;\;\;19.$P2_12_12_1$\\
\hline
first amphicosm&$+a_1$&\begin{tabular}{c}Klein bottle times\\circle\end{tabular}&$\mathcal{B}_{1}$&\;\;\;\;\;7.$Pc$\\
\hline
second amphicosm&$-a_1$& &$\mathcal{B}_{2}$&\;\;\;\;\;9.$Cc$\\
\hline
 first amphidicosm  & $+a_2$  &  & $\mathcal{B}_{3}$  & \;\;\;29.$Pca2_1$ \\
\hline
 second amphidicosm  & $-a_2$  &  & $\mathcal{B}_{4}$  & \;\;\;33.$Pa2_1$ \\
\hline
\end{tabular}
\begin{center}

\small{Table~1}
\end{center}

%

\section{The brief overview of achieved results}
Since the problem of enumeration of $n$-fold coverings reduces to
the problem of enumeration of conjugacy classes of some subgroups,
it is natural to expect that the enumeration of subgroups without
respect of conjugacy would be helpful. The next theorem provides the
complete solution of this problem. For brevity we use notation $Pc$
for $\pi_{1}(\mathcal{B}_{1})$ and $Cc$ for
$\pi_{1}(\mathcal{B}_{2})$, see Table~1.

\begin{theorem}\label{th1}
Every subgroup $\Delta$ of finite index $n$ in
$\pi_{1}(\mathcal{B}_{1})$ is isomorphic to either $\ZZ^3$, or
$\pi_{1}(\mathcal{B}_{1})$, or $\pi_{1}(\mathcal{B}_{2})$, and
$$
s_{\ZZ^3, \pi_1(\mathcal{B}_{1})}(n) =  \sum_{2l \mid n}
\sigma_1(l)l,\leqno (i)
$$
$$
s_{\pi_{1}(\mathcal{B}_{1}), \pi_1(\mathcal{B}_{1})}(n) = \sum_{l
\mid n} \sigma_1(\frac{n}{l})l-\sum_{2l \mid
n}\sigma_1(\frac{n}{2l})l, \leqno (ii)
$$
$$
s_{\pi_{1}(\mathcal{B}_{2}), \pi_1(\mathcal{B}_{1})}(n) = \sum_{2l
\mid n} 2l\sigma_1(\frac{n}{2l})-\sum_{4l \mid
n}2l\sigma_1(\frac{n}{4l}). \leqno (iii)
$$

\end{theorem}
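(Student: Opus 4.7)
My plan is to use the translation-lattice extension of $Pc=\pi_{1}(\mathcal{B}_{1})$ to force a trichotomy on finite-index subgroups, then count each isomorphism type using the invariants $\phi,l,\rho,\eps$ from the earlier propositions. Specifically, $Pc$ fits into a short exact sequence $1\to T\to Pc\to\ZZ/2\to 1$, where $T\cong\ZZ^{3}$ is the translation lattice (the characteristic index-$2$ subgroup of orientation-preserving elements) and the generator of $\ZZ/2$ acts on $T$ by a reflection $A$ of determinant $-1$. If $\Delta\leqslant Pc$ lies in $T$, then $\Delta$ is a sublattice of $\ZZ^{3}$, hence $\Delta\cong\ZZ^{3}$. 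Otherwise $L:=\Delta\cap T$ is an $A$-invariant sublattice of $T$ of index $n=[Pc:\Delta]$, and $\Delta$ is itself a Bieberbach group with translation lattice $L$ and holonomy $\ZZ/2$ acting through $A|_{L}$; since this holonomy has determinant $-1$ and order $2$, the classification of 3-dimensional flat manifolds leaves only $\Delta\cong Pc$ or $\Delta\cong Cc=\pi_{1}(\mathcal{B}_{2})$. The invariants $\rho(\Delta)$ and $\eps(\Delta)$ introduced after Proposition~11 detect which of the two occurs; a quick check (using that $A$ is a reflection, so $T^{A}$ has rank only $2$) also rules out abelian subgroups of rank $3$ outside $T$, so no other isomorphism type is possible.

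\textbf{Counting the three types.} For (i), $\Delta\cong\ZZ^{3}$ forces $\Delta\leqslant T$ and $[T:\Delta]=n/2$, so $n$ must be even; the Hermite normal form enumeration gives $\sum_{abc=m}a^{2}b=\sum_{a\mid m}a^{2}\sigma_{1}(m/a)$ sublattices of $\ZZ^{3}$ of index $m$, which a standard Dirichlet identity rewrites as $\sum_{l\mid m}l\,\sigma_{1}(l)$; substituting $m=n/2$ gives formula~(i). For (ii) and (iii), I parameterize each $\Delta\not\leqslant T$ by a pair $(L,Lg)$, where $L$ is an $A$-invariant sublattice of $T$ of index $n$ and $g\in Pc\setminus T$ is a lift satisfying $g^{2}\in L$, modulo $g\sim\ell g$ for $\ell\in L$. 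The $A$-invariant sublattices split into two families according to the parity of the index on the $(-1)$-eigenlattice, a feature already encoded in $l(\Delta)$ from Proposition~3; the leading sums $\sum_{l\mid n}\sigma_{1}(n/l)\,l$ and $\sum_{2l\mid n}2l\,\sigma_{1}(n/2l)$ count all admissible pairs in each family. For fixed $L$ the admissible cosets $Lg$ form a torsor over an explicit quotient of $L^{A}$, and the invariants $\rho(\Delta),\eps(\Delta)$ partition these cosets into the $Pc$-type and the $Cc$-type. The subtracted terms $\sum_{2l\mid n}\sigma_{1}(n/2l)\,l$ in (ii) and $\sum_{4l\mid n}2l\,\sigma_{1}(n/4l)$ in (iii) then arise by M\"obius-type inversion, removing from each family the subgroups of the wrong isomorphism type.

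\textbf{Main obstacle.} The trichotomy and (i) are structural and should be routine. The technical heart is the accounting in (ii)--(iii): one must (a) enumerate $A$-invariant sublattices of index $n$ with the correct parity bookkeeping on each eigenlattice, (b) determine the values of $\rho,\eps$ for each candidate coset $Lg$ (equivalently, pin down the class of $g^{2}$ in an appropriate quotient of $L^{A}$), and (c) telescope the resulting nested sums into the differences of divisor sums displayed. The combinatorial appearance of the factors $2l$ versus $l$ and of the index ranges $2l\mid n$ versus $4l\mid n$ is exactly what the $Pc$/$Cc$ inclusion-exclusion must produce, and getting those simplifications to line up with the stated closed forms will be the most delicate step.
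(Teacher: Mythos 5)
Your trichotomy and part (i) are essentially correct, but you reach them by a genuinely different route than the paper: you pass to the translation lattice $T=\langle a^2,b,c\rangle$ and invoke the Bieberbach--Wolf classification of flat $3$-manifold groups with $\ZZ_2$ holonomy generated by a determinant $-1$ element, whereas the paper avoids the classification altogether, parameterizing non-abelian subgroups by the $4$-plets $(l(\Delta),\phi(\Delta),\rho(\Delta),\eps(\Delta))$ (\Cref{prop12}) and proving $\Delta\cong Pc$ when $\eps(\Delta)=0$ and $\Delta\cong Cc$ when $\eps(\Delta)=l(\Delta)/2$ by exhibiting explicit generators (\Cref{prop13}). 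Note also that your sentence ``the invariants $\rho(\Delta)$ and $\eps(\Delta)$ detect which of the two occurs'' is precisely the content of \Cref{prop13} and is only asserted in your write-up; in your coset picture it amounts to reading off the isomorphism type of $\Delta=L\cup Lg$ from the class of $g^2$, which still has to be proved before any counting can start (and in fact only $\eps$, not $\rho$, is relevant).

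The genuine gap is in your accounting for (ii)--(iii). The mechanism you propose --- leading sums $\sum_{l\mid n}\sigma_1(n/l)\,l$ and $\sum_{2l\mid n}2l\,\sigma_1(n/2l)$ counting ``all admissible pairs in each family,'' with the subtracted terms removing ``subgroups of the wrong isomorphism type'' by M\"obius-type inversion --- is not what produces these formulas, and it already fails at $n=2$: there are exactly $6$ non-abelian subgroups of index $2$ ($4$ of type $Pc$, $2$ of type $Cc$), while your two leading sums give $5$ and $2$ (total $7$, i.e.\ the number of \emph{all} index-$2$ subgroups including the abelian one $T$), and the subtracted terms are $1$ and $0$, which match neither the number of $Cc$'s nor any family split of the $6$ subgroups. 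In the correct count the subtraction $\sigma_1(n/l)-\sigma_1(n/2l)$ has nothing to do with the $Pc$/$Cc$ dichotomy: it counts the index-$(n/l)$ sublattices $\phi(\Delta)\leqslant\ZZ^2$ that are \emph{not} contained in $\{(2x,z)\}$ (\Cref{prop4}); in your picture this is the exclusion of those $A$-invariant lattices $L$ for which the condition $g^2\in L$ has no solution at all (e.g.\ $L=\langle a^4,b,c\rangle$ admits no admissible coset, since $g^2$ always has odd $a^2$-coordinate and zero $b$-coordinate), not the removal of subgroups of the other type. The split between $Pc$ and $Cc$ is then governed solely by $\eps(\Delta)$, which is constrained by $2\eps\equiv 0 \bmod l$: for each admissible pair $(l,\phi(\Delta))$ there are exactly $l$ subgroups with $\eps=0$ (type $Pc$) and, only when $l$ is even, exactly $l$ subgroups with $\eps=l/2$ (type $Cc$). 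The factors ``$2l$'' and the range ``$4l\mid n$'' in (iii) are merely the result of writing the even divisor as $l=2l'$ in $l\bigl(\sigma_1(n/l)-\sigma_1(n/2l)\bigr)$, not the outcome of an inclusion--exclusion between isomorphism types. So the heart of (ii)--(iii) in your plan needs to be replaced by a per-lattice count of admissible cosets together with a proved $\eps$-criterion for the type, exactly the role played by \Cref{prop12} and \Cref{prop13} in the paper.
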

To prove this theorem we need the following propositions. To prove
this theorem we need the following propositions. \Cref{prop2}
presents a canonical form of elements in $\pi_1(\mathcal{B}_1)$ and
introduce an invariant $\phi(\Delta)$. \Cref{prop4} provides a
necessary and sufficient condition that the subgroup
$\Delta\leqslant \pi_1(\mathcal{B}_1)$ is abelian in terms of
$\phi(\Delta)$. This proposition also provides the fact that all
abelian subgroups of finite index in $\pi_1(\mathcal{B}_1)$ belongs
to one largest abelian subgroup (this is certainly false for abelian
subgroups of infinite index). Finally, \Cref{prop12} show that
4-plet of introduced invariants
$((l(\Delta),\phi(\Delta),\rho(\Delta),\eps(\Delta))$ determine a
non-abelian subgroup uniquely and \Cref{prop13} describes the type
of subgroup in terms of $\eps(\Delta)$.

Theorem 2 provides the total number of conjugacy classes of
subgroups of index $n$ in $\pi_{1}(\mathcal{B}_{1})$, thus the total
number of non-equivalent $n$-fold coverings of $\mathcal{B}_{1}$.
\begin{theorem}\label{th1.2}
The total number of non-equivalent $n$-fold coverings over
$\mathcal{B}_1$ is
\begin{displaymath}
 c_{\pi_{1}(\mathcal{B}_{1})}(n)= \frac{1}{n}\sum_{\substack{l | n \\ l m = n}}(\varphi_{3}(l)\sum_{2k \mid m} \sigma_1(k)k +  \sum_{\substack{ d | l}}\,\mu \left( \frac{l}{d}\right) (2,d) d^{2}\sum_{k \mid m} (\sigma_1(\frac{m}{k})-\sigma_1(\frac{m}{2k}))k +
 \end{displaymath}
 \begin{displaymath}
 +\varphi_{2}(l)\sum_{2k \mid m}
(\sigma_1(\frac{m}{2k})-\sigma_1(\frac{m}{4k}))2k)\,,
 \end{displaymath}
 where  $\mu (k)$ is the M\"{o}bius function, $\varphi_{3} (l)$ and  $\varphi_{2} (l)$ are Jordan totient functions,$(2,d)$ is a greater common divisor of numbers  $2$ and $d$.
\end{theorem}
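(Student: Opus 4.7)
The plan is to apply the classical orbit-counting identity
$$c_G(n) \;=\; \frac{1}{n}\sum_{\substack{H\leqslant G \\ [G:H]=n}}[N_G(H):H],$$
which holds for any group $G$ because the conjugacy class of $H$ has cardinality $[G:N_G(H)]=n/[N_G(H):H]$, so that summing $[N_G(H):H]$ counts each conjugacy class with weight $n$. Taking $G=\pi_1(\mathcal{B}_1)$ and using \Cref{th1} to split the sum according to the isomorphism type of $H$ (one of $\ZZ^3$, $Pc$, $Cc$) decomposes the right-hand side into three pieces, matching the three summands in the theorem. I would reparametrize each piece by the pair $(l,m)$ with $l=[G:N_G(H)]$ and $m=n/l=[N_G(H):H]$, which produces the outer structure $\frac{1}{n}\sum_{lm=n}(\cdots)$.

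For the abelian case, \Cref{prop4} provides the unique maximal abelian subgroup $A\leqslant G$ of index $2$, and every $H\cong\ZZ^3$ of index $n$ lies in $A$. Thus $A\leqslant N_G(H)$ always, and $[N_G(H):H]$ is determined by whether the non-trivial coset of $A$ stabilises the sublattice $H\leqslant A\cong\ZZ^3$ under the induced $\ZZ/2$-involution. Counting sublattices whose conjugation orbit has size $l$ invokes the Jordan totient $\varphi_3(l)$, the number of primitive triples in $(\ZZ/l\ZZ)^3$; combining with the inner count $s_{\ZZ^3,Pc}(m)=\sum_{2k\mid m}\sigma_1(k)k$ from \Cref{th1} yields the first summand.

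For the non-abelian cases, \Cref{prop12} assigns to each $H$ the four-tuple $(l(H),\phi(H),\rho(H),\eps(H))$, with $\eps(H)$ distinguishing $H\cong Pc$ from $H\cong Cc$ by \Cref{prop13}. I would write down explicitly how $G$-conjugation acts on this tuple: on $\phi(H)\leqslant\ZZ^2$ via the canonical action induced by $\phi:G\to\ZZ^2$, on $\rho(H)$ by an additive shift, and trivially on $\eps(H)$. The normalizer $N_G(H)$ is then the stabiliser of the tuple, and its index is read off from the sublattice $\phi(H)$ and the parity of $\rho(H)$. Counting tuples with a prescribed normalizer index, and combining with parts (ii)--(iii) of \Cref{th1} for the inner $m$-count, produces the $Cc$-summand via the factor $\varphi_2(l)$ (primitive pairs mod $l$) and the $Pc$-summand after Möbius inversion, which generates the factor $\sum_{d\mid l}\mu(l/d)(2,d)d^2$.

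The principal obstacle is the Möbius inversion in the $Pc$-piece: it is natural to enumerate subgroups whose normalizer index merely \emph{divides} $l$ by fixing an auxiliary sublattice of index $d\mid l$, and the passage to normalizer index \emph{exactly} $l$ introduces the factor $\mu(l/d)$, while the extra greatest-common-divisor factor $(2,d)$ captures how the involution in $G/A\cong\ZZ/2$ interacts with the parity class of $\rho(H)$. Once the conjugation action on the invariants is nailed down and these enumerations are verified, collecting the three contributions and dividing by $n$ yields the claimed expression.
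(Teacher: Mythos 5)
Your starting identity $c_G(n)=\frac{1}{n}\sum_{[G:H]=n}[N_G(H):H]$ is true, but the way you then match it to the stated formula contains a genuine error: you identify the outer parameter $l$ with the normalizer index $[G:N_G(H)]$ and $m$ with $[N_G(H):H]$, and this identification is incompatible with the formula you are trying to prove. Concretely, for the abelian piece every $H\cong\ZZ^3$ of index $n$ lies in the index-two abelian subgroup $A=\langle a^2,b,c\rangle$, so $A\leqslant N_G(H)$ and $[G:N_G(H)]\in\{1,2\}$; hence a sum organized by normalizer index can only have contributions at $l=1,2$, whereas the first summand of the theorem carries $\varphi_3(l)$ over \emph{all} divisors $l$ of $n$. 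Likewise, the inner factors $\sum_{2k\mid m}\sigma_1(k)k$, $\sum_{k\mid m}(\sigma_1(\frac{m}{k})-\sigma_1(\frac{m}{2k}))k$, $\sum_{2k\mid m}(\sigma_1(\frac{m}{2k})-\sigma_1(\frac{m}{4k}))2k$ are, by \Cref{th1}, the numbers of subgroups of index $m$ (a proper divisor of $n$ in general), not counts of index-$n$ subgroups with prescribed normalizer index, and no argument is given (nor is one available) that the two coincide. The asserted interpretations of $\varphi_3(l)$, $\varphi_2(l)$ and of the M\"obius factor $\sum_{d\mid l}\mu(l/d)(2,d)d^2$ as orbit/stabilizer counts are likewise unsubstantiated.

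The formula actually arises from a different identity: Mednykh's theorem
$c_{\Gamma}(n)=\frac{1}{n}\sum_{lm=n}\sum_{K<_m\Gamma}|Epi(K,\ZZ_l)|$,
in which $m$ is the index of a subgroup $K$ and $l$ is the order of a cyclic group onto which $K$ is mapped. With \Cref{th1} giving the isomorphism types and the numbers $s_{\ZZ^3,Pc}(m)$, $s_{Pc,Pc}(m)$, $s_{Cc,Pc}(m)$ of index-$m$ subgroups, one only needs $|Epi(K,\ZZ_l)|$ for $K\cong\ZZ^3,Pc,Cc$; these are computed from the abelianizations $H_1(\ZZ^3)=\ZZ^3$, $H_1(Pc)=\ZZ_2\oplus\ZZ^2$, $H_1(Cc)=\ZZ^2$ (\Cref{h1}), giving $\varphi_3(l)$, $\sum_{d\mid l}\mu(l/d)(2,d)d^2$, and $\varphi_2(l)$ respectively (\Cref{epi}). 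Substituting these three epimorphism counts against the three subgroup counts at index $m$ yields the theorem. If you wish to keep your orbit-counting route, you would instead have to determine, for each isomorphism type, the distribution of normalizer indices among index-$n$ subgroups (as is done in the proof of \Cref{th1.3}) and then verify agreement with the displayed formula by a separate computation; as written, your proposal does not establish the statement.
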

The next theorem provides the number of conjugacy classes of
subgroups of index $n$ in $\pi_{1}(\mathcal{B}_{1})$ with respect of
the isomorphism type of a subgroup, thus the number of $n$-fold
coverings of $\mathcal{B}_{1}$ with respect of the isomorphism type
of a cover.
\begin{theorem}\label{th1.3}
Let $\mathcal{N} \to \mathcal{B}_{1}$ be an $n$-fold covering over
$\mathcal{B}_{1}$. If $n$ is odd then $\mathcal{N}$ is homeomorphic
to $\mathcal{B}_{1}$. If $n$ is even then $\mathcal{N}$ is
homeomorphic to $\mathcal{G}_{1}$ or $\mathcal{B}_{1}$ or
$\mathcal{B}_{2}$. The corresponding numbers of nonequivalent
coverings are given by the following formulas:
$$
c_{\ZZ^3,\pi_{1}(\mathcal{B}_{1})}(n) = \sum_{2l \mid n} \sum_{m
\mid \frac{n}{2l}} \Big(l^2 + \frac{5}{2} +\frac{3}{2}(-1)^l\Big)m,
\leqno (i)
$$
$$
c_{\pi_{1}(\mathcal{B}_{1}),\pi_{1}(\mathcal{B}_{1})}(n)=\sum_{l
\mid n}
\Big(\frac{3}{2}+\frac{1}{2}(-1)^l\Big)\sigma_1(\frac{n}{l})-\sum_{2l
\mid
n}\Big(\frac{3}{2}+\frac{1}{2}(-1)^l\Big)\sigma_1(\frac{n}{2l}),
\leqno (ii)
$$
$$
c_{\pi_{1}(\mathcal{B}_{2}),\pi_{1}(\mathcal{B}_{1})}(n) =2\Big(
\sum_{2l \mid n} \sigma_1(\frac{n}{2l})-\sum_{4l \mid
n}\sigma_1(\frac{n}{4l})\Big).\leqno (iii)
$$
\end{theorem}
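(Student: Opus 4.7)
The plan is to combine Theorem~\ref{th1} (which counts subgroups without regard to conjugacy) with a detailed analysis of the conjugation action, using the classifying invariants established in \Cref{prop2,prop4,prop11,prop12,prop13}. The basic accounting identity
$$
s_{H,\pi_1(\mathcal{B}_1)}(n) \;=\; \sum_{[\Delta]}\,[\pi_1(\mathcal{B}_1):N(\Delta)],
$$
where the sum runs over conjugacy classes of index-$n$ subgroups isomorphic to $H$, reduces the problem to computing the normalizer index of each subgroup. Equivalently, one may apply Burnside's lemma to the conjugation action on the set of subgroups of each isomorphism type.

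For part $(i)$, every subgroup isomorphic to $\ZZ^3$ lies in the unique maximal abelian subgroup $A\leqslant\pi_1(\mathcal{B}_1)$ (\Cref{prop4}). Conjugation therefore induces an action of $\pi_1(\mathcal{B}_1)/A\cong \ZZ_2$ on the finite-index subgroups of $A\cong \ZZ^3$. First I would parametrize such subgroups in Hermite normal form, compatibly with the invariants $l(\Delta)$ and $\phi(\Delta)$, and then describe the nontrivial involution explicitly: it negates the appropriate coordinate directions dictated by the $\mathcal{B}_1$-action. Counting fixed points of this involution amongst index-$n$ sublattices, combined with standard Jordan-totient identities, gives the factor $l^{2}+\tfrac{5}{2}+\tfrac{3}{2}(-1)^{l}$ as the average orbit contribution, which when summed over the divisors $m\mid n/(2l)$ produces the claimed formula.

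For parts $(ii)$ and $(iii)$, non-abelian subgroups are fully classified by the $4$-tuple $\bigl(l(\Delta),\phi(\Delta),\rho(\Delta),\eps(\Delta)\bigr)$ (\Cref{prop12}), and $\eps(\Delta)$ distinguishes $\pi_1(\mathcal{B}_1)$-type subgroups from $\pi_1(\mathcal{B}_2)$-type ones (\Cref{prop13}). Since these invariants are constant on conjugacy classes, I would compute how conjugation by a general $g\in\pi_1(\mathcal{B}_1)$ transforms $\rho(\Delta)$: it is an affine shift whose translation part is a prescribed linear function of $\phi(g)$ and $\phi(\Delta)$. Enumerating the orbits of this affine action on admissible values of $\rho$ for fixed $(l,\phi)$, and summing over $(l,\phi)$, yields the sought counts. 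The orbit-size coefficient $\tfrac{3}{2}+\tfrac{1}{2}(-1)^{l}$ in $(ii)$ arises as the average of the two possible orbit sizes depending on the parity of $l$, while in $(iii)$ every $\pi_1(\mathcal{B}_2)$-type subgroup has a normalizer of the same index, producing the clean factor $2$.

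The main obstacle will be the bookkeeping of parities and divisibility conditions in the non-abelian setting: one must track carefully how the parity of $l$ and the $2$-adic content of $\phi(\Delta)$ interact with the centralizer of a candidate representative, and avoid double counting between the two Möbius-style sums $\sum_{2l\mid n}$ and $\sum_{4l\mid n}$. A secondary technical step is the simplification of the resulting expressions via the identity $\sum_{d\mid l}\mu(l/d)(2,d)d^{2}$ so that Jordan totient functions $\varphi_{2}(l)$ and $\varphi_{3}(l)$ emerge cleanly; verifying that these agree with the formulas obtained by direct orbit counting will serve as an internal consistency check, and together with Theorem~\ref{th1.2} (which sums the three quantities in Theorem~\ref{th1.3}) provides the final cross-validation.
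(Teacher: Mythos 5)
Your proposal follows essentially the same route as the paper: for the abelian case you reduce to the action of the index-two quotient on the finite-index sublattices of the maximal abelian subgroup $\langle a^2,b,c\rangle$ and count fixed points of the involution induced by conjugation by $a$, and for the non-abelian cases you fix $(l(\Delta),\phi(\Delta))$ and track how conjugation shifts $\rho(\Delta)$ (namely $\rho\mapsto-\rho$, $\rho\mapsto\rho\pm2$, $\rho\mapsto\rho$), so that the parity of $\rho$ is the only surviving invariant --- exactly the three lemmas of the paper. One point needs care in part (i): Burnside applied to the $\ZZ_2$-action gives, for each fixed pair $(l,\phi(\Delta))$, the class count $\tfrac12\bigl(l^2+\tfrac52+\tfrac32(-1)^l\bigr)$ (the number of fixed subgroups being $1$ or $4$ according to the parity of $l$), not the bracket itself as an \emph{average orbit contribution}; the paper's corresponding lemma carries this factor $\tfrac12$, and the check $c_{\ZZ^3,\pi_1(\mathcal{B}_1)}(2)=1$ (Table~3) confirms it, so the displayed formula (i) of the theorem is itself missing the $\tfrac12$ and your derivation, done honestly, will produce the halved count rather than the printed one. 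Also, in (iii) the claim that all $Cc$-type subgroups have normalizers of the same index is not literally true (the orbit of $\rho$ has size $l(\Delta)/2$, which varies with $l(\Delta)$); what actually produces the factor $2$ is that for each admissible pair $(l,\phi(\Delta))$ with $l$ even there are exactly two conjugacy classes, distinguished by the parity of $\rho$, which is what your orbit enumeration would show once made precise.
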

Despite \Cref{th1.3} covers \Cref{th1.2} we present both, since the
proof of \Cref{th1.2} can be also obtained from other
considerations, see \Cref{lulz2}.

The results towards the manifold $\mathcal{B}_2$ follows much the
same way. \Cref{th-Second-1}, \Cref{th-Second-1.2} and
\Cref{th-Second-1.3} are similar to \Cref{th1}, \Cref{th1.2} and
\Cref{th1.3} respectively.

\begin{theorem}\label{th-Second-1}
Every subgroup $\Delta$ of finite index $n$ in
$\pi_{1}(\mathcal{B}_{2})$ is isomorphic to either $\ZZ^3$, or
$\pi_1(\mathcal{B}_{2})$, or $\pi_1(\mathcal{B}_{1})$, and
$$
s_{\ZZ^3, \pi_1(\mathcal{B}_{2})}(n) =  \sum_{2l \mid n}
\sigma_1(l)l,\leqno (i)
$$
$$
s_{\pi_{1}(\mathcal{B}_{2}),\pi_{1}(\mathcal{B}_{2})}=\left\{
\begin{aligned}
\sum_{4k \mid n}2k\big(\sigma_1(\frac{n}{2k})-\sigma_1(\frac{n}{4k})\big) \quad\text{if } n \,\,\text {is even} \\
\sum_{k \mid n} \sigma_1(\frac{n}{k})k \quad \text{if } n \,\,\text {is odd} \\
\end{aligned} \right. \leqno (ii)
$$
$$
s_{\pi_{1}(\mathcal{B}_{1}),\pi_{1}(\mathcal{B}_{2})}=\sum_{2k \mid
n} 2k\sigma_1(\frac{n}{2k})-\sum_{4k \mid
n}2k\sigma_1(\frac{n}{4k}). \leqno (iii)
$$
\end{theorem}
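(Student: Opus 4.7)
The proof plan parallels that of \Cref{th1}. I would begin by invoking \Cref{prop2-2}, which fixes a canonical form for elements of $\pi_1(\mathcal{B}_2)$ and supplies the epimorphism $\psi:\pi_1(\mathcal{B}_2)\to\ZZ^2$, together with \Cref{prop2-3} defining the invariant $l(\Delta)$ for a finite-index $\Delta\leqslant\pi_1(\mathcal{B}_2)$. The refined invariants $\rho(\Delta)$ and $\eps(\Delta)$ introduced after \Cref{prop2-11} then complete the data attached to a non-abelian $\Delta$. Using the $\mathcal{B}_2$-analogs of \Cref{prop4}, \Cref{prop11}, \Cref{prop12} and \Cref{prop13}, one obtains: $\Delta$ is abelian iff $\psi(\Delta)$ lies in the index-$2$ ``orientation-preserving'' sublattice of $\ZZ^2$; the $4$-tuple $(l,\psi,\rho,\eps)$ determines a non-abelian $\Delta$ uniquely; and the value of $\eps(\Delta)$ decides whether $\Delta\cong\pi_1(\mathcal{B}_1)$ or $\Delta\cong\pi_1(\mathcal{B}_2)$. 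Granting these four structural propositions, the three formulas reduce to counting sums over admissible invariant tuples.

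For $(i)$, an abelian subgroup of index $n$ is encoded by a sublattice of $\ZZ^2$ of index $2l$ (giving $\sigma_1(l)$ choices once the forced factor of $2$ is stripped off) together with splitting data inside the kernel of $\psi$, contributing the factor $l$; summing over $2l\mid n$ yields $\sum_{2l\mid n}\sigma_1(l)\,l$. That this formula coincides with its $\mathcal{B}_1$-counterpart is not a coincidence: in both crystallographic groups the maximal abelian subgroup is $\ZZ^3$ of index $2$, and the abelian subgroups of finite index are parametrized identically. For $(iii)$, those non-abelian $\Delta$ whose $\eps$-value signals the $\pi_1(\mathcal{B}_1)$-type are enumerated by a double sum whose principal term $\sum_{2k\mid n}2k\,\sigma_1(n/(2k))$ overcounts subgroups with degenerate $\eps$; inclusion-exclusion removes these via subtraction of $\sum_{4k\mid n}2k\,\sigma_1(n/(4k))$.

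For $(ii)$, the split into $n$ even versus $n$ odd is the characteristic new feature compared with $\mathcal{B}_1$. When $n$ is odd, $2$-torsion obstructions vanish in every relevant quotient, so every non-abelian subgroup of odd index is automatically of $Cc$-type, and a direct count over the invariants yields $\sum_{k\mid n}\sigma_1(n/k)\,k$. When $n$ is even, one subtracts the $\pi_1(\mathcal{B}_1)$-contribution from the total non-abelian count; a short manipulation of sums then presents the result as the inclusion-exclusion expression $\sum_{4k\mid n}2k\bigl(\sigma_1(n/(2k))-\sigma_1(n/(4k))\bigr)$.

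The principal obstacle I foresee is the $\mathcal{B}_2$-analog of \Cref{prop12}: verifying that the tuple $(l,\psi,\rho,\eps)$ really separates non-abelian subgroups, and that $\eps$ cleanly distinguishes the $Pc$-type from the $Cc$-type. This requires analyzing how the glide-reflection generator of $Cc$ conjugates elements of its translation lattice, and in particular how the action of $\Delta$ on its own translation sublattice can switch between a ``primitive'' and ``centered'' configuration as one passes to sublattices of index $l$. The centering condition is exactly what distinguishes $Cc$ from $Pc$ abstractly, and encoding it intrinsically via $\eps(\Delta)$ is the delicate step; once this is established, the three counting formulas drop out by summations of the kind already used in the $\mathcal{B}_1$ case.
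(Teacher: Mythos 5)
Your plan coincides with the paper's strategy (canonical form \Cref{prop2-2}, the invariants $l(\Delta),\psi(\Delta),\rho(\Delta),\eps(\Delta)$, and the analogues \Cref{prop2-4}, \Cref{prop2-12}, \Cref{prop2-13}), but it omits the one ingredient that actually produces formulas (ii) and (iii): the twisted congruence satisfied by $\eps(\Delta)$ together with the parity relation of \Cref{prop2-8.5}. Unlike the $\mathcal{B}_1$ case, where $2\eps(\Delta)\equiv 0 \pmod{l(\Delta)}$, here $\nu(2p,2q)=-pz_v-qz_u$ (\Cref{prop2-9}, \Cref{prop2-11}) forces $2\eps(\Delta)\equiv -z_u \pmod{l(\Delta)}$, where $\overline{u}=(x_u,z_u)$ is the even generator of $\psi(\Delta)$ chosen as in \Cref{prop2-8}, and \Cref{prop2-13} decides the isomorphism type by which lift of this congruence modulo $2l(\Delta)$ holds. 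Consequently, for a fixed admissible triple $(l,\psi,\rho)$ a $Pc$-type value of $\eps$ exists iff $z_u$ is even and a $Cc$-type value exists iff $z_u\equiv l\pmod 2$, in each case uniquely; feeding in \Cref{prop2-8.5}, $z_u\equiv[\ZZ^2:\psi(\Delta)]=n/l\pmod 2$, is exactly what yields the odd/even dichotomy in (ii) (for $n$ even one needs $l=2k$ and $n/l$ both even, i.e.\ $4k\mid n$) and the count in (iii). Your proposal defers precisely this point (your ``delicate step'') and never resolves it; moreover, for $n$ even your route ``subtract the $\pi_1(\mathcal{B}_1)$-count from the total non-abelian count'' is circular, since the total non-abelian count is not $\sum_{l\mid n}l\bigl(\sigma_1(\frac{n}{l})-\sigma_1(\frac{n}{2l})\bigr)$: the number of admissible $\eps$ per triple is $1$, $2$ or $0$ according to the parities of $l$ and $n/l$, so computing the total requires the very same analysis.

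Two of the mechanisms you do describe are misattributed. In (iii) the subtracted term $\sum_{4k\mid n}2k\,\sigma_1(\frac{n}{4k})$ does not remove ``subgroups with degenerate $\eps$''; it is the same subtraction $\sigma_1(\frac{n}{l})-\sigma_1(\frac{n}{2l})$ that appears in all three counts, excluding those $\psi(\Delta)$ contained in $\{(2x,z)\mid x,z\in\ZZ\}$, which would make $\Delta$ abelian (condition (ii) in the definition of an $n$-essential 4-plet). And in (i) the bookkeeping ``index-$2l$ sublattice worth $\sigma_1(l)$ choices times a factor $l$ of splitting data'' is not literally correct: a sublattice of index $2l$ carries $(\frac{n}{2l})^2$ choices of splitting data, and the stated formula $\sum_{2l\mid n}\sigma_1(l)\,l$ is just the known count of index-$\frac{n}{2}$ subgroups of $\langle\alpha^2,\beta,\gamma\rangle\cong\ZZ^3$, as in \Cref{prop2-5}. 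These defects are repairable, but as written the counting core of the theorem — where $\mathcal{B}_2$ genuinely differs from $\mathcal{B}_1$ — is not established.
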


\begin{theorem}\label{th-Second-1.2}
The total number of non-equivalent $n$-fold coverings over
$\mathcal{B}_2$ is
\begin{displaymath}
 c_{\pi_{1}(\mathcal{B}_{2})}= \frac{1}{n}\sum_{\substack{l | n \\ l m = n}}(\varphi_{3}(l)\sum_{2k \mid m} \sigma_1(k)k +   s_{\pi_{1}(\mathcal{B}_{1}),\pi_{1}(\mathcal{B}_{2})}(m)\sum_{\substack{ d | l}}\,\mu \left( \frac{l}{d}\right) (2,d) d^{2}  +\varphi_{2}(l)s_{\pi_{1}(\mathcal{B}_{2}),\pi_{1}(\mathcal{B}_{2})} (m))\,,
 \end{displaymath}
 where  $\mu (t)$ is the M\"{o}bius function, $\varphi_{3} (l)$ and  $\varphi_{2} (l)$ are Jordan totient functions,$(2,d)$ is a greater common divisor of numbers  $2$ and $d$.
\end{theorem}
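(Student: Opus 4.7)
My plan is to proceed in direct analogy with \Cref{th1.2}, starting from the orbit-counting identity
$$c_G(n) \;=\; \frac{1}{n}\sum_{\substack{\Delta\le G\\ [G:\Delta]=n}}[N_G(\Delta):\Delta],\qquad G=\pi_1(\mathcal{B}_2),$$
which holds because a conjugacy class of index-$n$ subgroups has size $[G:N_G(\Delta)]=n/[N_G(\Delta):\Delta]$, so the weighted sum on the right is $n\cdot c_G(n)$. I would then partition this sum by the isomorphism type of $\Delta$; by \Cref{th-Second-1}, only the three types $\ZZ^3$, $\pi_1(\mathcal{B}_1)$, $\pi_1(\mathcal{B}_2)$ occur, and I would match each of the three partial sums with the corresponding summand in the statement.

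For each isomorphism class $H$ the target identity takes the form
$$\sum_{\substack{\Delta\cong H\\ [G:\Delta]=n}}[N_G(\Delta):\Delta]\;=\;\sum_{lm=n} w_H(l)\,s_{H,G}(m),$$
with weights $w_{\ZZ^3}(l)=\varphi_3(l)$, $w_{\pi_1(\mathcal{B}_1)}(l)=\sum_{d\mid l}\mu(l/d)(2,d)d^{2}$, and $w_{\pi_1(\mathcal{B}_2)}(l)=\varphi_2(l)$. For each fixed index-$m$ subgroup $\Delta'\cong H$, the additional choice of an index-$l$ sublattice in the corresponding abelian quotient plus an extension class realising the same $H$-structure would parametrise the finer subgroups $\Delta\le\Delta'$. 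By the analogues of \Cref{prop4} and \Cref{prop12} for $\pi_1(\mathcal{B}_2)$, this choice is captured by the invariants $(l(\Delta),\psi(\Delta),\rho(\Delta),\eps(\Delta))$, with the isomorphism type of $\Delta$ determined by $\eps(\Delta)$ via the analogue of \Cref{prop13}.

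The Jordan totient weight $\varphi_3(l)$ arises as the number of primitive sublattices of the $\ZZ^3$ translation lattice of index $l$, and $\varphi_2(l)$ as the analogous count in the rank-$2$ image $\psi(G)=\ZZ^2$; both are standard. The weight $w_{\pi_1(\mathcal{B}_1)}(l)=\sum_{d\mid l}\mu(l/d)(2,d)d^{2}$ is more delicate: it comes from a Möbius inversion on the divisor lattice that must keep track of the $\ZZ_2$-torsion in $H_1(\mathcal{B}_1)=\ZZ_2\times\ZZ^2$. Depending on the parity of the sublattice index $d$, the number of gluing data that complete the sublattice to a $\pi_1(\mathcal{B}_1)$-type subgroup acquires an extra factor of $2$, which is exactly the contribution recorded by $(2,d)$.

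The main obstacle is this middle case. One has to identify, for each $\pi_1(\mathcal{B}_1)$-subgroup $\Delta'\le G$ of index $m$, precisely which overlattices $\Delta\le\Delta'$ of index $l$ are again isomorphic to $\pi_1(\mathcal{B}_1)$, compute $[N_G(\Delta):\Delta]$ for each via the quadruple of invariants, and then Möbius-invert the resulting sublattice sum to obtain the closed form $\sum_{d\mid l}\mu(l/d)(2,d)d^{2}$. Once the three weights are in place, summing the three strata over $l\mid n$ and dividing by $n$ yields the displayed formula.
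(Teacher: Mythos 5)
There is a genuine gap. Your opening identity $n\,c_G(n)=\sum_{[G:\Delta]=n}[N_G(\Delta):\Delta]$ is fine, but the stratified identity you then propose to prove for each type $H$,
$$
\sum_{\substack{\Delta\cong H,\ [G:\Delta]=n}}[N_G(\Delta):\Delta]\;=\;\sum_{lm=n}w_H(l)\,s_{H,G}(m),
$$
is false, because in the target formula the subgroups counted by $s_{H,G}(m)$ are \emph{intermediate} subgroups $K$ of index $m$ (the $K$ of Mednykh's theorem), not the index-$n$ subgroups $\Delta$ whose normalizer indices you are summing, and the isomorphism type of $K$ need not match that of $\Delta$. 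Concretely, take $n=2$, $G=\pi_1(\mathcal{B}_2)$. For $H=\ZZ^3$ the unique abelian index-$2$ subgroup $\langle\alpha^2,\beta,\gamma\rangle$ is normal, so your left-hand side equals $2$, while the right-hand side is $\varphi_3(1)s_{\ZZ^3,G}(2)+\varphi_3(2)s_{\ZZ^3,G}(1)=1\cdot 1+7\cdot 0=1$; similarly the $Pc$-stratum gives $4\neq 2$ and the $Cc$-stratum gives $0\neq 3$. Only the totals agree ($2+4+0=1+2+3=6=2\,c_G(2)$), so matching summand by summand according to the type of the index-$n$ subgroup cannot work.

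The ingredient you are missing is exactly the step the paper takes over from Mednykh's theorem: $c_\Gamma(n)=\frac1n\sum_{lm=n}\sum_{K<_m\Gamma}|Epi(K,\ZZ_l)|$, which converts the sum of $[N_\Gamma(\Delta):\Delta]$ into a count of pairs $(K,\varepsilon)$ with $[\Gamma:K]=m$ and $\varepsilon$ an epimorphism of $K$ onto $\ZZ_l$ (each coset $g\Delta$ in $N_\Gamma(\Delta)/\Delta$ corresponds to a cyclic extension $K=\langle\Delta,g\rangle$ of $\Delta$). After this conversion the decomposition into three summands is by the isomorphism type of $K$, and the proof of \Cref{th-Second-1.2} is pure substitution: \Cref{th-Second-1} supplies $s_{H,\pi_1(\mathcal{B}_2)}(m)$ for $H\in\{\ZZ^3,Pc,Cc\}$, and \Cref{epi} supplies $|Epi(\ZZ^3,\ZZ_l)|=\varphi_3(l)$, $|Epi(Pc,\ZZ_l)|=\sum_{d\mid l}\mu(l/d)(2,d)d^2$, $|Epi(Cc,\ZZ_l)|=\varphi_2(l)$, computed through the first homology groups. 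So the factor $(2,d)$ does reflect the $\ZZ_2$-torsion of $H_1(Pc)$, but as an epimorphism count onto $\ZZ_l$, not as a count of ``primitive sublattices'' or of gluing data completing a sublattice inside a fixed overgroup $\Delta'$; likewise $\varphi_3(l)$ enters as $|Epi(\ZZ^3,\ZZ_l)|$, not as a sublattice count. Your plan of parametrising subgroups $\Delta\le\Delta'$ of index $l$ ``realising the same $H$-structure'' both overcounts (a given $\Delta$ lies in many $\Delta'$) and attaches the weight to the wrong object, so the middle stratum in particular cannot be repaired by a M\"obius inversion of the kind you sketch.
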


\begin{theorem}\label{th-Second-1.3}
Let $\mathcal{N} \to \mathcal{B}_{2}$ be an $n$-fold covering over
$\mathcal{B}_{2}$. If $n$ is odd then $\mathcal{N}$ is homeomorphic
to $\mathcal{B}_{2}$. If $n$ is even then $\mathcal{N}$ is
homeomorphic to $\mathcal{G}_{1}$ or $\mathcal{B}_{1}$ or
$\mathcal{B}_{2}$. The corresponding numbers of nonequivalent
coverings are given by the following formulas:
$$
c_{\ZZ^3,\pi_1(\mathcal{B}_2)}(n)= \frac{1}{2}\sum_{2l \mid n}
\sum_{m \mid \frac{n}{2l}} \Big(l^2 +
\frac{3}{2}+\frac{1}{2}(-1)^{l}+(-1)^{\frac{n}{2lm}} +
(-1)^{l+\frac{n}{2lm}}\Big)m, \leqno (i)
$$
$$
c_{\pi_1(\mathcal{B}_2),\pi_1(\mathcal{B}_2)}(n)=\left\{
\begin{aligned}
\sum_{4k \mid n}(\sigma_1(\frac{n}{2k})-\sigma_1(\frac{n}{4k})) \quad\text{if } n \,\,\text {is even} \\
\sum_{l \mid n} \sigma_1(\frac{n}{l}) \quad \text{if } n \,\,\text {is odd} \\
\end{aligned} \right. \leqno (ii)
$$
$$
c_{\pi_1(\mathcal{B}_1),\pi_1(\mathcal{B}_2)}(n)= 2\Big(\sum_{2k
\mid n}\sigma_1(\frac{n}{2k}) - \sum_{4k \mid
n}\sigma_1(\frac{n}{4k})\Big). \leqno (iii)
$$
\end{theorem}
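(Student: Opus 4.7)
The plan is to imitate the proof of \Cref{th1.3}, carrying each step over to the $\mathcal{B}_2$ setting. The combinatorial input is \Cref{th-Second-1}: since the fundamental group of an $n$-fold cover of $\mathcal{B}_2$ is isomorphic to an index-$n$ subgroup of $\pi_1(\mathcal{B}_2)$, the three isomorphism classes listed there translate to the three flat manifolds $\mathcal{G}_1$, $\mathcal{B}_1$, $\mathcal{B}_2$. The parity statement is immediate, since the sums defining $s_{\ZZ^3,\pi_1(\mathcal{B}_2)}$ and $s_{\pi_1(\mathcal{B}_1),\pi_1(\mathcal{B}_2)}$ in \Cref{th-Second-1} range over $2l\mid n$ and $4l\mid n$ respectively, and so are empty for odd $n$.

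To pass from subgroups to conjugacy classes, I use the standard observation that the conjugacy class of a subgroup $\Delta$ of index $n$ has size $[\pi_1(\mathcal{B}_2):N(\Delta)]$, a divisor of $n$, so conjugation reduces to a finite action to which Burnside's lemma applies.

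For part (i), every abelian subgroup of finite index sits inside a unique maximal abelian subgroup $A\cong\ZZ^3$ (the $\mathcal{B}_2$-analog of \Cref{prop4}), and $\pi_1(\mathcal{B}_2)/A\cong\ZZ/2$ acts on $A$ by an involution $\tau$ coming from the glide generating the quotient. Parameterising the index-$n$ sublattices of $\pi_1(\mathcal{B}_2)$ lying in $A$ by the invariants $(l(\Delta),\psi(\Delta))$ from \Cref{prop2-3} together with a basis of $\ker\psi$, one writes $n=2lm$ with $m$ the further index inside a rank-$2$ sublattice. Burnside then gives the overall prefactor $\tfrac{1}{2}$: the $l^2$ term is the identity's contribution (which must match \Cref{th-Second-1}(i)), while the combination $\tfrac{3}{2}+\tfrac{1}{2}(-1)^{l}+(-1)^{n/(2lm)}+(-1)^{l+n/(2lm)}$ is the number of $\tau$-invariant sublattices of the prescribed shape.

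For parts (ii) and (iii), by the $\mathcal{B}_2$-analogs of \Cref{prop12} and \Cref{prop13}, a non-abelian finite-index subgroup $\Delta$ is determined by its quadruple $(l,\psi,\rho,\eps)$ and its isomorphism type is read off from $\eps$. Since $l$ and $\eps$ are manifestly conjugation-invariant, conjugation only permutes $(\psi,\rho)$; I would compute this permutation explicitly and count orbits by Burnside within each fixed value of $\eps$. The hard part is the bookkeeping in part (i): identifying the $\tau$-invariant rank-$2$ sublattices of $A$ (which depends on the eigenstructure of $\tau$), and then checking, for each such invariant sublattice, the parity conditions under which a given sub-index-$m$ sublattice is again $\tau$-invariant. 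These two nested parity constraints produce the four $\pm 1$ terms in the formula, whereas the formulas for (ii) and (iii) are comparatively transparent because most of the invariant data is already forced by the value of $\eps$.
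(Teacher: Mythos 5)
Your skeleton is the same as the paper's: \Cref{th-Second-1} supplies the three isomorphism types and the parity statement, the abelian case is handled by the two-element action of $\alpha$ on subgroups of the maximal abelian subgroup $\langle\alpha^2,\beta,\gamma\rangle$ plus a Burnside count, and the non-abelian cases go through the 4-plets of \Cref{prop2-12} and \Cref{prop2-13}. The gap is that you stop exactly where the content of the theorem lies: the conjugation action on the invariants is never computed, and it is precisely this computation that makes the $\mathcal{B}_2$ answer differ in shape from the $\mathcal{B}_1$ answer you propose to imitate. In $\pi_1(\mathcal{B}_1)$ the element $c$ is central, so $\rho(\Delta^c)=\rho(\Delta)$ and the parity of $\rho$ survives as a conjugacy invariant, producing the factor $\bigl(\tfrac32+\tfrac12(-1)^{l}\bigr)$ in \Cref{th1.3}(ii); in $\pi_1(\mathcal{B}_2)$ a direct computation with \eqref{multlaw2} gives $\gamma\alpha\gamma^{-1}=\alpha\beta^{-1}$, so conjugation by $\gamma$ shifts $\rho(\Delta)$ by one, all values of $\rho$ collapse into a single class for each admissible pair $(l(\Delta),\psi(\Delta))$, and no such parity factor appears in (ii) or (iii). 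An uncritical imitation of the \Cref{th1.3} computation would therefore give wrong formulas here. Likewise the even/odd dichotomy in (ii) is not ``forced by $\eps$'' in any self-evident way: it comes from solvability of $2\eps(\Delta)\equiv -z_u+l(\Delta)\pmod{2l(\Delta)}$ combined with $z_u\equiv n/l(\Delta)\pmod 2$ (\Cref{prop2-8.5}), neither of which your outline derives. (A small slip: $\psi$ is itself conjugation-invariant, since the quotient $\ZZ^2$ is abelian; only $\rho$ moves.)

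For part (i) your plan agrees with the paper's (prefactor $\tfrac12$, identity term $l^2$, fixed-point count depending on two parities), but your description of the fixed-point count is off. The data $(l(\Delta),\psi(\Delta))$ is automatically preserved by conjugation by $\alpha$, so there is no separate question of ``$\tau$-invariant sublattices''; the entire condition falls on the pair of residues, namely $y_v\equiv -y_v$ and $y_u\equiv z_u-y_u \pmod{l(\Delta)}$, whose number of solutions ($1$, $0$ or $4$) is governed by the parities of $l(\Delta)$ and of $z_u$ (which equals $n/(2lm)$ in the paper's parameterization). The asymmetric condition on $y_u$, caused by $\alpha\gamma\alpha^{-1}=\beta\gamma$, is exactly what distinguishes this count from the $\mathcal{B}_1$ one and produces the terms $(-1)^{n/(2lm)}$ and $(-1)^{l+n/(2lm)}$. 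Until these computations are actually carried out, the displayed formulas (i)--(iii) are not established by the proposal.
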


\section{Preliminaries}

Consider manifold, referred as $\mathcal{B}_{1}$ and
$\mathcal{B}_{2}$ in \cite{Wolf}, also as $+a_{1}$ and $-a_{1}$ in
\cite{Conway}.

 Let us remark that $\mathcal{B}_{1}$ can be considered as a Seifert fiber space.  $\mathcal{B}_{1}$ is the  trivial $S^{1}$-bundle  over Klein bottle $\mathcal{K}$,
 so  $\mathcal{B}_{1}=\mathcal{K}\times S^{1}$.
 Its fundamental group $\pi_{1}(\mathcal{B}_{1})$ is
 \begin{equation}\label{fundplus1}
 \pi_{1}(\mathcal{B}_{1})=\pi_{1}(\mathcal{K})\times\pi_{1}(S^{1})=\Lambda \times \ZZ,
 \end{equation}
  where $\Lambda$ is a fundamental group of Klein bottle.


From \cite{Conway} the fundamental group $\pi_{1}(\mathcal{B}_{1})$,
can be represented in the form
\begin{equation}\label{fundplus2}
 \pi_{1}(\mathcal{B}_{1})=\langle W, X, Z: X^{-1}ZX = W^{-1}ZW=Z^{-1},
 W^{-1}Z^{-1}WZ=1
 \rangle .
 \end{equation}
It is also a crystallographic group $Pc$.

The fundamental group $\pi_{1}(\mathcal{B}_{2})$, can be represented
in the form
\begin{equation}\label{fundminus2}
 \pi_{1}(\mathcal{B}_{2})=\langle W', X', Z': X'^{-1}Z'X' = W'^{-1}Z'W'=Z'^{-1},  W'^{-1}Z'^{-1}W'Z'=Z'
 \rangle .
 \end{equation}
It is also a crystallographic group $Cc$.


For our convenience we replace $X=a$, $W=ca$, $Z=b$ and obtain
\begin{equation}\label{fundplus3}
 \pi_{1}(\mathcal{B}_{1})=\langle a,b,c: cac^{-1}a^{-1}=cbc^{-1}b^{-1}=1,  aba^{-1}b=1 \rangle.
 \end{equation}

 Substituting $X'=\alpha$, $Z'=\beta^{-1}$ and $W'=\alpha\gamma$, we get

\begin{equation}\label{fundminus3}
 \pi_{1}(\mathcal{B}_{2})=\langle \alpha, \beta, \gamma: \gamma\beta\gamma^{-1}=\alpha\gamma\alpha^{-1}\gamma^{-1}=\beta,  \alpha\beta\alpha^{-1}=\beta^{-1} \rangle.
 \end{equation}

A 3-torus is denoted by $\mathcal{G}_1$ in our paper. Note that the
fundamental group $\pi_{1}(\mathcal{G}_1)$ is represented in the
form

\begin{equation}\label{fundc1}
\pi_{1}(\mathcal{G}_1)=\ZZ^3
\end{equation}
It is also a crystallographic group $P1$.

\bigskip

\section{On the coverings of $\mathcal{B}_{1}$}\label{half1}
\subsection{The structure of the group $\pi_{1}(\mathcal{B}_{1})$ }

The following proposition provides the canonical form of an element
in $\pi_{1}(\mathcal{B}_{1})$.

\begin{proposition}\label{prop2}
\begin{itemize}
\item[(i)] Each element of $\pi_{1}(\mathcal{B}_{1})$ can be represented in the canonical form $a^xb^yc^z$
for some integer $x,y,z$.
\item[(ii)] The product of two canonical forms is given by the
formula
\begin{equation}\label{multlaw1}
a^xb^yc^z \cdot
a^{x'}b^{y'}c^{z'}=a^{x+x'}b^{(-1)^{x'}y+y'}c^{z+z'}.
\end{equation}
\item[(iii)] The canonical epimorphism $\phi: \pi_{1}(\mathcal{B}_{1}) \to
\pi_{1}(\mathcal{B}_{1})/\langle b\rangle \cong \ZZ^2$, given by the
formula $a^xb^yc^z \to (x,z)$ is well-defined.
\item[(iv)] The representation in the canonical form $a^xb^yc^z$ for
each element is unique.
\end{itemize}
\end{proposition}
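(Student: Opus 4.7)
My plan is to prove the four parts in the order (i), (ii), (iii), (iv), with (iv) being the main obstacle.

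\textbf{For (i),} I would start from the three relations rewritten as oriented rewriting rules: $ca\to ac$, $cb\to bc$ (so every $c$ moves freely past $a$ and $b$), and, derived from $aba^{-1}b=1$, the rule $ab\to b^{-1}a$. A short computation gives the companion rule $ba\to ab^{-1}$ (multiply $ab=b^{-1}a$ by $a^{-1}$ on left and $a$ on right). Given any word in $a,b,c,a^{-1},b^{-1},c^{-1}$, I would first push every occurrence of $c^{\pm 1}$ to the right using centrality of $c$, then push every occurrence of $a^{\pm 1}$ to the left past all $b^{\pm 1}$ using $ba^{\pm 1}\to a^{\pm 1} b^{\mp 1}$. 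Each application strictly decreases the number of inversions of the pattern (letters out of the prescribed order), so the process terminates in a word of the form $a^x b^y c^z$.

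\textbf{For (ii),} I would first prove by induction on $|x'|$ the auxiliary identity $a^{x'} b = b^{(-1)^{x'}} a^{x'}$ (base case is the relation itself, and $a^{-1}b=b^{-1}a^{-1}$ follows by inverting $ab=b^{-1}a$). Iterating on $y$ this yields $b^y a^{x'} = a^{x'} b^{(-1)^{x'} y}$. Then
\begin{equation*}
a^x b^y c^z \cdot a^{x'} b^{y'} c^{z'} = a^x b^y a^{x'} b^{y'} c^{z+z'} = a^{x+x'} b^{(-1)^{x'} y + y'} c^{z+z'},
\end{equation*}
where centrality of $c$ is used to move $c^z$ past $a^{x'} b^{y'}$.

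\textbf{For (iii),} I would first observe that $\langle b\rangle$ is a normal subgroup, since $aba^{-1}=b^{-1}\in\langle b\rangle$ and $cbc^{-1}=b\in\langle b\rangle$. Adding the relation $b=1$ to the presentation kills the relation $aba^{-1}b$ and leaves the two commutation relations $[c,a]=[c,b]=1$ (the latter becoming vacuous), so the quotient is the free abelian group $\langle a,c\mid [a,c]=1\rangle\cong\ZZ^2$. The assignment $a^xb^yc^z\mapsto(x,z)$ is then just the composition of the quotient map with this identification, hence well-defined on the group.

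\textbf{For (iv),} the obstacle is that the abelianization kills $b$ only to order $2$ (from $aba^{-1}b=1$ one gets $2b=0$ in the abelianization), so one cannot conclude uniqueness of $y$ from (iii) alone. I would overcome this by exhibiting a concrete homomorphism with $b$ of infinite order, namely the action of $\pi_1(\mathcal{B}_1)$ on $\R^3$ by $a\colon(x,y,z)\mapsto(x+1,-y,z)$, $b\colon(x,y,z)\mapsto(x,y+1,z)$, $c\colon(x,y,z)\mapsto(x,y,z+1)$. A direct check confirms the three defining relations. Applying $a^xb^yc^z$ to the origin gives $(x,(-1)^xy,z)$, so the triple $(x,y,z)$ is recovered from the image. (Alternatively, one may invoke the splitting $\pi_1(\mathcal{B}_1)=\Lambda\times\ZZ$ from~\eqref{fundplus1} together with the well-known canonical form in $\Lambda$.) This shows that equal canonical forms force $x=x'$, $y=y'$, $z=z'$, completing the proof.
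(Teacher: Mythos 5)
Your proof is correct, and parts (ii) and (iii) run exactly as in the paper (direct verification of the multiplication law; normality of $\langle b\rangle$ and the quotient $\cong\ZZ^2$). The genuine difference is in (i) and, more importantly, (iv). For (i) the paper simply deduces existence of the canonical form from the multiplication law (ii) by induction on word length, whereas you give an explicit rewriting/termination argument; both are fine, yours is a bit more self-contained, the paper's a bit shorter. For (iv) the paper argues: $x$ and $z$ are pinned down by (iii), and then $y$ is forced, ``since $b$ would be an element of finite order in $Pc$ otherwise'' --- i.e.\ it quietly relies on the known fact that $b$ has infinite order in the crystallographic group $Pc$ (equivalently, on torsion-freeness or on the splitting $\pi_1(\mathcal{B}_1)=\Lambda\times\ZZ$). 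You instead exhibit the standard affine model $a\colon(x,y,z)\mapsto(x+1,-y,z)$, $b\colon(x,y,z)\mapsto(x,y+1,z)$, $c\colon(x,y,z)\mapsto(x,y,z+1)$ on $\mathbb{R}^3$, check the relations, and read off all three exponents from the orbit of the origin, $(x,(-1)^xy,z)$. This is a genuinely different route: it makes the uniqueness argument self-contained (in particular it \emph{proves} that $b$ has infinite order rather than assuming it), at the modest cost of verifying the defining relations for the affine maps; the paper's route is shorter but leans on external knowledge of $Pc$. Your parenthetical alternative via $\pi_1(\mathcal{B}_1)=\Lambda\times\ZZ$ is essentially the paper's implicit justification made explicit.
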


\begin{proof} The part (i) follows from the part (ii),  part (ii) can be verified
directly. The part (iii) is equivalent the fact that the subgroup
$\langle b\rangle$ is a normal subgroup of
$\pi_{1}(\mathcal{B}_{1})$, which fact immediately follows from the
representation \ref{fundplus3} of $\pi_1(\mathcal{B}_1)$ . For the
proof of the part (iv) consider arbitrary element
$\pi_{1}(\mathcal{B}_{1})$ and its an arbitrary representation of
this element in the canonical form. The values of $x$ and $z$ are
uniquely defined by (iii). Thus $y$ is also uniquely defined, since
$b$ is an element of finite order in $Pc$ otherwise.
\end{proof}

\begin{lemma}\label{prop3}
Let  $\Delta$ be a subgroup of index $n$ in $\pi_1(\mathcal{B}_1)$.
By $l(\Delta)$ denote the minimal positive integer, such that
$b^{l(\Delta)} \in \Delta$. Such an $l(\Delta)$ exists, and satisfy
the relation $\l(\Delta)\cdot[\pi_1(\mathcal{B}_1) :
\phi(\Delta)]=n$.
\end{lemma}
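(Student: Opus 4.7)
The plan is to prove existence of $l(\Delta)$ by a pigeonhole argument, then deduce the index formula by inserting the intermediate subgroup $\Delta \cdot \langle b\rangle$ and applying standard index multiplicativity.

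First I would establish \emph{existence}. Since $[\pi_1(\mathcal{B}_1):\Delta]=n$ is finite, among the $n+1$ elements $1,b,b^2,\ldots,b^n$ two must lie in the same right coset of $\Delta$, say $b^i\Delta=b^j\Delta$ with $0\le i<j\le n$. Then $b^{j-i}\in\Delta$, so the set of positive integers $k$ with $b^k\in\Delta$ is nonempty, and $l(\Delta)$ is well-defined as its minimum. Moreover, by \Cref{prop2}(iv) the element $b$ has infinite order, so $\langle b\rangle\cong\ZZ$, and the intersection $\langle b\rangle\cap\Delta$ is a nontrivial subgroup of $\ZZ$ whose smallest positive element is $l(\Delta)$; hence $\langle b\rangle\cap\Delta=\langle b^{l(\Delta)}\rangle$.

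Next I would set up the \emph{index calculation}. By \Cref{prop2}(iii) the subgroup $\langle b\rangle$ is normal in $\pi_1(\mathcal{B}_1)$, so $H:=\Delta\cdot\langle b\rangle$ is a subgroup and contains $\Delta$. The tower $\Delta\le H\le\pi_1(\mathcal{B}_1)$ gives
$$
n=[\pi_1(\mathcal{B}_1):\Delta]=[\pi_1(\mathcal{B}_1):H]\cdot[H:\Delta].
$$
Since $\langle b\rangle=\ker\phi$, we have $\phi(H)=\phi(\Delta)$ and $H=\phi^{-1}(\phi(\Delta))$, whence $[\pi_1(\mathcal{B}_1):H]=[\ZZ^2:\phi(\Delta)]$. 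For the second factor, the second isomorphism theorem (applicable because $\langle b\rangle$ is normal in the ambient group, hence in $H$) gives
$$
H/\Delta=\Delta\langle b\rangle/\Delta\cong\langle b\rangle/(\langle b\rangle\cap\Delta)=\langle b\rangle/\langle b^{l(\Delta)}\rangle,
$$
whose order is $l(\Delta)$. Substituting yields $n=[\ZZ^2:\phi(\Delta)]\cdot l(\Delta)$, which is the claim.

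Neither step requires much machinery, so I do not anticipate a serious obstacle; the only point that needs care is ensuring the second isomorphism theorem applies, i.e.\ that $\langle b\rangle$ normalizes $\Delta$ inside $H$, which is free because $\langle b\rangle$ is normal in all of $\pi_1(\mathcal{B}_1)$. The use of the canonical form from \Cref{prop2} is essential precisely at the point where we need that $b$ has infinite order, so that $\langle b\rangle\cap\Delta$ is of the form $\langle b^{l(\Delta)}\rangle$ with index exactly $l(\Delta)$ in $\langle b\rangle$.
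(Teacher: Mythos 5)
Your proof is correct, and it reaches the index formula by a somewhat different mechanism than the paper. The paper argues existence the same way (if no power of $b$ lay in $\Delta$, the powers of $b$ would occupy infinitely many cosets), but for the count it directly exhibits a complete system of coset representatives: lifts $g_1,\dots,g_k$ of coset representatives of $\phi(\Delta)$ in $\ZZ^2$, multiplied by $b^j$ for $0\le j\le l(\Delta)-1$. You instead interpose the subgroup $H=\Delta\langle b\rangle=\phi^{-1}(\phi(\Delta))$ and use multiplicativity of the index in the tower $\Delta\le H\le\pi_1(\mathcal{B}_1)$, identifying $[\pi_1(\mathcal{B}_1):H]=[\ZZ^2:\phi(\Delta)]$ via $\langle b\rangle=\ker\phi$ and $[H:\Delta]=l(\Delta)$ via $\langle b\rangle\cap\Delta=\langle b^{l(\Delta)}\rangle$. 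The two arguments carry the same content (the paper's claim that the $g_ib^j$ form a complete, repetition-free system is exactly what your tower computation verifies), but your version is more structural and makes explicit where normality of $\langle b\rangle$ and the infinite order of $b$ enter, while the paper's is shorter and more hands-on.

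One caveat on your wording: the second isomorphism theorem, with $\langle b\rangle$ normal, gives $\Delta\langle b\rangle/\langle b\rangle\cong\Delta/(\Delta\cap\langle b\rangle)$, not an isomorphism for $H/\Delta$; indeed $\Delta$ need not be normal in $H$, so $H/\Delta$ is only a coset space, and normality of $\langle b\rangle$ in $\pi_1(\mathcal{B}_1)$ does not make $\langle b\rangle$ normalize $\Delta$, contrary to your closing remark. What you actually need is the coset-counting (product) formula $[\Delta\langle b\rangle:\Delta]=[\langle b\rangle:\langle b\rangle\cap\Delta]$, which holds because the cosets $\Delta x$, $x\in\langle b\rangle$, exhaust $H$ (using $\Delta\langle b\rangle=\langle b\rangle\Delta$) and $\Delta x=\Delta y$ exactly when $xy^{-1}\in\Delta\cap\langle b\rangle$. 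This is a notational slip rather than a gap; with that fix the argument is complete.
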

\begin{proof}
If $l(\Delta)$ does not exists then all elements $b, b^2, b^3,\dots$
belong to mutually different cosets of $\Delta$ in
$\pi_{1}(\mathcal{B}_{1})$, thus the index of $\Delta$ is infinite,
which is a contradiction.

Let $g_1, \dots g_k$ be such elements of $\pi_1(\mathcal{B}_{1})$,
that $\phi(g_1), \dots \phi(g_k)$ is a complete system of right
coset representatives of $\phi(\Delta)$ in $\ZZ^2$. Then
$\{g_ib^j\mid 1 \le i \le k, 0 \le j \le l(\Delta)-1\}$ is a
complete system of right coset representatives of $\Delta$ in
$\pi_1(\mathcal{B}_{1})$. Thus $\l(\Delta)\cdot[\pi_1(\mathcal{B}_1)
: \phi(\Delta)]=[\pi_1(\mathcal{B}_{1}) : \Delta]=n$. \bigskip
\end{proof}

The next proposition shows that the introduced above invariant
$\phi(\Delta)$ is sufficient to determine whether $\Delta$ is
abelian or not.

\begin{proposition}\label{prop4}
Let $\Delta$ be a subgroup of finite index in
$\pi_{1}(\mathcal{B}_{1})$. Then $\Delta$ is abelian if and only if
$\phi(\Delta) \leqslant \{(2x,z)| \,\,x,z \in \ZZ \}$. \footnote{In
other words, $\Delta$ is abelian iff $\Delta \leqslant \langle
a^2,b,c\rangle$}
\end{proposition}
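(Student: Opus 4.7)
\bigskip
\noindent\textbf{Proof proposal.}
My plan is to reduce both directions to a direct computation using the multiplication law \eqref{multlaw1} from \Cref{prop2}(ii), exploiting the fact that the parity of the $a$-exponent is the only obstruction to commutativity in $\pi_1(\mathcal{B}_1)$. The key observation is the identity
$$
(a^{x}b^{y}c^{z})\cdot b^{l} = a^{x}b^{y+l}c^{z},\qquad b^{l}\cdot(a^{x}b^{y}c^{z}) = a^{x}b^{(-1)^{x}l+y}c^{z},
$$
so that a power of $b$ commutes with an arbitrary element $a^{x}b^{y}c^{z}$ if and only if either $l=0$ or $x$ is even.

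For the forward direction, assume $\Delta$ is abelian and suppose for contradiction that $\phi(\Delta)$ contains a pair $(x,z)$ with $x$ odd. Pick $g=a^{x}b^{y}c^{z}\in\Delta$ witnessing this. By \Cref{prop3}, since $\Delta$ has finite index, there exists a minimal positive $l=l(\Delta)$ with $b^{l}\in\Delta$. The computation above then gives $g b^{l}\neq b^{l} g$ (because $l\neq 0$ and $(-1)^{x}=-1$), contradicting the abelianness of $\Delta$. Hence every element of $\Delta$ has even $a$-exponent, i.e.\ $\phi(\Delta)\leqslant\{(2x,z)\mid x,z\in\ZZ\}$.

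For the converse, assume $\phi(\Delta)\leqslant\{(2x,z)\mid x,z\in\ZZ\}$, so that every element of $\Delta$ can be written as $a^{2m}b^{k}c^{p}$. Applying \eqref{multlaw1} twice, with $x'=2m'$ even, one obtains
$$
(a^{2m}b^{k}c^{p})(a^{2m'}b^{k'}c^{p'}) = a^{2(m+m')}b^{(-1)^{2m'}k+k'}c^{p+p'} = a^{2(m+m')}b^{k+k'}c^{p+p'},
$$
and the expression on the right is symmetric in the two triples $(m,k,p),(m',k',p')$. Thus any two elements of $\Delta$ commute, so $\Delta$ is abelian, and the footnote remark follows at once since $\langle a^{2},b,c\rangle$ is precisely the preimage under $\phi$ of $\{(2x,z)\mid x,z\in\ZZ\}$.

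There is no real obstacle here: once the canonical form and multiplication rule of \Cref{prop2} are in place, both implications are one-line computations, and the only external input needed is the existence of the integer $l(\Delta)$ guaranteed by \Cref{prop3} to produce a witness to non-commutativity in the forward direction.
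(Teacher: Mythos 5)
Your proposal is correct and follows essentially the same route as the paper: the ``if'' direction by the symmetric computation with even $a$-exponents via \eqref{multlaw1}, and the ``only if'' direction by observing that $b^{l(\Delta)}$ (which exists by \Cref{prop3} since the index is finite) fails to commute with any element of $\Delta$ whose $a$-exponent is odd, the two canonical forms differing because $b$ has infinite order.
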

\begin{proof}
Since
$$
a^{2x}b^yc^z \cdot a^{2x'}b^{y'}c^{z'} = a^{2x+2x'}b^{y+y'}c^{z+z'}
= a^{2x'}b^{y'}c^{z'} \cdot a^{2x}b^yc^z,
$$
the if part is obvious. The inequality $a^{2x+1}b^yc^z \cdot
b^{l(\Delta)} \neq b^{l(\Delta)} \cdot a^{2x+1}b^yc^z$ proves the
only if part. Indeed, by \ref{multlaw1} $a^{2x+1}b^yc^z \cdot
b^{l(\Delta)} =a^{2x+1}b^{y+l(\Delta)}c^z$ and $b^{l(\Delta)} \cdot
a^{2x+1}b^yc^z = a^{2x+1}b^{y-l(\Delta)}c^z$, this expressions are
not equal since the order of $b$ is infinite.
\end{proof}

As a corollary of \Cref{prop4} we obtain the value of
$s_{\ZZ^3,\pi_1(\mathcal{B}_1)}(n)$.

\begin{corollary}\label{prop5}
The number of subgroups of index $n$ in $\pi_1(\mathcal{B}_{1})$,
which are isomorphic to $\ZZ^3$, is given by the
formula:
$$
s_{\ZZ^3, \pi_1(\mathcal{B}_{1})}(n) = \sum_{2l \mid n}
\sigma_1(l)l.
$$
\end{corollary}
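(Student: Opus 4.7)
The plan is to reduce the counting problem to sublattices of a fixed copy of $\ZZ^3$ sitting inside $\pi_1(\mathcal{B}_1)$. By \Cref{prop4}, a finite-index subgroup $\Delta\leqslant\pi_1(\mathcal{B}_1)$ is abelian if and only if $\Delta\leqslant H$, where $H:=\langle a^2,b,c\rangle$. The multiplication law \ref{multlaw1} together with the uniqueness of canonical forms (\Cref{prop2}(iv)) shows that $a^2$, $b$, $c$ commute pairwise and that $(x,y,z)\mapsto a^{2x}b^{y}c^{z}$ defines an isomorphism $\ZZ^3\cong H$; moreover $[\pi_1(\mathcal{B}_1):H]=2$. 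In particular every finite-index abelian subgroup of $\pi_1(\mathcal{B}_1)$ is automatically isomorphic to $\ZZ^3$, so the subgroups counted by $s_{\ZZ^3,\pi_1(\mathcal{B}_1)}(n)$ are precisely the sublattices of $H\cong\ZZ^3$ of index $n/2$. This count is $0$ unless $n$ is even, in which case I write $n=2m$.

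Next I would count sublattices of $\ZZ^3$ of index $m$ via the Hermite normal form: each such sublattice is represented uniquely by an upper-triangular integer matrix with positive diagonal $(d_1,d_2,d_3)$ satisfying $d_1 d_2 d_3 = m$, with the three off-diagonal entries ranging freely in the residue classes determined by the column diagonal entries. A direct count of these matrices gives the classical formula
\[
s_{\ZZ^3}(m)=\sum_{d_1 d_2 d_3=m}d_2\,d_3^{2}=\sum_{d\mid m}d^{2}\,\sigma_1\!\left(\tfrac{m}{d}\right),
\]
after regrouping by $d=d_3$ and using $\sum_{d_1 d_2=m/d}d_2=\sigma_1(m/d)$.

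The final step is the arithmetic identity
\[
\sum_{d\mid m}d^{2}\,\sigma_1\!\left(\tfrac{m}{d}\right)=\sum_{l\mid m}l\,\sigma_1(l),
\]
which I would prove by double counting: both sides equal $\sum_{(d,e):\,de\mid m}d^{2}e$, obtained on the left by expanding $\sigma_1(m/d)=\sum_{e\mid m/d}e$ and on the right by substituting $l=de$ (so that $d\mid l$ and $e=l/d$). Rewriting the condition $l\mid n/2$ as $2l\mid n$ then produces the formula $\sum_{2l\mid n}\sigma_1(l)\,l$ asserted in the corollary.

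The only mildly non-routine ingredient is the combinatorial identity above, and even that reduces to a one-line Dirichlet convolution argument; all the conceptual content is already contained in \Cref{prop4} and the explicit identification $H\cong\ZZ^3$.
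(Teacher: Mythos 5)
Your proof is correct and follows essentially the same route as the paper: \Cref{prop4} reduces the count to the index-$n/2$ sublattices of $\langle a^2,b,c\rangle\cong\ZZ^3$, which is exactly the paper's reduction. The only difference is that the paper simply cites the known count $\sum_{k\mid m}\sigma_1(k)k$ of index-$m$ sublattices of $\ZZ^3$ (\cite{LisM}, Corollary 4.4), whereas you rederive it via Hermite normal form together with the Dirichlet-convolution identity $\sum_{d\mid m}d^{2}\sigma_1(m/d)=\sum_{l\mid m}l\,\sigma_1(l)$, both of which are correct.
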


\begin{proof}
Let $\Delta$ be a subgroup of index $n$ in $\pi_1(\mathcal{B}_{1})$
isomorphic to $\ZZ^3$, then by \Cref{prop4} $\phi(\Delta) \leqslant
\{(2x,z)| \,\,x,z \in \ZZ \}$. Thus from the definition of $\phi$ it
follows that $\Delta \leqslant <a^2,b,c> \cong \ZZ^3$. Since
$[\pi_{1}(\mathcal{B}_{1}) : <a^2,b,c>]=2$, we have $[<a^2,b,c> :
\Delta]=n/2$. The number of subgroups in $\ZZ^3$ of index $n/2$ is
well known (see, for instance, \cite{LisM} Corollary 4.4) and equals
to $\displaystyle\sum_{k \mid \frac{n}{2}} \sigma_1(k)k$.
\end{proof} \medskip

The following lemmas are technical statements, needed to introduce
the important invariant $\nu$.

\begin{lemma}\label{lemma2}
Let $\Delta$ be a subgroup of a finite index in
$\pi_1(\mathcal{B}_1)$. Then $\phi(\Delta)\cong \ZZ^2$.
\end{lemma}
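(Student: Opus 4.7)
The plan is to show that $\phi(\Delta)$ has finite index in $\ZZ^2$; the conclusion then follows from the standard fact that any finite-index subgroup of a finitely generated free abelian group is a free abelian group of the same rank.

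First, I would invoke \Cref{prop3}, which already does most of the work: it gives the identity $l(\Delta)\cdot[\pi_1(\mathcal{B}_1):\phi(\Delta)]=n$, where the bracket is interpreted (under the identification $\pi_1(\mathcal{B}_1)/\langle b\rangle\cong\ZZ^2$) as the index $[\ZZ^2:\phi(\Delta)]$. Since $n$ is finite and $l(\Delta)\ge 1$, this forces $[\ZZ^2:\phi(\Delta)]$ to be finite.

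Next, I would appeal to the structure theorem for subgroups of $\ZZ^2$: every subgroup is free abelian of rank at most $2$, and a subgroup has finite index if and only if its rank equals $2$. Applied to $\phi(\Delta)\leqslant\ZZ^2$, this yields $\phi(\Delta)\cong\ZZ^2$, as required.

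The proof is therefore a two-line corollary of the preceding lemma, and there is no real obstacle; the only thing to be careful about is the notational convention whereby $\phi(\Delta)$ is viewed simultaneously as a subgroup of the quotient $\pi_1(\mathcal{B}_1)/\langle b\rangle$ and of its canonical model $\ZZ^2$, so that the index appearing in \Cref{prop3} is the index inside $\ZZ^2$.
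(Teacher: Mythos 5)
Your proposal is correct and essentially matches the paper's proof: both arguments reduce the statement to the finiteness of the index of $\phi(\Delta)$ in $\ZZ^2$ (the paper by noting that $[\phi(\pi_1(\mathcal{B}_1)):\phi(\Delta)]$ divides $[\pi_1(\mathcal{B}_1):\Delta]$, you via the identity of \Cref{prop3}, which is available at this point) and then invoke the standard fact that a finite-index subgroup of $\ZZ^2$ is itself isomorphic to $\ZZ^2$. Your remark about identifying the index in \Cref{prop3} with $[\ZZ^2:\phi(\Delta)]$ is the right reading of the paper's notation.
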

\begin{proof}
The index $[\phi(\pi_1(\mathcal{B}_1)):\phi(\Delta)]$ divides
$[\pi_1(\mathcal{B}_1):\Delta]$, thus it is finite. Any subgroup of
finite index in $\ZZ^2$ is necessarily isomorphic to $\ZZ^2$.
\end{proof}

{\bf Notation.} By $\overline{v}=(x_v,z_v)$ and
$\overline{u}=(x_u,z_u)$ denote a pair of generators of
$\phi(\Delta)$, where $\phi(\Delta)$ is considered as a subgroup of
$\{(x,z)\mid x\in \ZZ, z\in \ZZ\}$.

\begin{lemma}\label{lemma3}
Any subgroup of finite index in $\langle a^2,b,c \rangle$ is
isomorphic to $\ZZ^3$.
\end{lemma}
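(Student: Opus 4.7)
The plan is to proceed in two steps: first identify $\langle a^2, b, c\rangle$ itself with $\ZZ^3$, and then invoke the standard structure theorem for subgroups of finitely generated free abelian groups.

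For the first step, I would use the multiplication law \eqref{multlaw1} from \Cref{prop2}. Since the sign factor $(-1)^{x'}$ equals $1$ whenever $x'$ is even, elements of the form $a^{2x}b^yc^z$ multiply componentwise:
\[
a^{2x}b^yc^z \cdot a^{2x'}b^{y'}c^{z'} = a^{2(x+x')}b^{y+y'}c^{z+z'}.
\]
This already shows that $\langle a^2, b, c\rangle$ is abelian (as was observed in \Cref{prop4}), and moreover it exhibits it as generated by three pairwise commuting elements. The absence of any nontrivial relation among $a^2$, $b$, $c$ follows at once from the uniqueness of the canonical form proved in \Cref{prop2}(iv): a relation would give a non-trivial canonical form $a^{2x}b^yc^z$ equal to the identity, contradicting uniqueness. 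Hence $\langle a^2, b, c\rangle \cong \ZZ^3$.

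For the second step, let $\Delta$ be a subgroup of finite index in $\langle a^2, b, c\rangle \cong \ZZ^3$. As a subgroup of a finitely generated free abelian group, $\Delta$ is itself finitely generated, and being a subgroup of a torsion-free group it is torsion-free. By the classification of finitely generated abelian groups, $\Delta \cong \ZZ^k$ for some $0 \le k \le 3$. Then the quotient $\ZZ^3/\Delta$ has rank $3-k$; since the index $[\ZZ^3 : \Delta]$ is finite, this rank must be zero, forcing $k=3$.

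There is essentially no obstacle here: the argument is completely elementary once \Cref{prop2} has been established. The only subtlety worth stating explicitly is the verification that $a^2,b,c$ generate a \emph{free} abelian group of rank three, which is why I would invoke \Cref{prop2}(iv) rather than merely appeal to commutativity.
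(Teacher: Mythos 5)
Your proof is correct and follows essentially the same route as the paper, which simply identifies $\langle a^2,b,c\rangle$ with $\ZZ^3$ and invokes the standard fact that a finite-index subgroup of $\ZZ^3$ is again $\ZZ^3$. You merely spell out the two steps the paper leaves implicit (the isomorphism $\langle a^2,b,c\rangle\cong\ZZ^3$ via the canonical form of \Cref{prop2}, and the structure-theorem/rank argument), which is fine but not a different argument.
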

\begin{proof}
The subgroup of finite index in $\ZZ^3$ is isomorphic to $\ZZ^3$.
\end{proof}

\begin{lemma}\label{prop7}
Let $(x,z)\in \phi(\Delta)$. Then there exist an integer number
$\mu(x,z),\,\, 0\le \mu(x,z) \le l(\Delta)-1$, such that for all
$a^xb^yc^z \in \Delta$ we have $y \equiv \mu(x,z) \mod l(\Delta)$.
\end{lemma}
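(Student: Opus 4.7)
The plan is to show that the $y$-coordinates of any two elements of $\Delta$ sharing the same image under $\phi$ differ by a multiple of $l(\Delta)$; then $\mu(x,z)$ can be defined as the common residue class representative.

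First, since $(x,z)\in\phi(\Delta)$, there is at least one element of $\Delta$ of the form $a^xb^yc^z$, so the set of admissible $y$ values is nonempty. Suppose $a^xb^{y_1}c^z$ and $a^xb^{y_2}c^z$ both lie in $\Delta$. Using the product formula \eqref{multlaw1} from \Cref{prop2}, I would first compute the inverse of a canonical form: solving $a^xb^yc^z\cdot a^{x'}b^{y'}c^{z'}=1$ one obtains
\[
(a^xb^yc^z)^{-1}=a^{-x}b^{-(-1)^x y}c^{-z}.
\]
Then I would multiply out
\[
(a^xb^{y_1}c^z)^{-1}\cdot a^xb^{y_2}c^z
= a^{-x}b^{-(-1)^x y_1}c^{-z}\cdot a^xb^{y_2}c^z,
\]
and applying \eqref{multlaw1} again the $a$- and $c$-exponents cancel while the $b$-exponent becomes $(-1)^x(-(-1)^x y_1)+y_2=y_2-y_1$. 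Hence $b^{y_2-y_1}\in\Delta$.

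By the definition of $l(\Delta)$ in \Cref{prop3} as the smallest positive integer with $b^{l(\Delta)}\in\Delta$, the set of integers $m$ with $b^m\in\Delta$ is exactly $l(\Delta)\ZZ$. Therefore $l(\Delta)\mid y_2-y_1$, i.e.\ all elements of $\Delta$ of the form $a^xb^yc^z$ have the same value of $y$ modulo $l(\Delta)$. Defining $\mu(x,z)$ as the unique integer in $\{0,1,\dots,l(\Delta)-1\}$ representing this residue class gives the required invariant.

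The only genuine computation is the inversion/cancellation with the twisted multiplication \eqref{multlaw1}, which is routine once the sign $(-1)^x$ is tracked carefully; I do not anticipate a real obstacle. One subtlety worth a single-line remark is that $b^{y_2-y_1}\in\Delta$ with $y_2\ne y_1$ forces $l(\Delta)$ to divide $y_2-y_1$ (and not merely to exist), which uses that $b$ has infinite order in $\pi_1(\mathcal{B}_1)$, a fact already implicit in the uniqueness of the canonical form (\Cref{prop2}(iv)).
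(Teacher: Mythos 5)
Your proof is correct and follows essentially the same route as the paper: the paper also considers two elements $g=a^xb^yc^z$ and $h=a^xb^{y'}c^z$ of $\Delta$ with the same $\phi$-image, observes that $h^{-1}g=b^{y-y'}\in\Delta$, and concludes from the minimality of $l(\Delta)$ that $y\equiv y'\pmod{l(\Delta)}$ (phrased there as a proof by contradiction). Your version merely makes explicit the inverse formula under \eqref{multlaw1} and the fact that $\{m\in\ZZ : b^m\in\Delta\}=l(\Delta)\ZZ$, which the paper leaves implicit.
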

\begin{proof}
Assume the converse is true. This means that there exist
$g=a^xb^yc^z \in \Delta$ and $h=a^xb^{y'}c^z \in \Delta$, such that
$y \not\equiv y' \mod l(\Delta)$. Since $h^{-1}g \in \Delta$,
$b^{y-y'} \in \Delta$, that contradicts the minimality of
$l(\Delta)$.
\end{proof}

\begin{lemma}\label{prop8}
Assume $\phi(\Delta) \nleqslant \{(2x,z)| \,\,x,z \in \ZZ \}$, then
one can choose the generators $\overline{v}=(x_v,z_v)$ and
$\overline{u}=(x_u,z_u)$ in such a way that $x_v$ is odd and $x_u$
is even.
\end{lemma}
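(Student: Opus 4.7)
The plan is to argue by case analysis on the parities of the first coordinates of an initial pair of generators, and then, if necessary, replace one of them via a unimodular change of basis so that the parity condition is satisfied.

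First, fix any pair of generators $\overline{v}_0 = (x_0, z_0)$ and $\overline{u}_0 = (x_1, z_1)$ of $\phi(\Delta)$, which exists by \Cref{lemma2}. I would then observe that the hypothesis $\phi(\Delta) \nleqslant \{(2x,z)\mid x,z\in\ZZ\}$ rules out the possibility that both $x_0$ and $x_1$ are even: if both were even then every integer combination of $\overline{v}_0$ and $\overline{u}_0$ would have even first coordinate, and since these generate $\phi(\Delta)$ this would place $\phi(\Delta)$ inside $\{(2x,z)\}$, contradicting the assumption.

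There are two remaining cases. If exactly one of $x_0, x_1$ is odd, then (after possibly swapping the two generators) the pair $\overline{v}_0, \overline{u}_0$ already satisfies the required parity condition and we are done. If both $x_0$ and $x_1$ are odd, then I would replace $\overline{u}_0$ by $\overline{u}_0 - \overline{v}_0$; the new first coordinate $x_1 - x_0$ is even, while $\overline{v}_0$ still has odd first coordinate $x_0$. The change of basis matrix $\left(\begin{smallmatrix} 1 & -1 \\ 0 & 1 \end{smallmatrix}\right)$ is unimodular, so $\{\overline{v}_0, \overline{u}_0 - \overline{v}_0\}$ is still a generating set of $\phi(\Delta)$. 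Setting $\overline{v} = \overline{v}_0$ and $\overline{u} = \overline{u}_0 - \overline{v}_0$ finishes the argument.

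There is no real obstacle here; the statement is a standard elementary fact about bases of rank-$2$ subgroups of $\ZZ^2$, and the only thing to be careful about is to check that in the excluded case (both first coordinates even) the containment $\phi(\Delta) \leqslant \{(2x,z)\}$ genuinely follows, which it does because any $\ZZ$-linear combination preserves parity of the first coordinate when both basis vectors have even first coordinates.
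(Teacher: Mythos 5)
Your proof is correct and follows essentially the same route as the paper: both exclude the case of two even first coordinates using the hypothesis, and then fix parity by a unimodular replacement of one generator (the paper uses $\overline{u}+\overline{v}$ where you use $\overline{u}-\overline{v}$, which is immaterial).
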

\begin{proof}
At least one $x_v$ and $x_u$ is odd, otherwise the group $\Delta$,
generated by  $\overline{v}=(x_v,z_v)$ and $\overline{u}=(x_u,z_u)$
is a subgroup of $\{(2x,z)| \,\,x,z \in \ZZ\}$. Without loss of
generality suppose $x_v$ is odd. If $x_u$ is odd replace
$\overline{u}$ with $\overline{u}+\overline{v}$.
\end{proof}

From now on we fix some $\overline{v}$ and $\overline{u}$, chosen in
this way.

\bigskip

{\bf Notation.} Let $p,q \in \ZZ$, put
$(x,z)=p\overline{v}+q\overline{u}$. Denote $\nu(p,q)=\mu(x,z)$.

{\bf Remark.} Consider an arbitrary element of $\phi(\Delta)$,
$w=(x,z)=p\overline{v}+q\overline{u}$. Let
$a^{px_v+qx_u}b^yc^{pz_v+qz_u} \in \Delta$ be an arbitrary preiamge
of $w$ under $\phi$ . Then by definition $\nu(p,q) \equiv y
\mod{l(\Delta})$.

\begin{lemma}[almost additivity]\label{prop10}
$\nu(s+2p,t+q)\equiv \nu(s,t)+\nu(2p;q) \mod l(\Delta)$.
\end{lemma}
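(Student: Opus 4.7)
The plan is to witness the required congruence by exhibiting an explicit element of $\Delta$ whose $\phi$-image is $(s+2p)\overline{v}+(t+q)\overline{u}$, and reading off its $b$-exponent modulo $l(\Delta)$.

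First I would pick a preimage under $\phi$ of $s\overline{v}+t\overline{u}$ inside $\Delta$, which by \Cref{prop7} has the form
$$g_1 = a^{sx_v+tx_u}\,b^{\nu(s,t)}\,c^{sz_v+tz_u},$$
and similarly a preimage of $2p\overline{v}+q\overline{u}$,
$$g_2 = a^{2px_v+qx_u}\,b^{\nu(2p,q)}\,c^{2pz_v+qz_u}.$$
Both lie in $\Delta$, so their product $g_1g_2$ lies in $\Delta$ as well. Next I would compute $g_1g_2$ using the multiplication law (\ref{multlaw1}) from \Cref{prop2}(ii): the $a$- and $c$-exponents simply add, while the $b$-exponent picks up the factor $(-1)^{2px_v+qx_u}$ coming from $g_2$'s $a$-exponent.

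The crucial observation, and really the only substantive step, is that $2px_v+qx_u$ is even: $2px_v$ is even trivially, and $qx_u$ is even because $x_u$ was chosen even in \Cref{prop8}. Hence the sign $(-1)^{2px_v+qx_u}=+1$, and the computation collapses to
$$g_1g_2 = a^{(s+2p)x_v+(t+q)x_u}\,b^{\nu(s,t)+\nu(2p,q)}\,c^{(s+2p)z_v+(t+q)z_u}.$$
Since $g_1g_2\in\Delta$ and $\phi(g_1g_2)=(s+2p)\overline{v}+(t+q)\overline{u}$, applying the defining property of $\nu$ (equivalently \Cref{prop7}) to this element yields
$$\nu(s+2p,t+q)\equiv \nu(s,t)+\nu(2p,q)\pmod{l(\Delta)},$$
as required.

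There is no real obstacle here; the lemma is essentially a bookkeeping consequence of \Cref{prop2}(ii). What is worth emphasizing, however, is the role of \Cref{prop8}: the parity assumption $x_u\equiv 0\pmod 2$ is exactly what guarantees that the twisting sign in the multiplication law disappears, and hence that $\nu$ is additive in its second argument (and in twice its first). This also explains why the statement only asserts additivity after doubling the first coordinate -- without the factor of $2$ in $s+2p$, the argument would pick up a $(-1)^{x_v}=-1$ and fail.
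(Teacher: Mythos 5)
Your proof is correct and is essentially the same as the paper's: both take preimages in $\Delta$ of $s\overline{v}+t\overline{u}$ and $2p\overline{v}+q\overline{u}$, multiply them by the law of \Cref{prop2}(ii), and use that the second factor's $a$-exponent $2px_v+qx_u$ is even (since $x_u$ is even by \Cref{prop8}) so that no sign appears; you merely make this parity step explicit where the paper leaves it implicit. The only nitpick is that \Cref{prop7} gives the $b$-exponent of a preimage only up to a multiple of $l(\Delta)$, not exactly $\nu(s,t)$, but since the conclusion is a congruence modulo $l(\Delta)$ (or after adjusting by $b^{-kl(\Delta)}\in\Delta$) this is harmless.
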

\begin{proof}
Let $g,h \in \Delta$ be a preimages of elements
$s\overline{v}+t\overline{u}$ and $2p\overline{v}+q\overline{u}$
respectively under the homomorphism $\phi$. In other words,
$g=a^{sx_v+tx_u}b^{\nu(s,t)+kl}c^{sz_v+tz_u}$ and
$h=a^{2px_v+qx_u}b^{k'l}c^{2pz_v+qz_u}$. Then
$gh=a^{(s+2p)x_v+(t+s)x_u}b^{\nu(s,t)+\nu(2p,q)+(k+k')l(\Delta)}c^{(s+2p)z_v+(t+s)z_u}$
by the formula \ref{multlaw1}. Thus $\nu(s+2p,t+q)\equiv
\nu(s,t)+\nu(2p;q) \mod l(\Delta)$ by the definition of
$\nu(s+2p,t+q)$.
\end{proof}
\begin{lemma}\label{prop9}
$\nu(2p,2q)=0$.
\end{lemma}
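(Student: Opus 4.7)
The plan is to exhibit a preimage in $\Delta$ of $2p\overline{v}+2q\overline{u}$ whose $b$-exponent is a multiple of $l(\Delta)$; the natural candidate is the square of any preimage of $p\overline{v}+q\overline{u}$. Pick $h\in\Delta$ with $\phi(h)=p\overline{v}+q\overline{u}$ and write $h=a^{px_v+qx_u}b^{y}c^{pz_v+qz_u}$ where $y\equiv\nu(p,q)\pmod{l(\Delta)}$. The multiplication law \ref{multlaw1} yields
$$
h^{2}=a^{2(px_v+qx_u)}\,b^{\bigl((-1)^{px_v+qx_u}+1\bigr)y}\,c^{2(pz_v+qz_u)},
$$
so $\phi(h^{2})=2p\overline{v}+2q\overline{u}$ and, by \Cref{prop7}, the $b$-exponent of $h^{2}$ is congruent to $\nu(2p,2q)$ modulo $l(\Delta)$.

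If $px_v+qx_u$ is odd (equivalently, $p$ is odd, since $x_v$ is odd and $x_u$ is even by \Cref{prop8}), the $b$-exponent of $h^{2}$ is $0$ and we are done. The nontrivial case is when $p$ is even, so that the $b$-exponent of $h^{2}$ equals $2y$. I would resolve it with an auxiliary claim: every element $a^{x}b^{y}c^{z}\in\Delta$ with $x$ even satisfies $2y\equiv 0\pmod{l(\Delta)}$. To prove it, pick $g\in\Delta$ with $\phi(g)=\overline{v}$, so that $g$ has odd $a$-exponent $x_v$. A short direct computation using \ref{multlaw1}, the centrality of $c$, and the relation $a^{x_v}ba^{-x_v}=b^{-1}$ (which follows from $aba^{-1}=b^{-1}$ and $x_v$ odd) yields $ghg^{-1}=a^{x}b^{-y}c^{z}$. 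This element lies in $\Delta$ and has the same $\phi$-image as $h$, so \Cref{prop7} forces $-y\equiv y\pmod{l(\Delta)}$, i.e.\ $2y\equiv 0\pmod{l(\Delta)}$.

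Applying the auxiliary claim to the element $h$ in the even case (whose $a$-exponent $px_v+qx_u$ is even) gives $2y\equiv 0\pmod{l(\Delta)}$, whence $\nu(2p,2q)\equiv 0\pmod{l(\Delta)}$. Since $\nu(2p,2q)\in\{0,1,\dots,l(\Delta)-1\}$ by the definition of $\mu$ in \Cref{prop7}, this forces $\nu(2p,2q)=0$. The only step requiring real insight is the auxiliary ``$x$ even $\Rightarrow 2y\equiv 0$'' claim and its one-line conjugation proof; everything else is bookkeeping with the multiplication law \ref{multlaw1}.
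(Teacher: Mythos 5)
Your proof is correct, and it takes a slightly different route than the paper's. Both arguments hinge on the same key computation — squaring an element whose $a$-exponent is odd kills the $b$-exponent via $(-1)^{x}y+y=0$ — but they diverge in how the general case $(2p,2q)$ is reached. The paper only squares preimages of $\overline{v}$ and of $\overline{v}+\overline{u}$ (both with odd $a$-exponent), obtaining $\nu(2,0)=\nu(2,2)=0$, and then invokes the almost-additivity Lemma \ref{prop10} to propagate this to all $\nu(2p,2q)$. You instead square a preimage of $p\overline{v}+q\overline{u}$ directly; when $p$ is odd this is the same cancellation, and when $p$ is even you supply a new ingredient: the conjugation identity $ghg^{-1}=a^{x}b^{-y}c^{z}$ (for $g$ a preimage of $\overline{v}$ and $x$ even), which together with Lemma \ref{prop7} forces $2y\equiv 0\pmod{l(\Delta)}$ for every element of $\Delta$ with even $a$-exponent. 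Your route thus bypasses Lemma \ref{prop10} entirely, at the cost of introducing this auxiliary claim; the paper's route is a bit shorter on the page because \ref{prop10} is already available. A pleasant byproduct of your auxiliary claim is that it immediately gives part (ii) of Lemma \ref{prop11} as well ($2\eps(\Delta)\equiv 0\pmod{l(\Delta)}$, by applying it to a preimage of $\overline{u}$), so the conjugation observation could serve as a common source for both statements. All the individual steps you use (the inverse formula $g^{-1}=a^{-x_v}b^{y_g}c^{-z_v}$ for $x_v$ odd, the parity bookkeeping via Lemma \ref{prop8}, and the final passage from $\nu(2p,2q)\equiv 0$ to $\nu(2p,2q)=0$ using the range $0\le\nu<l(\Delta)$) check out against the multiplication law \ref{multlaw1}.
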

\begin{proof}
Let $g\in \Delta$ be a preimage of element $\overline{v}$ under the
homomorphism $\phi$. In other words, $g=a^{x_v}b^{y}c^{z_v}$. Then
$$
g^2=a^{2x_v}b^{(-1)^{x_v}y+y}c^{2z_v}=a^{2x_v}c^{2z_v}.
$$
The last equality holds since $x_v$ is odd. Thus $\nu(2,0)=0$.

Analogously, consider $h\in \Delta$ a preimage of
$\overline{v}+\overline{u}$ under $\phi$ to conclude $\nu(2,2)=0$.
Use \cref{prop10} to finish the proof.
%
\end{proof}

In other words to define the function $\nu(s,t)$ it is sufficient to
determine $\nu(0,0)$, $\nu(0,1)$, $\nu(1,0)$ and $\nu(1,1)$, also
\cref{prop9} gives $\nu(0,0)=0$.

\begin{lemma}\label{prop11}
The following holds:
\begin{itemize}
\item[(i)] $ \nu(1,1) \equiv \nu(0,1) + \nu(1,0) \mod{l(\Delta)}$
\item[(ii)] $ 2\nu(0,1) \equiv 0 \mod{l(\Delta)}$.
\end{itemize}
\end{lemma}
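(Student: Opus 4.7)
My plan is to deduce both parts of the lemma as essentially immediate corollaries of the almost-additivity of $\nu$ (Lemma \ref{prop10}) combined with the vanishing result (Lemma \ref{prop9}), so the main work will just be choosing the right specializations of indices. I expect no significant obstacle: once the bookkeeping with the recently established identities is carried out, both claims fall out mechanically.

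For part (i), I would apply Lemma \ref{prop10} with the parameters $s=1$, $t=0$, $p=0$, $q=1$. The statement $\nu(s+2p,t+q)\equiv \nu(s,t)+\nu(2p,q)\pmod{l(\Delta)}$ then specializes to $\nu(1,1)\equiv \nu(1,0)+\nu(0,1)\pmod{l(\Delta)}$, which is exactly what is claimed.

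For part (ii), I would similarly specialize Lemma \ref{prop10}, this time with $s=0$, $t=1$, $p=0$, $q=1$, to obtain
$$\nu(0,2)\equiv \nu(0,1)+\nu(0,1)=2\nu(0,1)\pmod{l(\Delta)}.$$
On the other hand, Lemma \ref{prop9} applied with $p=0$, $q=1$ gives $\nu(0,2)=0$. Combining these two identities yields $2\nu(0,1)\equiv 0\pmod{l(\Delta)}$, as required.

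If a more self-contained proof is preferred (not relying on specializing the almost-additivity lemma), I would instead argue directly from the multiplication formula \eqref{multlaw1}: choose a preimage $h=a^{x_u}b^{\nu(0,1)+k l(\Delta)}c^{z_u}\in\Delta$ of $\overline{u}$; since $x_u$ is even, squaring gives $h^2=a^{2x_u}b^{2(\nu(0,1)+kl(\Delta))}c^{2z_u}$, a preimage of $2\overline{u}$, so $\nu(0,2)\equiv 2\nu(0,1)\pmod{l(\Delta)}$, and then Lemma \ref{prop9} closes the argument for (ii); an analogous direct computation with $gh$ for preimages $g$ of $\overline{v}$ and $h$ of $\overline{u}$ (again using that $x_u$ is even so the sign $(-1)^{x_u}$ is $+1$) recovers (i).
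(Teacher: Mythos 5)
Your proof is correct and follows essentially the same route as the paper, whose argument is precisely that the lemma is an immediate corollary of the almost-additivity relation $\nu(s+2p,t+q)\equiv \nu(s,t)+\nu(2p,q) \bmod l(\Delta)$ together with $\nu(2p,2q)=0$; your chosen specializations ($s=1,t=0,p=0,q=1$ for (i) and $s=0,t=1,p=0,q=1$ plus $\nu(0,2)=0$ for (ii)) are exactly the details the paper leaves implicit. The alternative direct computation with preimages of $\overline{u}$ and $\overline{v}$ is also valid (it is just the proof of the almost-additivity lemma re-run in the special case, using that $x_u$ is even), so nothing further is needed.
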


\begin{proof}
Immediate corollary of lemmas \ref{prop10} and \ref{prop9}.
\end{proof}

{\bf Notation.} Put $\rho(\Delta)=\nu(1,0)$ and
$\eps(\Delta)=\nu(0,1)$. \bigskip

{\bf Definition.} A 4-plet
$(l(\Delta),\phi(\Delta),\rho(\Delta),\eps(\Delta))$ is called {\em
$n$-essential} if the following conditions holds:
\begin{itemize}
\item[(i)] $l(\Delta)$ is a positive divisor of $n$,
\item[(ii)] $\phi(\Delta)$ is a subgroup of index $n/l(\Delta)$ in $\ZZ^2$, but not a subgroup of $\{(2p,q\mid p\in\ZZ, \,q\in\ZZ
)\}$,
\item[(iii)] $\rho(\Delta),\eps(\Delta) \in \{0,1,\dots ,l(\Delta)-1\}$, and $2\eps(\Delta) \equiv 0
\mod{l(\Delta)}$.
\end{itemize}

\begin{proposition}\label{prop12}
There is a bijection between the set of $n$-essential 4-plets
$(l(\Delta),\phi(\Delta),\rho(\Delta),\eps(\Delta))$ and non-abelian
subgroups $\Delta$ of index $n$ in $\pi_1(\mathcal{B}_1)$.
\end{proposition}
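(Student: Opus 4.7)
The plan is to establish the bijection by direct construction in both directions, using the canonical form of \Cref{prop2} and the multiplication law \ref{multlaw1}. In the forward direction, starting with a non-abelian subgroup $\Delta$ of index $n$, I would verify that the 4-plet is $n$-essential: the divisibility $l(\Delta)\mid n$ together with $[\ZZ^2:\phi(\Delta)]=n/l(\Delta)$ is precisely \Cref{prop3}; the condition $\phi(\Delta)\nleqslant\{(2p,q)\}$ is the non-abelian hypothesis via \Cref{prop4}, which also justifies the choice of generators $\overline{v},\overline{u}$ from \Cref{prop8}; and the condition $2\eps(\Delta)\equiv 0\pmod{l(\Delta)}$ is exactly \Cref{prop11}(ii).

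For injectivity, I would argue that the 4-plet reconstructs $\Delta$. By the almost-additivity \Cref{prop10} and \Cref{prop9}, the function $\nu(s,t)$ on $\ZZ^2$ is determined by its four values at $(0,0),(1,0),(0,1),(1,1)$, and \Cref{prop11}(i) further reduces these to $\rho(\Delta)$ and $\eps(\Delta)$. Hence for every $(p,q)\in\ZZ^2$ the $b$-exponent of any preimage in $\Delta$ of $p\overline{v}+q\overline{u}$ is determined modulo $l(\Delta)$, and together with $b^{l(\Delta)}\in\Delta$ this pins down $\Delta$ in the canonical form.

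For surjectivity, given an $n$-essential 4-plet I would define $\nu$ as above and take $\Delta$ to be
\[
\bigl\{\,a^{px_v+qx_u}\,b^{\nu(p,q)+kl(\Delta)}\,c^{pz_v+qz_u}\ \bigm|\ p,q,k\in\ZZ\,\bigr\}.
\]
The main obstacle is verifying closure under multiplication: the sign $(-1)^{p_2x_v+q_2x_u}$ produced by \ref{multlaw1} reduces to $(-1)^{p_2}$ because $x_v$ is odd and $x_u$ is even, so closure boils down to the identity
\[
(-1)^{p_2}\nu(p_1,q_1)+\nu(p_2,q_2)\equiv \nu(p_1+p_2,q_1+q_2)\pmod{l(\Delta)}.
\]
For even $p_2$ this is immediate from the explicit form of $\nu$; for odd $p_2$ the congruence reduces to $2q_1\eps(\Delta)\equiv 0\pmod{l(\Delta)}$, which is exactly the $n$-essentiality condition $2\eps(\Delta)\equiv 0\pmod{l(\Delta)}$. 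Closure under inversion is a routine variant of the same calculation. Once $\Delta$ is shown to be a subgroup, \Cref{prop4} and the odd $x_v$ give non-abelianness, the minimality of $l(\Delta)$ follows from the construction (elements mapping to $(0,0)$ have $b$-exponent in $l(\Delta)\ZZ$), the index equals $n$ by \Cref{prop3}, and the 4-plet extracted from this $\Delta$ is the one we started from, so the two maps are mutually inverse.
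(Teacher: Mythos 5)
Your proposal is correct and follows essentially the same route as the paper: $n$-essentiality of the 4-plet via \Cref{prop3} and \Cref{prop11}, injectivity by reconstructing $\Delta$ from $\nu$, and surjectivity by exhibiting exactly the subgroup the paper writes down as a union of four parity classes of $(p,q)$ with $b$-exponents $kl,\ \rho+kl,\ \eps+kl,\ \rho+\eps+kl$. The only difference is that you actually carry out the closure computation (showing it reduces to $2\eps(\Delta)\equiv 0 \bmod l(\Delta)$), which the paper compresses into the phrase ``direct verification.''
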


\begin{proof}
Let $\Delta$ be a non-abelian subgroup of index $n$ in
$\pi_1(\mathcal{B}_1)$. Since $(l(\Delta)$, $\phi(\Delta)$,
$\rho(\Delta)$ and $\eps(\Delta))$ are well-defined there is an
injection from the set of considered subgroups to the set of
4-plets. This 4-plets are $n$-essential in virtue of \cref{prop3}
and \cref{prop11}. Thus we have to show that every 4-plet is
achieved in this way.

Consider an $n$-essential 4-plet
$(l(\Delta),\phi(\Delta),\rho(\Delta),\eps(\Delta))$. Choose two
generating vectors $\overline{v}$ and $\overline{u}$ for the group
$\phi(\Delta)$ as described in \cref{prop8}. Direct verification
shows that the set
$$
\aligned & \Delta=\{a^{2px_v+2qx_u}b^{kl(\Delta}c^{2pz_v+2qz_u}|
\,\,p,q,k \in \ZZ\}\bigcup
\{a^{(2p+1)x_v+2qx_u}b^{\rho(\Delta+kl(\Delta}c^{(2p+1)z_v+2qz_u}|
\,\,p,q,k \in \ZZ\} \bigcup \\& \bigcup
\{a^{2px_v+(2q+1)x_u}b^{\eps(\Delta+kl(\Delta}c^{2pz_v+(2q+1)z_u}|
\,\,p,q,k \in \ZZ\} \bigcup\\&
\bigcup\{a^{(2p+1)x_v+(2q+1)x_u}b^{\rho(\Delta+\eps(\Delta+kl(\Delta}c^{(2p+1)z_v+(2q+1)z_u}|
\,\,p,q,k \in \ZZ\}
\endaligned
$$
is a subgroup in $\pi_{1}(\mathcal{B}_{1})$, again $\Delta$ have the
index $n$ in $\pi_1(\mathcal{B}_1)$ in virtue of \cref{prop3}.
\end{proof}

\begin{proposition}\label{prop13}
The type of nonabelian subgroup $\Delta$ of $\pi_1(\mathcal{B}_1)$
is uniquely determined by the value of $\eps(\Delta)$. More
precisely, if $\eps(\Delta) =0$ then $\Delta \cong Pc$, if
$l(\Delta)$ is even and $\eps(\Delta) =l(\Delta)/2$ then $\Delta
\cong Cc$.
\end{proposition}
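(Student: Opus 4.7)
The plan is to construct an explicit isomorphism $\Delta \cong Pc$ or $\Delta \cong Cc$ by choosing natural generators of $\Delta$, reading off the relations they satisfy, and verifying injectivity via the canonical form of \Cref{prop2}.

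Motivated by the description of $\Delta$ given in the proof of \Cref{prop12}, I set
$$
A = a^{x_v} b^{\rho(\Delta)} c^{z_v}, \qquad B = b^{l(\Delta)}, \qquad C = a^{x_u} b^{\eps(\Delta)} c^{z_u}.
$$
A short verification using \ref{multlaw1} together with the almost-additivity of $\nu$ (\Cref{prop10}, \Cref{prop9}) shows that each of the four families listed in \Cref{prop12} can be written as a product $A^p B^k C^q$; hence $A,B,C$ generate $\Delta$.

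Next, I compute the commutation relations by direct application of \ref{multlaw1}, exploiting that $x_v$ is odd and $x_u$ is even (\Cref{prop8}). The calculations give
$$
A B A^{-1} = B^{-1}, \qquad C B C^{-1} = B, \qquad [A, C] = b^{-2\eps(\Delta)}.
$$
If $\eps(\Delta) = 0$, the commutator is trivial and $(A,B,C)$ satisfies exactly the defining relations \ref{fundplus3} of $Pc$; the assignment $a \mapsto A$, $b \mapsto B$, $c \mapsto C$ therefore extends to a surjective homomorphism $\Phi \colon Pc \to \Delta$. If $l(\Delta)$ is even and $\eps(\Delta) = l(\Delta)/2$, then $[A,C] = b^{-l(\Delta)} = B^{-1}$; setting $\alpha = A$, $\beta = B^{-1}$, $\gamma = C$, one verifies the three relations $\gamma\beta\gamma^{-1} = \beta$, $\alpha\beta\alpha^{-1} = \beta^{-1}$, and $\alpha\gamma\alpha^{-1}\gamma^{-1} = \beta$ defining $Cc$ in \ref{fundminus3}, yielding a surjective homomorphism $\Psi \colon Cc \to \Delta$.

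To establish injectivity, I put an element of the source in canonical form $a^p b^k c^q$ (respectively $\alpha^p \beta^k \gamma^q$); this is \Cref{prop2} for $Pc$, and an analogous canonical form for $Cc$ follows at once from \ref{fundminus3}. By \ref{multlaw1}, the image reduces to a canonical form $a^{p x_v + q x_u}\, b^{\ast}\, c^{p z_v + q z_u}$ inside $\pi_1(\mathcal{B}_1)$, which by \Cref{prop2}(iv) can be trivial only if $p\overline{v} + q\overline{u} = 0$ in $\ZZ^2$; linear independence of $\overline{v}$ and $\overline{u}$ forces $p = q = 0$, and then $k = 0$ since $b$ has infinite order. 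The main obstacle is the bookkeeping of $b$-exponents through conjugations with mixed parities of $p$ and $q$; but with $x_v$ odd and $x_u$ even fixed by \Cref{prop8}, every sign in \ref{multlaw1} is determined and the verification is mechanical.
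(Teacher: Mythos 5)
Your proposal is correct and takes essentially the same route as the paper's own proof: the same generators (the paper uses $c'=a^{x_u}c^{z_u}$ and $\gamma'=a^{x_u}b^{-\eps}c^{z_u}$ in the two cases, which agree with your $C$ up to a factor of $B$), the same verification of the defining relations \ref{fundplus3} resp.\ \ref{fundminus3}, and the same injectivity argument via the canonical form of \Cref{prop2} together with the nonsingularity of the matrix formed by $\overline{v}$ and $\overline{u}$. The only cosmetic difference is that you handle both cases uniformly through the single computation $[A,C]=b^{-2\eps(\Delta)}$ before specializing $\eps(\Delta)$.
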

\begin{proof}
In the case $\eps(\Delta) =0$ denote
$a'=a^{x_v}b^{\rho(\Delta)}c^{z_v}$, $b'=b^l(\Delta)$ and
$c'=a^{x_u}c^{z_u}$. Direct verification shows that the relations
$c'a'(c')^{-1}(a')^{-1}=c'b'(c')^{-1}b'^{-1}=e$ and
$a'b'(a')^{-1}b'=e$ holds. Further we call this relations {\em the
proper relations of the subgroup $\Delta$}. Thus the map $a \to a',
\,\, b \to b',\,\, c \to c'$ can be extended to an epimorphism
$\pi_1(\mathcal{B}_1) \to \Delta $. To prove that this epimorphism
is really an isomorphism we need to show that each relation in
$\Delta$ is a corollary of proper relations. We call a relation,
that is not a corollary of proper relations an {\em improper
relation}.

Assume the contrary. Since in $\Delta$ the proper relations holds,
each element can be represented in the canonical form, given by
\Cref{prop2} in terms of $a',b',c'$, by using just the proper
relations. I.e. each element $g$ can be represented as
$$
g=a'^{x}b'^{y}c'^{z}.
$$
If there is an improper relation then there is an equality
\begin{equation}\label{absurdum1}
a'^{p}b'^{q}c'^{r}=a'^{p'}b'^{q'}c'^{r'}
\end{equation}
where at least one of the inequalities $p\neq p'$, $q\neq q'$,
$r\neq r'$ holds. Substitute $a'=a^{x_v}b^{\rho(\Delta)}c^{z_v}$,
$b'=b^l(\Delta)$ and $c'=a^{x_u}c^{z_u}$ to \ref{absurdum1} and
apply the homomorphism $\phi$ to both left and right parts. We get
\begin{equation}\label{absurdum2}
\left\{
\begin{aligned}
px_v+rx_u= p'x_v+r'x_u\\
pz_v+rz_u= p'z_v+r'z_u \\
\end{aligned} \right.
\end{equation}
Since the vectors $\overline{v}=(x_v,z_v)$ and
$\overline{u}=(x_u,z_u)$ generate a subgroup of finite index in
$\ZZ^2$, the matrix $\begin{pmatrix} x_v & z_v \\ x_u &
z_u\end{pmatrix}$ is nonsingular. Thus \ref{absurdum2} implies
$p=p'$ and $r=r'$. That means $q\neq q'$.

So \ref{absurdum1} can be simplified to $b^{l(\Delta)(q-q')}=e$,
which is a contradiction since $\pi_1(\mathcal{B}_1)$ have no
elements of finite order.

In the second case denote $\alpha'=a^{x_v}b^{\rho}c^{z_v}$,
$\beta'=b^l$ and $\gamma'=a^{x_u}b^{-\eps}c^{z_u}$.  Direct
verification shows that the relations
$\alpha'\beta'(\alpha')^{-1}=(\beta')^{-1}$,
$\gamma'\beta'(\gamma')^{-1}=\beta'$ and
$\alpha'\gamma'(\alpha')^{-1}(\gamma')^{-1}=\beta'$ holds. Thus the
map $\alpha \to \alpha', \,\, \beta \to \beta',\,\, \gamma \to
\gamma'$ can be extended to an isomorphism $\pi_1(\mathcal{B}_1) \to
\Delta$. The proof is analogous to the previous case.
\end{proof}

{\bf Remark.} The groups $Pc$ and $Cc$ are not isomorphic since they
have different homologies: $H_1(Pc)=\ZZ_2\times\ZZ^2$ and
$H_1(Cc)=\ZZ^2$, see \cite{Wolf} or \cite{Conway}.

\subsection{The proof of Theorem 1}

Proceed to the proof of \Cref{th1}. First of all we show that there
exist only 3 types of subgroups in $\pi_{1}(\mathcal{B}_{1})$.
Consider a subgroup $\Delta$ of index $n$ in
$\pi_{1}(\mathcal{B}_{1})$. Then either $\phi(\Delta) \leqslant
\{(2p,q\mid p\in\ZZ, \,q\in\ZZ )\}$ or $\phi(\Delta) \nleqslant
\{(2p,q\mid p\in\ZZ, \,q\in\ZZ )\}$. In the first case the
\Cref{prop4} states that $\Delta$ is abelian and $\Delta \leqslant
\langle a^2,b,c \rangle$, thus  the group $\Delta \cong \ZZ^3$ as a
subgroup of finite index in $\ZZ^3$.

If $\phi(\Delta) \nleqslant \{(2p,q\mid p\in\ZZ, \,q\in\ZZ )\}$ then
$\Delta$ is bijectively determined by an $n$-essential 4-plet
$(l(\Delta),\phi(\Delta),\rho(\Delta),\eps(\Delta))$ in virtue of
\Cref{prop12}. Recall that $2\eps(\Delta) \equiv 0 \mod l(\Delta)$.
Thus there are only two cases: $\eps(\Delta)=0$ and
$\eps(\Delta)=l(\Delta)/2$ (the latter one is possible only if
$l(\Delta)$ is even).

In case $\eps(\Delta)=0$ \Cref{prop13} claims that $\Delta \cong
Pc$. In case $\eps(\Delta)=l(\Delta)/2$ \cref{prop13} yields $\Delta
\cong Cc$. Thus we proved that $\Delta$ is isimorphic to one of the
groups $Pc$, $\ZZ^3$ and $Cc$, and the latter two cases are possible
only if $n$ is even. Consider all three cases separately.

{\bf Case (i).} The number $s_{\ZZ^3, \pi_1(\mathcal{B}_{1})}(n)$ is
calculated in \Cref{prop5}.

{\bf Case (ii).} To find the number of subgroups, isomorphic to
$\pi_1(\mathcal{B}_{1})$ by Propositions \ref{prop12} and
\ref{prop13} we need to calculate the cardinality of the set of
$n$-essential 4-plets with $\eps(\Delta)=0$, i.e.
$$
\{(l(\Delta),\phi(\Delta),\rho(\Delta),0)\mid
(l(\Delta),\phi(\Delta),\rho(\Delta),0) \,\,\mbox{is an
$n$-essential 4-plet}\}.
$$
Keeping in mind the definition of an $n$-essential 4-plet we see
that $l(\Delta)$ is an arbitrary factor of $n$. The amount of
possible $\phi(\Delta)$ depending of $l(\Delta)$ may be calculated
the following way. By definition of $n$-essential 4-plet
$\phi(\Delta) \leqslant \ZZ^2, \phi(\Delta)\nleqslant \{(2p,q\mid
p\in\ZZ, \,q\in\ZZ )\}\,\, \mbox{and} \,\, \big[\ZZ^2 : \phi(\Delta)
\big]=n/l(\Delta)$. The total amount of $\phi(\Delta)$, such that
$\big[\ZZ^2 : \phi(\Delta) \big]=n/l(\Delta)$ is
$\sigma_1(\frac{n}{l(\Delta)})$, (see \cite{LisM} Corollary 4.4).
Analogously the amount of $\phi(\Delta)$, such that
$\phi(\Delta)\leqslant \{(2p,q\mid p\in\ZZ, \,q\in\ZZ )\}\,\,
\mbox{and} \,\, \big[\ZZ^2 : \phi(\Delta) \big]=n/l(\Delta)$ is
$\sigma_1(\frac{n}{2l(\Delta)})$. Thus amount of required
$\phi(\Delta)$ is
$\sigma_1(\frac{n}{l(\Delta)})-\sigma_1(\frac{n}{2l(\Delta)})$. The
amount of possible $\rho(\Delta)$ does not depends on a choice of
$\phi(\Delta)$ and equals $\l(\Delta)$. Thus for every fixed value
of $\l(\Delta)$ the amount of $n$-essential 4-plets with this
$\l(\Delta)$ and $\eps(\Delta)=0$ is
$\big(\sigma_1(\frac{n}{l(\Delta)})-\sigma_1(\frac{n}{2l(\Delta)})\big)l(\Delta)$.
Summing this amount over all possible values of $l(\Delta)$ we get
$$
\sum_{l \mid n}
(\sigma_1(\frac{n}{l})-\sigma_1(\frac{n}{2l}))l=\sum_{l \mid n}
\sigma_1(\frac{n}{l})l-\sum_{2l \mid n}\sigma_1(\frac{n}{2l})l.
$$

{\bf Case (iii).} Arguing similarly we get that the amount of
subgroups, isomorphic to $Cc$ is
$$
\sum_{2l \mid n} 2l\sigma_1(\frac{n}{2l})-\sum_{4l \mid
n}2l\sigma_1(\frac{n}{4l}).
$$

\subsection{The total number of subgroups of index $n$ in $\pi_1(\mathcal{B}_{1})$}\label{lulz}
As an immediate consequence of \Cref{th1} we get
$$
s_{\pi_1(\mathcal{B}_{1})}(n)=s_{\ZZ^3,
\pi_1(\mathcal{B}_{1})}(n)+s_{\pi_1(\mathcal{B}_{1}),
\pi_1(\mathcal{B}_{1})}(n) + s_{\pi_1(\mathcal{B}_{2}),
\pi_1(\mathcal{B}_{1})}(n).
$$
By the way there are at least two different considerations leading
to this result, we present them here.

\begin{proposition}

    \begin{equation}\label{mn1}
    s_{\pi_1(\mathcal{B}_{1})}(n) = \sum_{m\, |\, n} m \,c_{\pi_1(\mathcal{K})}(m) ,
        \end{equation}
        where
 \begin{equation}\label{um}
c_{\pi_1(\mathcal{K})}(m) = \left\{
\begin{array}{rcl}
\sigma_{0}(m), \,\text{if} \ \,  m \, is \,\, \text{odd}, \\
\frac{3}{2}\sigma_{0}(m) + \frac{1}{2} \sum_{d\, |\, \frac{m}{2}} (d-1), \,\, \text{if}\,\, \ m \,\, is \,\, \text{even}.\\
\end{array}
\right.
\end{equation}
\end{proposition}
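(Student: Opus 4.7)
The approach splits into two stages, reflecting the decomposition $\pi_1(\mathcal{B}_1) = \Lambda \times \mathbb{Z}$ with $\Lambda = \pi_1(\mathcal{K})$. Formula \eqref{mn1} will follow from a general counting identity relating subgroups of $G \times \mathbb{Z}$ to conjugacy classes of subgroups of $G$, applied to $G = \Lambda$; formula \eqref{um} will be obtained by a direct analysis of the Klein bottle group, modeled on the methods of Section~\ref{half1}.

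For \eqref{mn1}, I would establish a bijection between finite-index subgroups $H \leq G \times \mathbb{Z}$ and triples $(K, m, g_0 K)$, where $K = H \cap (G \times \{0\})$, $m \geq 1$ generates the image of $H$ under the second projection, and $g_0 \in N_G(K)$ is any element with $(g_0, m) \in H$, well-defined modulo $K$. The condition $g_0 \in N_G(K)$ follows from $(g_0, m)(k, 0)(g_0, m)^{-1} = (g_0 k g_0^{-1}, 0) \in H$, and the index relation $[G:K] \cdot m = n$ comes from counting cosets via the projection. This yields
\[
s_{G \times \mathbb{Z}}(n) \;=\; \sum_{m \mid n}\; \sum_{[G:K]\,=\,n/m} [N_G(K) : K].
\]
Grouping $K$ by $G$-conjugacy class and applying orbit--stabilizer, each class $[K_0]$ contributes $[G : N_G(K_0)] \cdot [N_G(K_0) : K_0] = [G : K_0] = n/m$, so the inner sum equals $(n/m)\,c_G(n/m)$. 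After reindexing $m \leftrightarrow n/m$, this reads $s_{G \times \mathbb{Z}}(n) = \sum_{m \mid n} m\,c_G(m)$; specializing to $G = \Lambda$ yields \eqref{mn1}.

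For \eqref{um} I would classify subgroups of $\Lambda = \langle a, b \mid bab^{-1} = a^{-1}\rangle$ using the normal form $a^x b^y$, whose multiplication rule $a^x b^y \cdot a^{x'} b^{y'} = a^{x + (-1)^y x'}\, b^{y + y'}$ parallels that of \Cref{prop2}. Each index-$m$ subgroup $H$ corresponds bijectively to a triple $(l, d, t)$: here $l$ is the minimal positive exponent with $a^l \in H$, $d$ generates the image of $H$ under $\Lambda \twoheadrightarrow \Lambda / \langle a \rangle \cong \mathbb{Z}$ (so $ld = m$), and $t \in \mathbb{Z}/l$ is the unique residue with $a^t b^d \in H$. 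The main obstacle is the passage to conjugacy classes. A direct computation gives $(a^s b^r)(a^x b^y)(a^s b^r)^{-1} = a^{s(1-(-1)^y) + (-1)^r x}\, b^y$, so the induced action on triples is $(l, d, t) \mapsto (l, d, (-1)^r t)$ when $d$ is even and $(l, d, t) \mapsto (l, d, (-1)^r t + 2s)$ when $d$ is odd. A four-way parity split on $(l, d)$ then produces the orbit count per pair with $ld = m$: one orbit when $l, d$ are both odd; two orbits when $l$ is even and $d$ odd; $(l+1)/2$ orbits when $l$ is odd and $d$ even (the fixed point $t = 0$ together with pairs $\{t, -t\}$); and $l/2 + 1$ orbits when both $l, d$ are even (the two fixed points $0, l/2$ together with pairs). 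Summing over decompositions $ld = m$ and collecting by the parity of $m$ collapses to $\sigma_0(m)$ in the odd case and, after a routine rearrangement of divisor sums, to $\tfrac{3}{2}\sigma_0(m) + \tfrac{1}{2}\sum_{d \mid m/2}(d-1)$ in the even case, completing \eqref{um}.
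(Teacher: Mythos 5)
Your proposal is correct, and it reaches the conclusion along the same skeleton as the paper --- the decomposition $\pi_1(\mathcal{B}_1)=\pi_1(\mathcal{K})\times\ZZ$, the product identity \eqref{mn1}, and the Klein-bottle count \eqref{um} --- but where the paper simply cites both ingredients (it quotes Stanley's identity $s_{G\times\ZZ}(n)=\sum_{m\mid n} m\,c_G(m)$ from \cite{Stan}, Eq.~5.125, and takes \eqref{um} ready-made from Theorem~2 of \cite{MednKlein}, then substitutes), you prove them from scratch. Your proof of the Stanley identity is sound: the correspondence $H\leftrightarrow (K,m,g_0K)$ with $K=H\cap(G\times\{0\})$, $g_0\in N_G(K)$ is a genuine bijection, the index relation $[G:K]\,m=n$ holds, and the orbit--stabilizer step $[G:N_G(K_0)]\,[N_G(K_0):K_0]=[G:K_0]=n/m$ correctly converts the sum over subgroups into $(n/m)\,c_G(n/m)$. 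Your Klein-bottle analysis is also correct: the triples $(l,d,t)$ with $ld=m$, $t\in\ZZ/l$ do parametrize index-$m$ subgroups of $\Lambda$, the conjugation action is $t\mapsto(-1)^rt$ for $d$ even and $t\mapsto(-1)^rt+2s$ for $d$ odd, the four parity cases give $1$, $2$, $(l+1)/2$, $l/2+1$ orbits, and the divisor-sum rearrangement does collapse to \eqref{um} (the key identity being $\sigma_0(m)=\sigma_0(m/2)+\sigma_0(u)$ for even $m$ with odd part $u$). What your route buys is a self-contained argument whose $(l,d,t)$-invariants are the exact Klein-bottle analogue of the 4-plet machinery of Section~\ref{half1}, so the proposition no longer rests on external references; what the paper's route buys is brevity, reducing the proof to two citations and a substitution.
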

\begin{proof}
In \cite{Stan} p.~112, in Equation 5.125, Stanley proves that if $G$
is a finitely generated group then \footnote{In Stanley notations:
$j_{G}(n)=c_{G}(n)$ and $u_{G}(n)=s_{G}(n)$.}

\begin{equation}\label{stanly}
s_{G \times \ZZ}(n) = \sum_{m\, |\, n} m \,c_{G}(m) .
\end{equation}
The formula \ref{um} is proven in \cite{MednKlein} (see Theorem 2).
Since $\pi_1(\mathcal{B}_{1})=\pi_1(\mathcal{K}) \times \ZZ$, to
finish the proof substitute \ref{um} to \ref{stanly}.
\end{proof}

 \bigskip

Some simple calculations, omitted here, show that the expressions
for $s_{ \pi_1(\mathcal{B}_{1})}(n)$, obtained by \Cref{th1} and by
\Cref{lulz} are equal.

A sequence $s_{\pi_1(\mathcal{B}_{1})}(n)$  coincides with the
sequence  A027844 in the  'On-Line Encyclopedia of Integer
Sequences' (\cite{Ency}).

\begin{remark} Note that the other formula for  $s_{\pi_1(\mathcal{B}_{1})}(n)$ was
obtained by different method by M.N.Shmatkov in PhD thesis
\cite{Shm} (see p.~150--151).
\end{remark}

\subsection{The proof Theorem 2}\label{lulz2}
To obtain the total number of $n$-coverings over  $\mathcal{B}_{1}$
we use following theorem from \cite{Medn}:
\begin{theorem*}[Mednykh]\label{teor_vse}
Let  $\Gamma$ be  a finitely generated group. Then the number of
conjugated classes of subgroups of index  $n$ in the \mbox{group
$\Gamma$,} is given by the formula
$$ c_{\Gamma}(n)=\frac{1}{n}\sum_{\substack{l | n \\ l m = n}}\, \sum_{K <_{m}\Gamma }\,|Epi(K,\ZZ_{l})|,$$
where the sum $  \sum_{K<_{m}\Gamma }$ is taken over all subgroups
$K$ of index $m$ in the group $\Gamma$ and $Epi(K,\ZZ_{l})$ is the
set of epimorphisms of the group $K$  onto the cyclic group
$\ZZ_{l}$ of order~$l$.
\end{theorem*}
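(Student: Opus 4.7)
My plan is to derive Mednykh's formula by combining an orbit-counting argument for the conjugation action of $\Gamma$ on the set of its index-$n$ subgroups with the elementary identity $|G|=\sum_{C\leqslant G,\,C\text{ cyclic}}\varphi(|C|)$, valid for every finite group $G$.

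First, I would let $\Gamma$ act on the set $X_n$ of all index-$n$ subgroups by conjugation. The stabilizer of $\Delta\in X_n$ is its normalizer $N_\Gamma(\Delta)$, so the $\Gamma$-orbit of $\Delta$ has size $[\Gamma:N_\Gamma(\Delta)]=n/[N_\Gamma(\Delta):\Delta]$. Since $\Gamma$-orbits on $X_n$ are precisely the conjugacy classes of index-$n$ subgroups, summing the reciprocal orbit sizes over $\Delta\in X_n$ counts the orbits; rearranged, this gives
$$
n\cdot c_\Gamma(n)\;=\;\sum_{\Delta\leqslant_n\Gamma}[N_\Gamma(\Delta):\Delta].
$$

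Next, I would decompose each factor $[N_\Gamma(\Delta):\Delta]=|N_\Gamma(\Delta)/\Delta|$ using the cyclic-subgroup identity above (each $g\in G$ generates a unique cyclic $\langle g\rangle$, and a cyclic subgroup of order $l$ has $\varphi(l)$ generators). The cyclic subgroups of $N_\Gamma(\Delta)/\Delta$ pull back under the quotient map bijectively to subgroups $K$ satisfying $\Delta\trianglelefteq K\leqslant N_\Gamma(\Delta)$ with $K/\Delta$ cyclic, so
$$
n\cdot c_\Gamma(n)\;=\;\sum_{\Delta\leqslant_n\Gamma}\;\sum_{\substack{\Delta\,\trianglelefteq\, K\leqslant\Gamma\\ K/\Delta\text{ cyclic}}}\varphi([K:\Delta]).
$$

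Now I would swap the order of summation. A pair $(K,\Delta)$ with $\Delta\trianglelefteq K\leqslant\Gamma$, $K/\Delta$ cyclic of order $l$, and $[\Gamma:\Delta]=n$ is exactly the datum of a subgroup $K\leqslant\Gamma$ of index $m=n/l$ together with a normal subgroup $\Delta$ of $K$ whose quotient is cyclic of order $l$. For each such $K$ and $l$, every admissible $\Delta$ is the kernel of precisely $\varphi(l)$ surjective homomorphisms $K\to\ZZ_l$ (one per choice of generator of $K/\Delta\cong\ZZ_l$), so the joint contribution of $K$ and $l$ equals $|\Epi(K,\ZZ_l)|$. Regrouping the outer sum by divisors $l\mid n$ and by subgroups $K$ of index $m=n/l$ yields
$$
n\cdot c_\Gamma(n)\;=\;\sum_{\substack{l\mid n\\ lm=n}}\;\sum_{K<_m\Gamma}|\Epi(K,\ZZ_l)|,
$$
which is the stated formula after dividing by $n$.

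The main technical delicacy is the bookkeeping in the final re-summation: one must verify that every admissible pair $(K,\Delta)$ in the double sum is counted with exactly the multiplicity $\varphi(l)$ captured by the identity $|\Epi(K,\ZZ_l)|=\varphi(l)\cdot\#\{N\trianglelefteq K:K/N\cong\ZZ_l\}$, and that no extra conditions on $K$ (such as $K\leqslant N_\Gamma(\Delta)$) shrink the range of summation, since $\Delta\trianglelefteq K$ forces $K\leqslant N_\Gamma(\Delta)$ automatically. Finite generation of $\Gamma$ ensures that the set of index-$m$ subgroups is finite for each $m\mid n$, so every sum appearing above is finite and the manipulations are legitimate.
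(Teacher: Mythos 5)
Your proof is correct and complete: the orbit-counting identity $n\,c_\Gamma(n)=\sum_{\Delta\leqslant_n\Gamma}[N_\Gamma(\Delta):\Delta]$, the decomposition of $|N_\Gamma(\Delta)/\Delta|$ over its cyclic subgroups via $|G|=\sum_{C\leqslant G\ \mathrm{cyclic}}\varphi(|C|)$, and the regrouping through $|\Epi(K,\ZZ_l)|=\varphi(l)\cdot\#\{\Delta\trianglelefteq K: K/\Delta\cong\ZZ_l\}$ are all valid, with finite generation of $\Gamma$ ensuring every sum is finite. Note that the paper itself gives no proof of this statement --- it quotes the theorem from \cite{Medn} --- and your argument is essentially the standard one from that reference, so there is nothing to reconcile.
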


Since \Cref{th1} classifies all subgroups of finite index in
$\pi_1(\mathcal{B}_{1})$, we just have to calculate
$|Epi(\ZZ^3,\ZZ_{l})|$, $|Epi(Pc,\ZZ_{l})|$ and $|Epi(Cc,\ZZ_{l})|$.

\begin{lemma}
\begin{itemize}\label{h1}
\item[(i)] $H_{1}(\ZZ^3, \ZZ) = \ZZ^{ 3 }$,

\item[(ii)] $H_{1}(Pc, \ZZ) =\ZZ_{2}\oplus\ZZ^{2}$,

\item[(iii)] $H_{1}(Cc, \ZZ) =\ZZ^{2}$,
\end{itemize}
where $H_{1}(\Delta, \ZZ)$ is a first homology group.
\end{lemma}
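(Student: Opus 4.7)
The plan is to use the standard identification $H_1(G,\mathbb{Z}) \cong G^{\mathrm{ab}} = G/[G,G]$ for any group $G$, and then to compute the abelianization directly from the presentations of the three groups already recorded in Section~3 (equations \ref{fundplus3}, \ref{fundminus3}, and \ref{fundc1}). So the whole argument reduces to three small abelian-group presentation computations; the strategy for each is to pass to additive notation, observe that every commutator relation becomes trivial, and read off what remains.

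For part (i), $\ZZ^3$ is already abelian, so its abelianization is itself and $H_1(\ZZ^3,\ZZ)=\ZZ^3$, which I would state in one sentence.

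For part (ii), I would start from the presentation $Pc=\langle a,b,c : cac^{-1}a^{-1}=cbc^{-1}b^{-1}=1,\; aba^{-1}b=1\rangle$. Under abelianization the first two relations express that $c$ commutes with $a$ and $b$, so they become trivial in $Pc^{\mathrm{ab}}$. The third relation $aba^{-1}b=1$ becomes $2b=0$ in additive notation. Hence $Pc^{\mathrm{ab}}=\langle a,c\rangle\oplus\langle b\mid 2b=0\rangle\cong\ZZ^2\oplus\ZZ_2$, which is exactly (ii).

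For part (iii), I would analogously abelianize $Cc=\langle \alpha,\beta,\gamma : \gamma\beta\gamma^{-1}=\alpha\gamma\alpha^{-1}\gamma^{-1}=\beta,\; \alpha\beta\alpha^{-1}=\beta^{-1}\rangle$. In additive notation the relation $\gamma\beta\gamma^{-1}=\beta$ is tautological; the relation $\alpha\beta\alpha^{-1}=\beta^{-1}$ becomes $\beta=-\beta$, i.e.\ $2\beta=0$; and the relation $\alpha\gamma\alpha^{-1}\gamma^{-1}=\beta$ becomes $0=\beta$, so $\beta=0$ (which absorbs the previous $2\beta=0$). Thus $Cc^{\mathrm{ab}}=\langle \alpha,\gamma\rangle\cong \ZZ^2$, giving (iii). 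There is no real obstacle here; the only point requiring a line of care is observing that the relation $\alpha\gamma\alpha^{-1}\gamma^{-1}=\beta$ in $Cc$ already forces $\beta$ to die in the abelianization, making the torsion one might naively expect from $\alpha\beta\alpha^{-1}=\beta^{-1}$ disappear. Alternatively, one can simply cite \cite{Wolf} or \cite{Conway}, where these homology groups are tabulated.
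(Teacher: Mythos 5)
Your computation is correct: all three abelianizations follow exactly as you describe from the presentations (\ref{fundc1}), (\ref{fundplus3}) and (\ref{fundminus3}), and in particular your observation that the relation $\alpha\gamma\alpha^{-1}\gamma^{-1}=\beta$ kills $\beta$ outright in $Cc^{\mathrm{ab}}$ (so that no $\ZZ_2$ survives) is the one point where care is needed, and you handle it properly. The route is, however, different from the paper's: the paper does not compute anything, but simply cites Section~7 of \cite{Conway}, where these homology groups are tabulated (your closing sentence offering the citation \cite{Wolf} or \cite{Conway} as an alternative is in fact the paper's entire proof). What your approach buys is self-containedness — the identification $H_1(G,\ZZ)\cong G/[G,G]$ plus a three-line Tietze-style reduction of each presentation, using only data already displayed in the paper — at the cost of a few extra lines; the paper's citation is shorter but leaves the reader to consult an external table. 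Either is acceptable, and your direct argument arguably fits the paper better, since the presentations it relies on are the ones the rest of the paper works with.
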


\begin{proof}
See \cite{Conway} section 7.
\end{proof}

The previous lemma and Lemma 4 of \mbox{\cite{Medn}} yield the
following result.
\begin{lemma} \label{epi}
We have
\begin{itemize}
\item[(i)]  $|Epi(\ZZ^3,\ZZ_{l})| = \sum_{\substack{d | l }}\,\mu \left( \frac{l}{d}\right)d^{3} : = \varphi_{3} (l)$,

\item[(ii)] $|Epi(Pc,\ZZ_{l})|=\sum_{\substack{ d | l}}\,\mu \left( \frac{l}{d}\right)(2,d) d^{2}$,

\item[(iii)]  $|Epi(Cc,\ZZ_{l})| = \sum_{\substack{d | l }}\,\mu \left( \frac{l}{d}\right)d^{2} : = \varphi_{2} (l)$,
\end{itemize}
 where  $\mu (n)$ is the M\"{o}bius function, $\varphi_{3} (l)$ and  $\varphi_{2} (l)$ are Jordan totient functions,$(2,d)$ is a greater common divisor of numbers  $2$ and $d$ .
\end{lemma}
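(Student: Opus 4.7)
\textbf{Proof plan for \Cref{epi}.} The main observation is that any homomorphism from a group $G$ to an abelian target factors through the abelianization $G^{ab}=H_1(G,\ZZ)$, and this factorization gives a bijection $\mathrm{Hom}(G,\ZZ_l)\cong \mathrm{Hom}(H_1(G,\ZZ),\ZZ_l)$ that preserves surjectivity. Hence by \Cref{h1} it suffices to count the epimorphisms of $\ZZ^{3}$, $\ZZ_{2}\oplus\ZZ^{2}$ and $\ZZ^{2}$, respectively, onto $\ZZ_l$. This reduction is the content of Lemma~4 of \cite{Medn}; applying it is the first step.

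The second step is to count, for each of the three abelianizations $A$, the total number $h_A(d)=|\mathrm{Hom}(A,\ZZ_d)|$ of homomorphisms into $\ZZ_d$ for every divisor $d$ of $l$. A homomorphism is determined by the images of generators, with each free generator contributing a factor $d$ and each cyclic summand $\ZZ_k$ contributing a factor $(k,d)$. This gives immediately
\[
h_{\ZZ^{3}}(d)=d^{3},\qquad h_{\ZZ_{2}\oplus\ZZ^{2}}(d)=(2,d)\,d^{2},\qquad h_{\ZZ^{2}}(d)=d^{2}.
\]

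The third step is a standard Möbius inversion. Every homomorphism $A\to\ZZ_l$ is an epimorphism onto its image, which is the unique subgroup $\ZZ_d\le\ZZ_l$ for some $d\mid l$, so
\[
h_A(l)=\sum_{d\mid l}|\mathrm{Epi}(A,\ZZ_d)|,
\]
whence $|\mathrm{Epi}(A,\ZZ_l)|=\sum_{d\mid l}\mu(l/d)\,h_A(d)$. Substituting the three values of $h_A(d)$ from the previous step yields formulas (i), (ii), (iii) exactly as stated, where the first and third sums are the Jordan totients $\varphi_{3}(l)$ and $\varphi_{2}(l)$ by definition.

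There is no real obstacle: every ingredient—the abelianization reduction, the formula for the number of maps out of a finitely generated abelian group into a cyclic group, and the Möbius inversion over divisors of $l$—is standard or explicitly available from the cited sources. The only mildly delicate point is the middle case, where one must observe that a $\ZZ_2$ summand contributes the factor $(2,d)$ rather than $d$; this is why the factor $(2,d)$ appears in (ii) and reflects the torsion in $H_1(Pc)=\ZZ_2\oplus\ZZ^2$ as opposed to the torsion-free homology of $\ZZ^{3}$ and $Cc$.
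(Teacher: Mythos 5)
Your proposal is correct and follows essentially the same route as the paper: the paper simply cites the homology computation of \Cref{h1} together with Lemma~4 of \cite{Medn}, and that cited lemma encapsulates exactly the argument you spell out (reduction to the abelianization, counting $\mathrm{Hom}(A,\ZZ_d)$, and M\"obius inversion over divisors of $l$). Your write-up just makes the content of the citation explicit, including the correct $(2,d)$ factor coming from the $\ZZ_2$ torsion in $H_1(Pc)$.
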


Substituting the formulas from \Cref{epi} to Mednykh's Theorem we
get the statement of \Cref{th1.2}.

\subsection{The proof Theorem 3}

The isomorphism types of subgroups are already provided by
\Cref{th1}. Thus we have to calculate the number of conjugacy
classes for each type separately.

\begin{lemma}
$$
c_{\ZZ^3,\pi_{1}(\mathcal{B}_{1})}(n) = \frac{1}{2}\sum_{2l \mid n}
\sum_{m \mid \frac{n}{2l}} \Big(l^2 + \frac{5}{2}
+\frac{3}{2}(-1)^l\Big)m.
$$
\end{lemma}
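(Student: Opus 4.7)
The plan is to apply Burnside's lemma to the conjugation action of $\pi_1(\mathcal{B}_1)$ on its $\ZZ^3$-isomorphic subgroups of index $n$. By \Cref{prop4}, every such $\Delta$ lies in $A := \langle a^2, b, c\rangle \cong \ZZ^3$, which is normal of index $2$ in $\pi_1(\mathcal{B}_1)$, so $n$ must be even and $\Delta$ has index $m := n/2$ in $A$. Since $A$ is abelian, it acts trivially on its sublattices, so conjugation factors through $\pi_1(\mathcal{B}_1)/A \cong \ZZ_2$, generated by the image of $a$. The defining relations \ref{fundplus3} give $aba^{-1} = b^{-1}$, $aca^{-1} = c$, $a\cdot a^2 \cdot a^{-1} = a^2$; identifying $A$ with $\ZZ^3$ via $(x,y,z) \mapsto (a^2)^x b^y c^z$, conjugation by $a$ becomes the involution $\tau(x,y,z) = (x,-y,z)$. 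Burnside then yields
\begin{equation*}
c_{\ZZ^3,\pi_1(\mathcal{B}_1)}(n) = \tfrac{1}{2}\bigl(s_{\ZZ^3,\pi_1(\mathcal{B}_1)}(n) + F(m)\bigr),
\end{equation*}
where $F(m)$ is the number of $\tau$-invariant sublattices of $A$ of index $m$.

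To compute $F(m)$, I would parameterize each sublattice $\Delta \leqslant A$ by the pair $(L, K)$, where $L \leqslant \ZZ^2$ is the image of $\Delta$ under the projection onto the $(x,z)$-coordinates and $K = \Delta \cap (\{0\} \times \ZZ \times \{0\}) = l\ZZ$, with $[\ZZ^2 : L] \cdot l = m$. The subgroup $K$ is automatically $\tau$-invariant. For a basis $v_1, v_2$ of $L$, a sublattice $\Delta$ with these invariants is determined by the $y$-coordinates $y_1, y_2$ of lifts $\tilde v_i \in \Delta$ taken modulo $l$; the $\tau$-invariance condition $\tau(\tilde v_i) \in \Delta$ is equivalent to $2y_i \equiv 0 \pmod l$, giving $\gcd(2,l)$ admissible residues per generator, independently. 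Using the classical count $\sigma_1(k)$ for index-$k$ sublattices of $\ZZ^2$, this yields
\begin{equation*}
F(m) = \sum_{kl = m} \sigma_1(k)\,\gcd(2,l)^2.
\end{equation*}

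The same $(L,K)$-parameterization without the $\tau$-restriction gives $s_{\ZZ^3, \pi_1(\mathcal{B}_1)}(n) = \sum_{kl = m} \sigma_1(k)\, l^2$, which reconciles with \Cref{prop5} via the identity $\sum_{l \mid m} l\,\sigma_1(l) = \sum_{l \mid m} l^2\,\sigma_1(m/l)$. Adding the two sums, noting that $\gcd(2,l)^2 = \tfrac{5 + 3(-1)^l}{2}$, and expanding $\sigma_1(m/l) = \sum_{M \mid m/l} M$ with $m = n/2$ (so $l \mid m$ becomes $2l \mid n$ and $M \mid m/l$ becomes $M \mid n/(2l)$), I obtain exactly the stated formula. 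The main technical step is the $(L,K)$-parameterization together with the verification that the lifting constraint collapses precisely to $l \mid 2y_i$; once this is secured, the remainder is purely arithmetic bookkeeping.
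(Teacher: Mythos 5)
Your proposal is correct and follows essentially the same route as the paper: the paper also reduces to the two-element conjugacy classes $\{\Delta,\Delta^a\}$ (your Burnside count over $\pi_1(\mathcal{B}_1)/\langle a^2,b,c\rangle\cong\ZZ_2$ is the same argument), uses the same parameterization of sublattices by $(l,\phi(\Delta),y_v,y_u)$, and identifies the $a$-fixed subgroups by the condition $2y_i\equiv 0 \pmod{l}$, giving $1$ or $4$ fixed choices according to the parity of $l$. The only cosmetic difference is your form of the subgroup count $\sum_{kl=m}\sigma_1(k)l^2$, which you correctly reconcile with \Cref{prop5}.
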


\begin{proof}
Let $\Delta$ be a subgroup of index $n$ in $\pi_1(\mathcal{B}_{1})$,
isomorphic to $\ZZ^3$. Then $\Delta \leqslant \langle
a^2,b,c\rangle$ by \Cref{prop4}. Thus
$\Delta^{a^2}=\Delta^{b}=\Delta^{c}=\Delta$, so the conjugacy class
of $\Delta$ in $\pi_{1}(\mathcal{B}_{1})$ contains at most two
subgroups: $\Delta$ and $\Delta^a$. Thus we have to find out whether
$\Delta=\Delta^a$.

The arguments, analogous to \Cref{prop12} shows that there is a
bijection between the set of isomorphic to $\ZZ^3$ subgroups $\Delta
\leqslant \pi_{1}(\mathcal{B}_{1})$ and the setb 4-plets
$(l(\Delta),\phi(\Delta),y_v(\Delta),y_u(\Delta))$, such that
\begin{itemize}
\item[(i)]  $l(\Delta)$ is a positive divisor of $n$,
\item[(ii)] $\phi(\Delta) \leqslant \langle a^2,b,c \rangle$ and $[\langle a^2,b,c \rangle: \phi(\Delta)]=\frac{n}{2l(\Delta)}$,
\item[(iii)] $y_v(\Delta)$ and $y_u(\Delta)$ are arbitrary residues
modulo $l(\Delta)$.
\end{itemize}

Obviously, $l(\Delta)=l(\Delta^a)$, $\phi(\Delta)=\phi(\Delta^a)$,
$y_v(\Delta)=-y_v(\Delta^a)$, $y_u(\Delta)=-y_u(\Delta^a)$. Thus we
have to find the number of pairs $(y_v,y_u)$ of residues modulo
$l(\Delta)$, such that $y_v=-y_v$ and $y_u=-y_u$. If $l(\Delta)$ is
odd there is 1 pair $(0,0)$, if $l(\Delta)$ is even there are 4
pairs: $(0,0)$, $(0,l(\Delta)/2)$, $(l(\Delta)/2,0)$ and
$(l(\Delta)/2,l(\Delta)/2)$.
\end{proof}

\begin{lemma}\label{part2for-th3}
$$
c_{\pi_{1}(\mathcal{B}_{1}),\pi_{1}(\mathcal{B}_{1})}(n)=\sum_{l
\mid n} \Big(\frac{3}{2}+\frac{1}{2}(-1)^l\Big)\sigma_1(\frac{n}{l})
- \sum_{2l \mid n}
\Big(\frac{3}{2}+\frac{1}{2}(-1)^l\Big)\sigma_1(\frac{n}{2l}).
$$
\end{lemma}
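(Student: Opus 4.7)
The plan is to parametrize the subgroups of type $Pc$ via \Cref{prop12} and \Cref{prop13} and then count orbits of the conjugation action on their invariants. By those two propositions, the subgroups of index $n$ in $\pi_1(\mathcal{B}_1)$ isomorphic to $Pc$ are in bijection with $n$-essential 4-plets of the form $(l(\Delta),\phi(\Delta),\rho(\Delta),0)$, so I need to count orbits of $\pi_1(\mathcal{B}_1)$ acting by conjugation on such 4-plets.

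First I would verify that $l(\Delta)$, $\phi(\Delta)$, and the vanishing of $\epsilon(\Delta)$ are conjugation-invariant. For $\phi$ this is automatic because the target is abelian; for $l$, a direct application of the multiplication rule \eqref{multlaw1} shows that $g^{-1}b^l g$ is always a power of $b$, so the minimal positive $l$ with $b^l\in\Delta$ is preserved; and $\epsilon(\Delta)=0$ is preserved because it encodes the isomorphism type $\Delta\cong Pc$, which is obviously invariant.

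Next I would compute how conjugation by an arbitrary $g=a^{x_0}b^{y_0}c^{z_0}$ acts on the remaining invariant $\rho(\Delta)$. A double application of \eqref{multlaw1} gives, for $d=a^{x}b^{y}c^{z}$, that $g d g^{-1}=a^{x}b^{y^{*}}c^{z}$ where $y^{*}=(-1)^{x_0}y$ if $x$ is even and $y^{*}=(-1)^{x_0}(y-2y_0)$ if $x$ is odd. Reading off the new value of $\nu(1,0)$ yields $\rho(\Delta^{g})\equiv (-1)^{x_0}(\rho(\Delta)-2y_0)\pmod{l(\Delta)}$. Thus the induced action on $\rho\in\ZZ/l\ZZ$ is generated by $\rho\mapsto\rho+2$ (conjugation by $b$) and $\rho\mapsto -\rho$ (conjugation by $a$), while $c$ acts trivially.

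The orbit count is then elementary: if $l$ is odd then $2$ is a unit modulo $l$ and the shift by $2$ alone is already transitive, giving one orbit; if $l$ is even, both generators preserve the parity of $\rho$, so the even and odd residues form two separate orbits. In either case the number of orbits equals $\tfrac{3}{2}+\tfrac{1}{2}(-1)^{l}$. Combining with the count $\sigma_1(n/l)-\sigma_1(n/(2l))$ of admissible $\phi(\Delta)$ for each fixed $l\mid n$ (already tabulated in the proof of \Cref{th1}(ii)) and summing over $l\mid n$ distributes the product precisely into the two sums of the stated formula. The main obstacle is the bookkeeping in the conjugation calculation for $\rho$, i.e.\ disentangling the twisted multiplication \eqref{multlaw1} through $g d g^{-1}$; once this is carried out, the orbit count and the final summation are formal.
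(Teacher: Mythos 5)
Your proposal is correct and follows essentially the same route as the paper: parametrize the $Pc$-subgroups by the $n$-essential 4-plets with $\eps(\Delta)=0$, observe that conjugation fixes $l(\Delta)$ and $\phi(\Delta)$ and acts on $\rho(\Delta)$ by $\rho\mapsto\rho\pm2$ and $\rho\mapsto-\rho$ (so the parity of $\rho$ is the only remaining invariant, giving $\tfrac32+\tfrac12(-1)^l$ classes), and then sum $\big(\sigma_1(\tfrac nl)-\sigma_1(\tfrac n{2l})\big)$ times that factor over $l\mid n$. Your explicit computation of $gdg^{-1}$ just makes precise the conjugation formulas the paper states for the generators $a$, $b$, $c$.
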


\begin{proof}
Let $\Delta$ be a subgroup of index $n$ in $\pi_1(\mathcal{B}_{1})$
isomorphic to $\pi_1(\mathcal{B}_{1})$. Recall that by \Cref{prop12}
there is a bijection between the set of isomorphic to
$\pi_{1}(\mathcal{B}_{1})$ subgroups $\Delta \leqslant
\pi_{1}(\mathcal{B}_{1})$ and the set of $n$-essential 4-plets
$(l(\Delta),\phi(\Delta),\rho(\Delta),\eps(\Delta))$.

Obviously, $l(\Delta)=l(\Delta^d)$ and $\phi(\Delta)=\phi(\Delta^d)$
for any $d \in  \pi_{1}(+a_{1})$. Also
$\rho(\Delta)=-\rho(\Delta^a)$, $\rho(\Delta)=\rho(\Delta^b)+2$ and
$\rho(\Delta)=\rho(\Delta^c)$. Thus the parity of $\rho(\Delta)$ is
the only invariant for a conjugacy class with fixed $l(\Delta)$ and
$\phi(\Delta)$.

Summarizing the above considerations. $l(\Delta)$ can be any
positive divisor of $n$, thus we get the sum over all divisors the
amount of corresponding pairs $(\phi(\Delta),\rho(\Delta)\mod 2)$.
The amount of all $\phi(\Delta)$, such that
$[\ZZ^2:\phi(\Delta)]=n/l(\Delta)$ is
$\sigma_1(\frac{n}{l(\Delta)})$ the amount of $\phi(\Delta)$, such
that $\phi(\Delta)$ is a subgroup of index $\frac{n}{2l(\Delta)}$ in
$\{(2p,q\mid p\in\ZZ, \,q\in\ZZ )\}$ is
$\sigma_1(\frac{n}{2l(\Delta)})$ (again, we consider
$\sigma_1(\frac{n}{2l(\Delta)})=0$ if $\frac{n}{2l(\Delta)}$ is not
integer). Thus the amount $\phi(\Delta)$, satisfying the condition
of $n$-essential 4-plet is $\sigma_1(\frac{n}{l(\Delta)}) -
\sigma_1(\frac{n}{2l(\Delta)})$. We multiply it by the amoumt of
possible parities of $\rho(\Delta)$, i.e. 2 for even $l(\Delta)$ and
1 for odd, in other words by
$\Big(\frac{3}{2}+\frac{1}{2}(-1)^{l(\Delta)}\Big)$. Thus we get
$$
\aligned &
c_{\pi_{1}(\mathcal{B}_{1}),\pi_{1}(\mathcal{B}_{1})}(n)=\sum_{l
\mid n}
\Big(\frac{3}{2}+\frac{1}{2}(-1)^l\Big)\Big(\sigma_1(\frac{n}{l})-\sigma_1(\frac{n}{2l})\Big)=
\\& = \sum_{l
\mid n} \Big(\frac{3}{2}+\frac{1}{2}(-1)^l\Big)\sigma_1(\frac{n}{l})
- \sum_{2l \mid n}
\Big(\frac{3}{2}+\frac{1}{2}(-1)^l\Big)\sigma_1(\frac{n}{2l})
\endaligned
$$
\end{proof}

\begin{lemma}
$$
c_{\pi_{1}(\mathcal{B}_{2}),\pi_{1}(\mathcal{B}_{1})}(n) =
2\Big(\sum_{2k \mid n}\sigma_1(\frac{n}{2k}) - \sum_{4k \mid
n}\sigma_1(\frac{n}{4k})\Big).
$$
\end{lemma}

\begin{proof}
The proof is analogous to \Cref{part2for-th3}.
\end{proof}

\section{On the coverings of $\mathcal{B}_{2}$}
Most of the statements and proofs in this section are similar to
corresponding parts of section \ref{half1}. The proofs are given
only in case of significant difference.

\subsection{The structure of the group $\pi_{1}(\mathcal{B}_{2})$}

The following proposition provides the canonical form of an element
in $\pi_{1}(\mathcal{B}_{2})$.

\begin{proposition}\label{prop2-2}
\begin{itemize}
\item[(i)] Each element of $\pi_{1}(\mathcal{B}_{2})$ can be represented in the canonical form $\alpha^x\beta^y\gamma^z$
for some integer $x,y,z$.
\item[(ii)] The product of two canonical forms is given by the
formula
\begin{equation}\label{multlaw2}
\alpha^x\beta^y\gamma^z\cdot
\alpha^{x'}\beta^{y'}\gamma^{z'}=\left\{
\begin{aligned} \alpha^{x+x'}\beta^{y+y'}\gamma^{z+z'} \text{if } x' \text {is even} \\
\alpha^{x+x'}\beta^{-y-z+y'}\gamma^{z+z'} \text{if } x' \text {is odd} \\
\end{aligned} \right.
\end{equation}
\item[(iii)] The canonical epimorphism $\psi: \pi_{1}(\mathcal{B}_{2}) \to
 \pi_{1}(\mathcal{B}_{2})/\langle \beta \rangle \cong \ZZ^2$, given
by the formula $\alpha^x\beta^y\gamma^z \to (x,z)$ is well-defined.
\item[(iv)] The representation in the canonical form $\alpha^x\beta^y\gamma^z$ for
each element is unique.
\end{itemize}
\end{proposition}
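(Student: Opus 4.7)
The plan is to mirror the proof of Proposition~\ref{prop2}, adjusted for the relations~(\ref{fundminus3}) of $\pi_1(\mathcal{B}_2)$. As in the earlier case, parts~(i) and~(iv) will follow from~(ii) and~(iii) once those are established, so the main work is in~(ii).

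For part~(ii), the strategy is to derive the commutation rules ``$\alpha$ past $\beta$'' and ``$\alpha$ past $\gamma^z$'' from the defining relations, then combine them. From $\alpha\beta\alpha^{-1}=\beta^{-1}$ one immediately obtains $\beta^y\alpha=\alpha\beta^{-y}$. From $\alpha\gamma\alpha^{-1}\gamma^{-1}=\beta$ together with $\gamma\beta=\beta\gamma$, an induction on $z$ produces a closed form for $\gamma^z\alpha$, expressing it as $\alpha\cdot(\text{power of }\beta)\cdot\gamma^z$. Combining the two pushes yields the canonical form of $\beta^y\gamma^z\alpha$, and hence of $\beta^y\gamma^z\alpha^{x'}$ for general $x'$. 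The two-case form of~(ii) arises because a direct check shows $\alpha^2$ commutes with both $\beta$ and $\gamma$ (the induced automorphism of $\langle\beta,\gamma\rangle$ has order~$2$: $\alpha^2\beta\alpha^{-2}=\alpha\beta^{-1}\alpha^{-1}=\beta$ and similarly for $\gamma$), so only the parity of $x'$ matters.

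Part~(i) is then an immediate consequence of~(ii): any word in $\alpha^{\pm 1},\beta^{\pm 1},\gamma^{\pm 1}$ reduces to canonical form by repeated application of the multiplication rule. For~(iii), I would verify that $\langle\beta\rangle$ is normal, using $\alpha\beta\alpha^{-1}=\beta^{-1}\in\langle\beta\rangle$ and $\gamma\beta\gamma^{-1}=\beta\in\langle\beta\rangle$. In the quotient, all three defining relations collapse to $[\alpha,\gamma]=1$, so $\pi_1(\mathcal{B}_2)/\langle\beta\rangle\cong\ZZ^2$, and the explicit map $\alpha^x\beta^y\gamma^z\mapsto(x,z)$ agrees with this quotient map.

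For part~(iv), suppose $\alpha^x\beta^y\gamma^z=\alpha^{x'}\beta^{y'}\gamma^{z'}$. Applying $\psi$ gives $(x,z)=(x',z')$ at once; multiplying both sides by the inverse of the right-hand side and simplifying via~(ii) collapses the equation to $\beta^{y-y'}=e$. So it suffices to note that $\beta$ has infinite order in $\pi_1(\mathcal{B}_2)$; this follows because $\langle\alpha^2,\beta,\gamma\rangle$ is a finite-index abelian subgroup isomorphic to $\ZZ^3$ (the translation lattice of the Bieberbach group). The main technical obstacle is the inductive computation of $\gamma^z\alpha$ in~(ii), where signs must be tracked carefully, since this is precisely the point at which the behaviour diverges from the $\mathcal{B}_1$ case; the remainder is a routine adaptation of the proof of Proposition~\ref{prop2}.
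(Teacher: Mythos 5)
Your outline is essentially the paper's own treatment: the paper gives no separate proof of \Cref{prop2-2} (Section 4 states that proofs are omitted when analogous to Section \ref{half1}), and its proof of \Cref{prop2} has exactly the structure you propose — (ii) by direct verification of the commutation rules, (i) as a consequence of (ii), (iii) from normality of $\langle\beta\rangle$, and (iv) from (iii) together with the infinite order of $\beta$; your appeal to the translation lattice $\langle\alpha^2,\beta,\gamma\rangle\cong\ZZ^3$ for the last point is at the same level of rigor as the paper's remark that $Pc$ (resp.\ $Cc$) is torsion-free. The one step you deferred (``signs must be tracked carefully'') is, however, exactly where the statement and the presentation \ref{fundminus3} disagree: from $\alpha\beta\alpha^{-1}=\beta^{-1}$ and $\alpha\gamma\alpha^{-1}\gamma^{-1}=\beta$ one gets $\alpha^{-1}\gamma\alpha=\beta\gamma$, hence $\beta^{y}\gamma^{z}\alpha=\alpha\,\beta^{-y+z}\gamma^{z}$, and for odd $x'$ the product is $\alpha^{x+x'}\beta^{-y+z+y'}\gamma^{z+z'}$, not $\alpha^{x+x'}\beta^{-y-z+y'}\gamma^{z+z'}$ as written in \ref{multlaw2}. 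The stated law is the one obtained from the relation $\alpha\gamma\alpha^{-1}\gamma^{-1}=\beta^{-1}$ (equivalently, from replacing $\beta$ by $\beta^{-1}$), so this is a sign typo on the paper's side rather than a flaw in your method, and it does not affect the later enumeration; but a complete execution of your step (ii) must either correct the sign in \ref{multlaw2} or adjust the relation in \ref{fundminus3}, since as literally stated the two are incompatible.
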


\begin{lemma}\label{prop2-3}
Let  $\Delta$ be a subgroup of index $n$ in $\pi_1(\mathcal{B}_2)$.
By $l(\Delta)$ denote the minimal positive integer, such that
$\beta^{l(\Delta)} \in \Delta$. Such an $l(\Delta)$ exists, and
satisfy the relation $\l(\Delta)\cdot[\pi_1(\mathcal{B}_2) :
\psi(\Delta)]=n$.
\end{lemma}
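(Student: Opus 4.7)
The plan is to repeat the argument of \Cref{prop3} almost verbatim, replacing $b$ by $\beta$ and $\phi$ by $\psi$. The two ingredients that make this work have already been secured: by \Cref{prop2-2}(iii), $\langle\beta\rangle$ is normal in $\pi_1(\mathcal{B}_2)$ with quotient $\ZZ^2$ realized by $\psi$, and by \Cref{prop2-2}(iv), the element $\beta$ has infinite order. For existence of $l(\Delta)$, I would argue by contradiction: if no positive power of $\beta$ lay in $\Delta$, then $\beta,\beta^2,\beta^3,\dots$ would represent pairwise distinct right cosets of $\Delta$ (since $\beta^i\Delta=\beta^j\Delta$ forces $\beta^{i-j}\in\Delta$), contradicting $[\pi_1(\mathcal{B}_2):\Delta]=n<\infty$.

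Next, for the index relation, I would pick $g_1,\dots,g_k \in \pi_1(\mathcal{B}_2)$ whose images under $\psi$ form a complete system of right coset representatives of $\psi(\Delta)$ in $\ZZ^2$, where $k=[\ZZ^2:\psi(\Delta)]$, and claim that
$$\{\,g_i\beta^j : 1\le i\le k,\ 0\le j\le l(\Delta)-1\,\}$$
is a complete system of right coset representatives of $\Delta$ in $\pi_1(\mathcal{B}_2)$; this immediately yields $n = k\cdot l(\Delta) = l(\Delta)\cdot[\pi_1(\mathcal{B}_2):\psi(\Delta)]$. Distinctness follows by first applying $\psi$ to $g_i\beta^j\Delta = g_{i'}\beta^{j'}\Delta$ to conclude $i=i'$, then using minimality of $l(\Delta)$ to force $j=j'$ from $\beta^{j-j'}\in\Delta$ with $|j-j'|<l(\Delta)$. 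Exhaustiveness uses normality of $\langle\beta\rangle$: given any $h\in\pi_1(\mathcal{B}_2)$, I would pick $i$ so that $\psi(hg_i^{-1})\in\psi(\Delta)$, hence $hg_i^{-1}\in\Delta\langle\beta\rangle$, write $h = d\beta^m g_i$ with $d\in\Delta$, move $\beta^m$ to the right of $g_i$ via the conjugation identity $\beta^m g_i = g_i\beta^{(-1)^{x_i}m}$ (where $x_i$ is the $\alpha$-exponent of $g_i$), and finally reduce the exponent modulo $l(\Delta)$, absorbing the resulting multiple of $\beta^{l(\Delta)}$ back into the $\Delta$-factor.

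The proof presents no real obstacle. The only feature distinguishing it from the $\mathcal{B}_1$ case is the sign $(-1)^{x_i}$ above, which arises from the relation $\alpha\beta\alpha^{-1}=\beta^{-1}$ in $\pi_1(\mathcal{B}_2)$; this is harmless because $\beta^{l(\Delta)}\in\Delta$ automatically implies $\beta^{-l(\Delta)}\in\Delta$, so conjugating any power of $\beta^{l(\Delta)}$ by $g_i$ keeps it inside $\Delta$.
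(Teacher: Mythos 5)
Your proof is correct and follows essentially the same route as the paper: the paper gives no separate argument for \Cref{prop2-3}, relying on the analogy with \Cref{prop3}, and your argument is exactly that proof transported to $\pi_1(\mathcal{B}_2)$, with the sign $(-1)^{x_i}$ from $\alpha\beta\alpha^{-1}=\beta^{-1}$ handled correctly via $\beta^{l(\Delta)}\in\Delta \Rightarrow \beta^{-l(\Delta)}\in\Delta$.
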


The next proposition shows that the introduced above invariant
$\psi(\Delta)$ is sufficient to determine whether $\Delta$ is
abelian or not.

\begin{proposition}\label{prop2-4}
Let $\Delta$ be a subgroup of finite index in
$\pi_{1}(\mathcal{B}_{2})$. Then $\Delta$ is abelian if and only if
$\psi(\Delta) \leqslant \{(2x,z)| \,\,x,z \in \ZZ \}$. \footnote{In
other words, $\Delta$ is abelian iff $\Delta \leqslant \langle
\alpha^2,\beta,\gamma\rangle$}
\end{proposition}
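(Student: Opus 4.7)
The plan is to mirror the proof of \Cref{prop4}, with the only nontrivial adjustment arising from the more elaborate multiplication law (\ref{multlaw2}) of \Cref{prop2-2}, whose odd-$x'$ branch carries an extra $-z$ term not present in the $\mathcal{B}_1$ setting.

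For the easy ``if'' direction, I would observe that $\psi(\Delta)\leqslant\{(2x,z)\mid x,z\in\ZZ\}$ forces every element of $\Delta$ into the form $\alpha^{2x}\beta^y\gamma^z$, i.e.\ with an even $\alpha$-exponent. In any product of two such elements the second factor has even $x'$, so formula (\ref{multlaw2}) always invokes its first branch, producing $\alpha^{2x+2x'}\beta^{y+y'}\gamma^{z+z'}$, which is symmetric under the swap of the two factors. Hence $\Delta$ is abelian.

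For the ``only if'' direction, I would argue by contrapositive. If $\psi(\Delta)\nleqslant\{(2x,z)\}$, then $\Delta$ contains an element $g=\alpha^{2x+1}\beta^y\gamma^z$ with odd $\alpha$-exponent, and by \Cref{prop2-3} it also contains $h=\beta^{l(\Delta)}$ with $l(\Delta)\ge 1$. Computing $gh$ from (\ref{multlaw2}) goes through the even-$x'$ branch and yields $\alpha^{2x+1}\beta^{y+l(\Delta)}\gamma^z$; computing $hg$ goes through the odd-$x'$ branch (with the first factor's $y$-exponent equal to $l(\Delta)$ and its $z$-exponent equal to $0$) and yields $\alpha^{2x+1}\beta^{y-l(\Delta)}\gamma^z$. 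Since $\beta$ has infinite order and $l(\Delta)>0$, the canonical forms (\Cref{prop2-2}(iv)) of $gh$ and $hg$ differ, so $\Delta$ is non-abelian.

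The only delicate point I anticipate is ensuring that the witness pair $(g,h)$ genuinely detects non-commutativity despite the extra $-z$ appearing in the odd branch of (\ref{multlaw2}). This is precisely why I choose $h=\beta^{l(\Delta)}$ rather than an arbitrary element of $\Delta\cap\ker\psi$: its $z$-exponent vanishes, neutralising the $-z$ contribution and reducing the comparison to the clean sign-flip check $y+l(\Delta)\neq y-l(\Delta)$, in complete parallel to the argument used for \Cref{prop4}.
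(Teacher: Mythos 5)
Your proof is correct and is exactly the argument the paper intends: the paper gives no separate proof of \Cref{prop2-4}, relying on it being analogous to \Cref{prop4}, and your adaptation — the even-branch symmetry of (\ref{multlaw2}) for the ``if'' direction, and the witness pair $g=\alpha^{2x+1}\beta^y\gamma^z$, $h=\beta^{l(\Delta)}$ (via \Cref{prop2-3}) together with uniqueness of canonical forms for the ``only if'' direction — is precisely that analogue. Your observation that taking $h$ with zero $\gamma$-exponent neutralises the extra $z$-term in the odd branch is the right safeguard, reducing the check to $y+l(\Delta)\neq y-l(\Delta)$ just as in the $\mathcal{B}_1$ case.
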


As a corollary of \Cref{prop2-4} we obtain the value of
$s_{\ZZ^3,\pi_1(\mathcal{B}_2)}(n)$.

\begin{corollary}\label{prop2-5}
The number of subgroups of index $n$ in $\pi_1(\mathcal{B}_{2})$,
which are isomorphic to $\ZZ^3$, is given by the formula:
$$
s_{\ZZ^3, \pi_1(\mathcal{B}_{1})}(n) = \sum_{2l \mid n}
\sigma_1(l)l.
$$
\end{corollary}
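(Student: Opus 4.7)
The plan is to mirror the proof of \Cref{prop5} almost verbatim, exploiting \Cref{prop2-4} in place of \Cref{prop4}. First, suppose $\Delta$ is a subgroup of index $n$ in $\pi_1(\mathcal{B}_2)$ with $\Delta\cong\ZZ^3$. By \Cref{prop2-4}, $\Delta$ is abelian forces $\psi(\Delta)\leqslant\{(2x,z)\mid x,z\in\ZZ\}$, equivalently $\Delta\leqslant\langle\alpha^2,\beta,\gamma\rangle$. A straightforward preliminary step would be to check using the multiplication law \eqref{multlaw2} that $\langle\alpha^2,\beta,\gamma\rangle$ is itself abelian and isomorphic to $\ZZ^3$: the second clause of \eqref{multlaw2} applies only when $x'$ is odd, so among elements whose $\alpha$-exponent is even the first clause is always used, giving commutativity, and the canonical form (\Cref{prop2-2}(iv)) shows freeness.

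Second, since $[\pi_1(\mathcal{B}_2):\langle\alpha^2,\beta,\gamma\rangle]=2$, any index-$n$ subgroup $\Delta\leqslant\langle\alpha^2,\beta,\gamma\rangle$ has index $n/2$ in $\ZZ^3$, and in particular $n$ must be even for such a $\Delta$ to exist. Thus the count reduces to the well-known number of sublattices of index $n/2$ in $\ZZ^3$, which by \cite{LisM} Corollary~4.4 equals
$$\sum_{k\mid n/2}\sigma_1(k)\,k.$$
After the change of variables $l=k$, i.e., replacing the condition $k\mid n/2$ with the equivalent $2l\mid n$, this becomes $\sum_{2l\mid n}\sigma_1(l)\,l$, matching the claimed formula (and agreeing, as expected, with $s_{\ZZ^3,\pi_1(\mathcal{B}_1)}(n)$ from \Cref{prop5}). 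When $n$ is odd, $2l\mid n$ is never satisfied, so the sum is empty and the count is $0$, consistent with the fact that $\langle\alpha^2,\beta,\gamma\rangle$ has even index.

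There is essentially no obstacle; the only point requiring any verification is the abelianness of $\langle\alpha^2,\beta,\gamma\rangle$, which is immediate from \eqref{multlaw2}. Everything else is a direct transcription of the argument given for \Cref{prop5}.
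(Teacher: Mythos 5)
Your proof is correct and is essentially the paper's own argument: the paper omits a proof of this corollary precisely because it is the verbatim transcription of the proof of \Cref{prop5}, using \Cref{prop2-4} (and \Cref{lemma2-3}) to reduce to counting index-$n/2$ sublattices of $\langle\alpha^2,\beta,\gamma\rangle\cong\ZZ^3$ via \cite{LisM}, which is exactly what you do. Your extra verification that $\langle\alpha^2,\beta,\gamma\rangle$ is abelian via \eqref{multlaw2} is a harmless (and correct) addition.
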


The following lemmas are technical statements, needed to introduce
the important invariant $\nu$.

\begin{lemma}\label{lemma2-2}
Let $\Delta$ be a subgroup of a finite index in
$\pi_1(\mathcal{B}_2)$. Then $\psi(\Delta)\cong \ZZ^2$.
\end{lemma}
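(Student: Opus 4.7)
The plan is to mirror the proof of \Cref{lemma2} almost verbatim, since the only ingredients used there were the facts that $\phi$ is an epimorphism onto $\ZZ^2$ and that finite-index subgroups of $\ZZ^2$ are isomorphic to $\ZZ^2$. Both of these are available in the present setting: by \Cref{prop2-2}(iii), $\psi:\pi_{1}(\mathcal{B}_2)\twoheadrightarrow \ZZ^2$ is a well-defined epimorphism, so the image $\psi(\Delta)$ is an abelian subgroup of $\ZZ^2$ and is in particular torsion-free and finitely generated.

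The key step is to show that $\psi(\Delta)$ has finite index in $\ZZ^2$. First I would verify the standard fact that for any surjection $\psi:G\twoheadrightarrow H$ and any subgroup $\Delta\leqslant G$, the index $[H:\psi(\Delta)]$ divides $[G:\Delta]$. In our situation this gives $[\ZZ^2:\psi(\Delta)]\mid [\pi_1(\mathcal{B}_2):\Delta]=n$, so $\psi(\Delta)$ is of finite index in $\ZZ^2$. Then I would invoke the classical structure theorem for finitely generated abelian groups (or, more concretely, the Smith normal form for the matrix whose rows generate $\psi(\Delta)$): a finite-index subgroup of $\ZZ^2$ is free abelian of the same rank, hence isomorphic to $\ZZ^2$.

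I do not anticipate any real obstacle here, since the argument is purely formal and does not depend on the specific multiplication law \ref{multlaw2}; the lemma only uses that $\psi$ lands in $\ZZ^2$ and is surjective. The only thing to take care of is the direction of the index divisibility, which one obtains by choosing a transversal: given right coset representatives $g_1,\dots,g_n$ of $\Delta$ in $\pi_1(\mathcal{B}_2)$, their images $\psi(g_1),\dots,\psi(g_n)$ cover all of $\ZZ^2$, so every coset of $\psi(\Delta)$ is represented among them, giving $[\ZZ^2:\psi(\Delta)]\leqslant n$. This completes the proof.
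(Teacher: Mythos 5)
Your proposal is correct and matches the paper's approach: the paper gives no separate proof for this lemma, referring instead to the proof of \Cref{lemma2}, which argues exactly as you do that $[\ZZ^2:\psi(\Delta)]$ divides $[\pi_1(\mathcal{B}_2):\Delta]$ and hence is finite, and that any finite-index subgroup of $\ZZ^2$ is isomorphic to $\ZZ^2$. Your extra remarks (the transversal argument for finiteness and the Smith normal form justification) only flesh out the same standard facts.
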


{\bf Notation.} By $\overline{v}=(x_v,z_v)$ and
$\overline{u}=(x_u,z_u)$ denote a pair of generators of
$\psi(\Delta)$, where $\psi(\Delta)$ is considered as a subgroup of
$\{(x,z)\mid x\in \ZZ, z\in \ZZ\}$.

\begin{lemma}\label{lemma2-3}
Any subgroup of finite index in $\langle \alpha^2,\beta,\gamma
\rangle$ is isomorphic to $\ZZ^3$.
\end{lemma}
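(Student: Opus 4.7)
The plan is to mimic the proof of the analogous \Cref{lemma3}. It suffices to show that $\langle \alpha^2,\beta,\gamma\rangle$ itself is isomorphic to $\ZZ^3$; then any subgroup of finite index in it is free abelian of rank~$3$ by the standard classification of finitely generated abelian groups.

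First I would verify that $\langle \alpha^2,\beta,\gamma\rangle$ is abelian. By \Cref{prop2-2}(i), every element of this subgroup can be written in the canonical form $\alpha^x\beta^y\gamma^z$, and membership in $\langle \alpha^2,\beta,\gamma\rangle$ forces $x$ to be even (this is essentially what \Cref{prop2-4} records via $\psi$). The multiplication law \ref{multlaw2} with $x'$ even reduces to componentwise addition $\alpha^x\beta^y\gamma^z\cdot\alpha^{x'}\beta^{y'}\gamma^{z'}=\alpha^{x+x'}\beta^{y+y'}\gamma^{z+z'}$, so the subgroup is abelian.

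Next I would exhibit an isomorphism $\langle \alpha^2,\beta,\gamma\rangle\to\ZZ^3$. The map $\alpha^{2m}\beta^y\gamma^z\mapsto (m,y,z)$ is a homomorphism by the simplified product formula above, surjective by construction, and injective by the uniqueness part \Cref{prop2-2}(iv) of the canonical form. This identifies $\langle \alpha^2,\beta,\gamma\rangle$ with $\ZZ^3$.

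Finally, any subgroup of finite index in $\ZZ^3$ is itself free abelian of rank~$3$, which gives the claim. There is no real obstacle here; the only point worth double-checking is that the canonical-form uniqueness in $\pi_1(\mathcal{B}_2)$ rules out any hidden torsion or collapse among the generators $\alpha^2,\beta,\gamma$, but this is guaranteed by \Cref{prop2-2}(iv).
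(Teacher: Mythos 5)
Your proof is correct and takes essentially the same route as the paper: the paper omits a proof here because the lemma is the verbatim analogue of \Cref{lemma3}, whose one-line argument is that a finite-index subgroup of $\ZZ^3$ is again $\ZZ^3$. You simply make explicit the identification $\langle \alpha^2,\beta,\gamma\rangle\cong\ZZ^3$ via the canonical form of \Cref{prop2-2} and the even-exponent case of the product formula, a step the paper takes for granted.
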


\begin{lemma}\label{prop2-7}
Let $(x,z)\in \psi(\Delta)$. Then there exist an integer number
$\mu(x,z),\,\, 0\le \mu(x,z) \le l(\Delta)-1$, such that for all
$\alpha^x\beta^y\gamma^z \in \Delta$ we have $y \equiv \mu(x,z) \mod
l(\Delta)$.
\end{lemma}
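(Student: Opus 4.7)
The plan is to mirror the argument given for the analogous \Cref{prop7} in the $\mathcal{B}_1$ setting, with the only substantive change being the book-keeping forced by the parity-dependent multiplication law \ref{multlaw2}. First I would fix $(x,z)\in\psi(\Delta)$ and argue by contradiction: suppose that two preimages $g=\alpha^x\beta^y\gamma^z$ and $h=\alpha^x\beta^{y'}\gamma^z$ of $(x,z)$ lie in $\Delta$ with $y\not\equiv y' \pmod{l(\Delta)}$. Since $\Delta$ is a subgroup, $h^{-1}g\in\Delta$, and the goal is to show $h^{-1}g=\beta^{y-y'}$, so that the minimality of $l(\Delta)$ (which was established as an abstract fact in \Cref{prop2-3}) is violated.

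Next I would carry out the computation of $h^{-1}g$ explicitly via \ref{multlaw2}. Writing $h^{-1}=\alpha^{x''}\beta^{y''}\gamma^{z''}$, one reads off $x''=-x,\ z''=-z$ from $\psi(h^{-1})=-(x,z)$, and then solves $h\cdot h^{-1}=e$ by cases: when $x$ is even one gets $y''=-y'$, and when $x$ is odd the odd-parity branch of \ref{multlaw2} forces $y''=y'+z$. Substituting back into $h^{-1}g$ and again splitting on the parity of $x$ (the parity of the second factor's $\alpha$-exponent), a short direct calculation shows that in both parities the $\gamma$- and $\alpha$-exponents cancel and the $\beta$-exponent collapses to $y-y'$. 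Thus $h^{-1}g=\beta^{y-y'}\in\Delta$, contradicting the definition of $l(\Delta)$.

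Having established uniqueness of the residue of $y$ modulo $l(\Delta)$ among preimages of $(x,z)$ in $\Delta$, I would define $\mu(x,z)$ to be the unique representative in $\{0,1,\dots,l(\Delta)-1\}$ of this common residue class, noting that at least one preimage exists since $(x,z)\in\psi(\Delta)$.

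The only obstacle worth flagging is the bookkeeping in the case split: because \ref{multlaw2} is asymmetric in the parity of the second factor, one must be careful that the formula for $h^{-1}$ is the \emph{correct} inverse (i.e.\ one has to use $h\cdot h^{-1}=e$, not $h^{-1}\cdot h=e$, to determine $y''$, since the parity rule depends on the exponent of the right factor). Once the correct inverse is written down, the verification that $h^{-1}g$ reduces to $\beta^{y-y'}$ is routine, and the rest of the proof is identical in structure to \Cref{prop7}.
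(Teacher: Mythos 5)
Your proposal is correct and follows essentially the paper's own route: the paper gives no separate proof for this lemma (it refers to the analogous \Cref{prop7}, where the argument is exactly the contradiction $h^{-1}g=\beta^{y-y'}\in\Delta$ against the minimality of $l(\Delta)$), and your parity case analysis via \ref{multlaw2} correctly fills in the only new bookkeeping, yielding $h^{-1}g=\beta^{y-y'}$ in both parities. The only cosmetic remark: your caution about using $h\cdot h^{-1}=e$ rather than $h^{-1}\cdot h=e$ is unnecessary, since in a group the two equations determine the same (two-sided) inverse, as a direct check with \ref{multlaw2} confirms.
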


\begin{lemma}\label{prop2-8}
Assume $\psi(\Delta) \nleqslant \{(2x,z)| \,\,x,z \in \ZZ \}$, then
one can choose the generators $\overline{v}=(x_v,z_v)$ and
$\overline{u}=(x_u,z_u)$ in such a way that $x_v$ is odd and $x_u$
is even.
\end{lemma}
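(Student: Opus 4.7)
The plan is to mimic the proof of \Cref{prop8} essentially verbatim, since the statement only involves the image $\psi(\Delta) \leqslant \ZZ^2$ and not the multiplicative structure of $\pi_1(\mathcal{B}_2)$. The point is that once we work inside $\psi(\Delta)$, which is a rank-2 free abelian group by \Cref{lemma2-2}, the question reduces to an elementary change-of-basis statement in $\ZZ^2$.

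First I would fix any pair of generators $\overline{v} = (x_v, z_v)$ and $\overline{u} = (x_u, z_u)$ of $\psi(\Delta)$, which exist by \Cref{lemma2-2}. Then I would observe that at least one of $x_v, x_u$ must be odd: if both were even, every element $p\overline{v} + q\overline{u}$ of $\psi(\Delta)$ would have an even first coordinate, placing $\psi(\Delta)$ inside $\{(2x,z) \mid x,z \in \ZZ\}$ and contradicting the hypothesis. Relabeling if necessary, I would assume $x_v$ is odd.

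If $x_u$ is already even we are done. Otherwise, I would replace $\overline{u}$ by $\overline{u}' := \overline{u} + \overline{v}$; the pair $(\overline{v}, \overline{u}')$ is still a generating set for $\psi(\Delta)$ because the change of basis matrix $\bigl(\begin{smallmatrix}1 & 0\\ 1 & 1\end{smallmatrix}\bigr)$ has determinant $1$ over $\ZZ$, and the new first coordinate $x_u + x_v$ is a sum of two odd integers, hence even. This yields the required generators.

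There is essentially no obstacle here; the only thing to be careful about is making the elementary-matrix replacement explicit so that it is clear the resulting pair still generates $\psi(\Delta)$ (rather than merely lying in it). No facts beyond \Cref{lemma2-2} and the definition of $\psi$ are needed.
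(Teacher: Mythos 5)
Your proof is correct and follows the same route the paper takes: the paper gives no separate argument for this lemma and relies on the proof of Lemma~\ref{prop8}, which proceeds exactly as you do (at least one of $x_v,x_u$ is odd since otherwise $\psi(\Delta)\leqslant\{(2x,z)\}$, take $x_v$ odd, and replace $\overline{u}$ by $\overline{u}+\overline{v}$ if needed). Your added remark that the unimodular change of basis preserves the generating property is a welcome explicit detail, but it is not a new idea.
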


From now on we fix some $\overline{v}$ and $\overline{u}$, chosen in
this way.

\bigskip

\begin{lemma}\label{prop2-8.5}
$z_u \equiv [\ZZ^2:\psi(\Delta)] \mod{2}$.
\end{lemma}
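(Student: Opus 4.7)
The plan is to observe that the statement reduces to a straightforward parity computation once one invokes the standard interpretation of the index of a sublattice of $\ZZ^2$ as the absolute value of a $2\times 2$ determinant. Since $\overline{v}=(x_v,z_v)$ and $\overline{u}=(x_u,z_u)$ are a pair of generators of $\psi(\Delta)$ viewed as a subgroup of $\ZZ^2$, a classical fact on integer lattices gives
\[
[\ZZ^2:\psi(\Delta)] \;=\; \left|\,\det\!\begin{pmatrix} x_v & z_v \\ x_u & z_u \end{pmatrix}\right| \;=\; |x_v z_u - x_u z_v|.
\]
I would record this equality as the first step of the proof, referring to it as the elementary invariant factor / Smith normal form computation (any generating pair yields the same determinant up to sign).

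The second and final step is a parity check. By \Cref{prop2-8} we chose the generators so that $x_v$ is odd and $x_u$ is even. Then $x_v z_u \equiv z_u \pmod 2$ and $x_u z_v \equiv 0 \pmod 2$, so
\[
[\ZZ^2:\psi(\Delta)] \;=\; |x_v z_u - x_u z_v| \;\equiv\; z_u \pmod 2,
\]
which is exactly the claim. Since the absolute value affects only the sign, and $k\equiv -k\pmod 2$, taking absolute value is harmless.

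I do not anticipate any obstacle: everything is two lines of linear algebra modulo $2$. The only mildly nontrivial ingredient is the determinantal formula for the index of a sublattice, which is a standard fact and does not depend on the specific choice of generating pair. No properties of $\pi_1(\mathcal{B}_2)$ beyond the setup of \Cref{prop2-8} are needed.
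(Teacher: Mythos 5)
Your proof is correct and is essentially the paper's own argument: the paper's proof likewise consists of the identity $[\ZZ^2:\psi(\Delta)] = |x_v z_u - x_u z_v|$ combined with the choice (from \Cref{prop2-8}) that $x_v$ is odd and $x_u$ is even, yielding the parity claim. You have merely written out the same two steps in more detail.
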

\begin{proof}
Follows from $[\ZZ^2:\psi(\Delta)] =|x_vz_u-x_uz_v|$, $x_v$ is odd
and $x_u$ is even.
\end{proof}

\begin{lemma}[almost additivity]\label{prop2-10}
$\nu(s+2p,t+q)\equiv \nu(s,t)+\nu(2p;q) \mod l(\Delta)$.
\end{lemma}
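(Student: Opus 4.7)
The plan is to mimic the proof of \Cref{prop10} from the $\mathcal{B}_1$ setting, replacing the multiplication law \ref{multlaw1} with its $\mathcal{B}_2$ counterpart \ref{multlaw2}. The definition of $\nu(p,q)$ for $\pi_1(\mathcal{B}_2)$ is the obvious analog: for $w=p\overline{v}+q\overline{u}\in\psi(\Delta)$, put $\nu(p,q)=\mu(w)$ in the sense of \Cref{prop2-7}. So I must show that for any preimages of $s\overline{v}+t\overline{u}$ and $2p\overline{v}+q\overline{u}$ in $\Delta$, their product lies in $\Delta$ and has $\beta$-exponent congruent to $\nu(s,t)+\nu(2p,q)$ modulo $l(\Delta)$.

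First, by \Cref{prop2-7}, I can take preimages of the form
$$g=\alpha^{sx_v+tx_u}\beta^{\nu(s,t)+kl(\Delta)}\gamma^{sz_v+tz_u}, \qquad h=\alpha^{2px_v+qx_u}\beta^{\nu(2p,q)+k'l(\Delta)}\gamma^{2pz_v+qz_u}$$
for some integers $k,k'$. The key observation is that the $\alpha$-exponent of $h$, namely $2px_v+qx_u$, is always even: $2px_v$ is even for trivial reasons, and $qx_u$ is even because $x_u$ is even by our fixed choice of generators in \Cref{prop2-8}. Therefore the product $gh$ falls into the first (untwisted) branch of the multiplication law \ref{multlaw2}, and no sign flip or $\gamma$-cross-term appears in the $\beta$-exponent:
$$gh=\alpha^{(s+2p)x_v+(t+q)x_u}\beta^{\nu(s,t)+\nu(2p,q)+(k+k')l(\Delta)}\gamma^{(s+2p)z_v+(t+q)z_u}.$$

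Since $g,h\in\Delta$ we have $gh\in\Delta$, and clearly $\psi(gh)=(s+2p)\overline{v}+(t+q)\overline{u}$. The uniqueness assertion in \Cref{prop2-7} applied to this preimage of $(s+2p)\overline{v}+(t+q)\overline{u}$ then forces $\nu(s+2p,t+q)\equiv \nu(s,t)+\nu(2p,q)\pmod{l(\Delta)}$, which is exactly the claim.

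I do not foresee any substantive obstacle here; the only subtlety compared to \Cref{prop10} is that \ref{multlaw2} splits into two cases, and one must verify the right factor's $\alpha$-exponent lands in the even case. That verification is automatic from the condition $x_u\equiv 0\pmod{2}$ fixed in \Cref{prop2-8}, which is precisely why the almost-additivity requires the first coordinate of the second argument to be even (written as $2p$) rather than arbitrary.
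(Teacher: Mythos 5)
Your proof is correct and is essentially the argument the paper intends: since it omits the proof as "analogous" to \cref{prop10}, the intended proof is exactly your adaptation, where the evenness of the second factor's $\alpha$-exponent (guaranteed by $x_u$ even from \cref{prop2-8} together with the factor $2p$) puts the product in the untwisted branch of \eqref{multlaw2}, after which \cref{prop2-7} gives the congruence. No gaps.
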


\begin{lemma}\label{prop2-9}
$\nu(2p,2q)=-pz_v-qz_u$.
\end{lemma}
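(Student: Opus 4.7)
The strategy mirrors the proof of Lemma \ref{prop9} in the $\mathcal{B}_1$ setting: compute $\nu$ at a few base points by squaring suitable preimages, and propagate by the almost-additivity relation \ref{prop2-10}.

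First I would observe that iterating Lemma \ref{prop2-10} yields $\nu(2p, 2q) \equiv p\,\nu(2, 0) + q\,\nu(0, 2) \pmod{l(\Delta)}$ (apply the lemma with $s=0, t=2q$ to split off the two summands, then iterate each separately), so it suffices to prove $\nu(2, 0) \equiv -z_v$ and $\nu(0, 2) \equiv -z_u$ modulo $l(\Delta)$.

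For $\nu(2, 0)$, let $g = \alpha^{x_v}\beta^{\rho(\Delta)}\gamma^{z_v} \in \Delta$ be a preimage of $\overline{v}$. Since $x_v$ is odd, the second (twisted) case of the multiplication law \ref{multlaw2} applies when forming $g \cdot g$: the two $\beta^{\rho}$-contributions cancel, and the $\gamma$-exponent $z_v$ from the left factor survives with a sign flip, giving $g^2 = \alpha^{2x_v}\beta^{-z_v}\gamma^{2z_v}$. Hence $\nu(2, 0) \equiv -z_v$.

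The subtle step, and the main obstacle, is $\nu(0, 2)$: squaring a preimage $h$ of $\overline{u}$ directly is not sufficient, because $x_u$ is even and so the first (untwisted) case of \ref{multlaw2} applies, yielding only $\nu(0,2) \equiv 2\eps(\Delta)$ without any visible reference to $-z_u$. Instead, I would compute $\nu(2, 2)$ by squaring $gh$, which is a preimage of $\overline{v} + \overline{u}$. Because $x_v + x_u$ is again odd, the twisted case fires a second time, and after the $\beta^{\rho+\eps}$-contributions cancel one gets $(gh)^2 = \alpha^{2(x_v+x_u)}\beta^{-z_v-z_u}\gamma^{2(z_v+z_u)}$, so $\nu(2, 2) \equiv -z_v - z_u$. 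Combining this with the identity $\nu(2, 2) \equiv \nu(2, 0) + \nu(0, 2)$ from Lemma \ref{prop2-10} and the previously established value $\nu(2, 0) \equiv -z_v$ forces $\nu(0, 2) \equiv -z_u$, completing the argument. As a byproduct the identity $2\eps(\Delta) \equiv -z_u \pmod{l(\Delta)}$ drops out, playing the role of the $\mathcal{B}_2$ analog of Lemma \ref{prop11}(ii).
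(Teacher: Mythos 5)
Your proof is correct and takes essentially the same approach as the paper: compute $\nu(2,0)=-z_v$ by squaring a preimage of $\overline{v}$ (using the twisted case of \ref{multlaw2}) and then propagate with the almost-additivity Lemma \ref{prop2-10}. You also make explicit a step the paper leaves implicit in its terse ``use Lemma \ref{prop2-10} to finish'': obtaining $\nu(2,2)=-z_v-z_u$ by squaring a preimage of $\overline{v}+\overline{u}$ (exactly as in the $\mathcal{B}_1$ prototype, Lemma \ref{prop9}), which is indeed needed since, as you observe, squaring a preimage of $\overline{u}$ alone only gives $\nu(0,2)\equiv 2\eps(\Delta)$.
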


\begin{proof}
Consider $g \in \Delta, \,\, \psi(g)=\overline{v}$, i.e.
$g=\alpha^{x_v}\beta^{\nu(1,0)+kl}\gamma^{z_v}$. Since $x_v$ is odd,
$$
g^2=\alpha^{2x_v}\beta^{-\nu(1,0)-kl-z_v+\nu(1,0)+kl}\gamma^{2z_v}=\alpha^{2x_v}\beta^{-z_v}\gamma^{2z_v}.
$$
Thus $\nu(2,0)=-z_v$. Use \Cref{prop2-10} to finish the proof. Also
note that in terms of $\mu$ the statement of \Cref{prop2-9} is much
shorter: $\mu(2x,2z)=-z$ if determined.
\end{proof}

\begin{lemma}\label{prop2-11}
The following holds:
\begin{itemize}
\item $ \nu(1,1) \equiv \nu(0,1) + \nu(1,0) \mod{l(\Delta)}$
\item $ 2\nu(0,1) \equiv \nu(0,2) \mod{l(\Delta)}$.
\end{itemize}
\end{lemma}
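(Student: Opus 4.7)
The plan is to derive both congruences directly from the almost additivity Lemma \ref{prop2-10}, exactly as the analogous statement \Cref{prop11} for $\pi_1(\mathcal{B}_1)$ was obtained from Lemmas \ref{prop10} and \ref{prop9}. Recall that almost additivity asserts $\nu(s+2p,t+q) \equiv \nu(s,t) + \nu(2p,q) \pmod{l(\Delta)}$, where the critical constraint is that the first coordinate of the ``increment'' vector must be even, while the second coordinate is arbitrary.

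For the first claim, I would apply \Cref{prop2-10} with $s=1$, $t=0$, $p=0$, $q=1$. The increment $(2p,q)=(0,1)$ has even first coordinate, so the formula yields $\nu(1,1) \equiv \nu(1,0) + \nu(0,1) \pmod{l(\Delta)}$, which is exactly the first congruence.

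For the second claim, I would apply \Cref{prop2-10} with $s=0$, $t=1$, $p=0$, $q=1$. Again the increment $(0,1)$ has even first coordinate, giving $\nu(0,2) \equiv \nu(0,1) + \nu(0,1) = 2\nu(0,1) \pmod{l(\Delta)}$, which is the second congruence.

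I do not foresee any obstacle; both parts are immediate from almost additivity, and no appeal to \Cref{prop2-9} is required. The only conceptual difference from the $\pi_1(\mathcal{B}_1)$ setting is that \Cref{prop2-9} computes $\nu(2p,2q) = -pz_v - qz_u$ instead of $0$, which is precisely why the second congruence here states $2\nu(0,1) \equiv \nu(0,2)$ rather than $2\nu(0,1) \equiv 0$. As a sanity check, one may combine the derived congruence with \Cref{prop2-9} (taking $p=0$, $q=1$) to obtain the more explicit form $2\nu(0,1) \equiv -z_u \pmod{l(\Delta)}$, which will presumably be the form needed when later classifying $n$-essential 4-plets for $\pi_1(\mathcal{B}_2)$.
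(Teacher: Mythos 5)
Your proof is correct and follows the paper's intended route: the paper treats \Cref{prop2-11} as an immediate corollary of the almost additivity \Cref{prop2-10} (just as \Cref{prop11} was for $\mathcal{B}_1$), and your two substitutions, each with increment $(0,1)$ having even first coordinate, are exactly the right applications. Your closing observation that combining the second congruence with \Cref{prop2-9} gives $2\nu(0,1)\equiv -z_u \pmod{l(\Delta)}$ is also precisely the form used later in the definition of $n$-essential 4-plets for $\pi_1(\mathcal{B}_2)$.
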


{\bf Notation.} Denote $\rho(\Delta)=\nu(1,0)$ and
$\eps(\Delta)=\nu(0,1)$.

Summing up Lemmas \ref{prop2-10}--\ref{prop2-11} we state the
following.
$$
\left\{
\begin{aligned}
\nu(2p,2q)\equiv -pz_v-qz_u \mod{l(\Delta)},\\
\nu(2p,2q+1)\equiv \nu(0,1)-pz_v-qz_u \mod{l(\Delta)},\\
\nu(2p+1,2q)\equiv \nu(1,0)-pz_v-qz_u \mod{l(\Delta)},\\
\nu(2p+1,2q+1)\equiv \nu(1,0)+\nu(0,1)-pz_v-qz_u \mod{l(\Delta)}.\\
\end{aligned}
\right.
$$

{\bf Definition.} A 4-plet
$(l(\Delta),\phi(\Delta),\rho(\Delta),\eps(\Delta))$ is called {\em
$n$-essential} if the following conditions holds:
\begin{itemize}
\item[(i)] $l(\Delta)$ is a positive divisor of $n$,
\item[(ii)] $\psi(\Delta)$ is a subgroup of index $n/l(\Delta)$ in $\ZZ^2$, but not a subgroup of $\{(2p,q\mid p\in\ZZ, \,q\in\ZZ
)\}$,
\item[(iii)] $\rho(\Delta),\eps(\Delta) \in \{0,1,\dots ,l(\Delta)-1\}$, and $2\eps(\Delta) \equiv
-z_u \mod{l(\Delta)}$.
\end{itemize}

\begin{proposition}\label{prop2-12}
There is a bijection between the set of $n$-essential 4-plets
$(l(\Delta),\phi(\Delta),\rho(\Delta),\eps(\Delta))$ and the set of
non-abelian subgroups $\Delta$ of index $n$ in
$\pi_1(\mathcal{B}_2)$.
\end{proposition}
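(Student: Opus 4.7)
My plan is to follow the proof of \Cref{prop12} \emph{mutatis mutandis}, making the adjustments forced by the twisted multiplication law \ref{multlaw2} and by the shift $\nu(2p,2q)\equiv -pz_v-qz_u \pmod{l(\Delta)}$ recorded in \Cref{prop2-9}.

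Given a non-abelian subgroup $\Delta$ of index $n$, I would first verify that the 4-plet $(l(\Delta),\psi(\Delta),\rho(\Delta),\eps(\Delta))$ is $n$-essential. Condition (i) and condition (ii) follow directly from \Cref{prop2-3} and \Cref{prop2-4}. For condition (iii), the range of $\rho(\Delta)$ and $\eps(\Delta)$ is built into the definition of $\nu$, and the congruence $2\eps(\Delta)\equiv -z_u\pmod{l(\Delta)}$ is obtained by combining $2\nu(0,1)\equiv\nu(0,2)\pmod{l(\Delta)}$ from \Cref{prop2-11} with $\nu(0,2)\equiv -z_u\pmod{l(\Delta)}$ from \Cref{prop2-9}. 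Injectivity of the map $\Delta\mapsto(l,\psi,\rho,\eps)$ is then immediate: the four summary formulas displayed just before the definition of $n$-essential express every $\nu(p,q)$ in terms of the 4-plet, and by \Cref{prop2-7} these values pin down exactly which canonical forms $\alpha^x\beta^y\gamma^z$ belong to $\Delta$.

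For surjectivity, given an $n$-essential 4-plet I would choose generators $\overline{v}=(x_v,z_v)$ and $\overline{u}=(x_u,z_u)$ of $\psi(\Delta)$ as in \Cref{prop2-8} and define the candidate $\Delta$ as the union of four families of canonical forms, one for each parity class $(i,j)\in\{0,1\}^{2}$ of $(p,q)$ in the expression $p\overline{v}+q\overline{u}$, with $\beta$-exponent prescribed (up to an arbitrary multiple of $l(\Delta)$) by the appropriate summary formula. Once $\Delta$ is shown to be a subgroup, its index equals $l(\Delta)\cdot[\ZZ^{2}:\psi(\Delta)]=n$ by \Cref{prop2-3}, and the invariants of $\Delta$ read off as $(l(\Delta),\psi(\Delta),\rho(\Delta),\eps(\Delta))$ by construction.

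The main obstacle is verifying that this candidate $\Delta$ is closed under multiplication. Under \ref{multlaw2} the $\beta$-exponent of a product depends on the parity of the second $\alpha$-exponent via the twist $-y-z+y'$; splitting into the eight cases indexed by the parities of $(p,q)$ and $(p',q')$, closure reduces to checking that the four summary formulas are compatible with this twisted addition rule. After substituting the formulas and cancelling, the only genuinely new identity required beyond \Cref{prop2-9}, \Cref{prop2-10}, and \Cref{prop2-11} is $2\eps(\Delta)\equiv -z_u\pmod{l(\Delta)}$, which governs precisely the closure case $(0,1)+(0,1)=(0,2)$ and is exactly condition (iii) of $n$-essentiality; closure under inversion follows from the same case analysis applied to $g\cdot g^{-1}=e$. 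The bookkeeping is noticeably more delicate than in \Cref{prop12} because the twist mixes the $\beta$- and $\gamma$-exponents, but the calculation reduces to routine modular arithmetic once the four summary formulas are substituted throughout.
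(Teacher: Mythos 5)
Your proposal is correct and follows exactly the route the paper intends: Proposition~\ref{prop2-12} is left unproved there precisely because it is the \emph{mutatis mutandis} analogue of \Cref{prop12}, and your adaptation (invariance via \Cref{prop2-3}, \Cref{prop2-7}--\Cref{prop2-11}, reconstruction of $\Delta$ from the four summary formulas, and the observation that closure under the twisted law \ref{multlaw2} hinges only on $2\eps(\Delta)\equiv -z_u \pmod{l(\Delta)}$) is exactly the intended argument, worked out in somewhat more detail than the paper's model proof.
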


\begin{proposition}\label{prop2-13}
The type of nonabelian subgroup $\Delta$ of $\pi_1(\mathcal{B}_2)$
is uniquely determined by the value of $\eps(\Delta)$. More
precisely, if $2\eps(\Delta) \equiv -z_u \mod{2l(\Delta)}$ then
$\Delta \cong \pi_1(\mathcal{B}_1)$,
 if $2\eps(\Delta)
\equiv -z_u+l(\Delta) \mod{2l(\Delta)}$ then $\Delta \cong
\pi_1(\mathcal{B}_2)$.
\end{proposition}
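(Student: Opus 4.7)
The plan is to mirror the strategy of \Cref{prop13}, producing in each case explicit generators of $\Delta$ that satisfy the defining relations of the putative isomorphism type. Specifically, in both cases I propose the candidate triple
$$
g_1 = \alpha^{x_v}\beta^{\rho(\Delta)}\gamma^{z_v},\quad g_2 = \beta^{l(\Delta)},\quad g_3 = \alpha^{x_u}\beta^{Y}\gamma^{z_u},
$$
where the $\beta$-exponent $Y$ is a yet-to-be-fixed integer with $Y \equiv \eps(\Delta) \pmod{l(\Delta)}$, so that $g_3 \in \Delta$ by the definition of $\eps$. Each $g_i$ lies in $\Delta$ by construction. The only freedom is in the precise lift $Y$ of $\eps(\Delta)$, and the choice of lift will be dictated by the case we are in.

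First I would carry out the key calculation: using the multiplication law \ref{multlaw2} together with the parities of $x_v$ (odd) and $x_u$ (even) from \Cref{prop2-8}, a direct computation yields $g_3 g_1 = g_1 g_3 \cdot \beta^{-(2Y+z_u)}$, so the commutator of $g_1$ and $g_3$ is controlled entirely by the integer $2Y+z_u$. In case $2\eps(\Delta) \equiv -z_u \pmod{2l(\Delta)}$ the choice $Y = -z_u/2$ is admissible and makes $2Y+z_u = 0$, so $g_1,g_3$ commute exactly; together with $g_3 g_2 g_3^{-1} = g_2$ and $g_1 g_2 g_1^{-1} = g_2^{-1}$ (both immediate from the parities of $x_u,x_v$ and from $\gamma\beta = \beta\gamma$), this reproduces the presentation \ref{fundplus3} of $\pi_{1}(\mathcal{B}_1)$ under $a \mapsto g_1$, $b \mapsto g_2$, $c \mapsto g_3$. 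In case $2\eps(\Delta) \equiv -z_u + l(\Delta) \pmod{2l(\Delta)}$ I would instead pick $Y$ with $2Y + z_u = -l(\Delta)$, obtaining the twisted commutator $g_1 g_3 g_1^{-1}g_3^{-1} = g_2$, which matches exactly the relation $\alpha\gamma\alpha^{-1}\gamma^{-1} = \beta$ of \ref{fundminus3}. The companion relations $g_3 g_2 g_3^{-1} = g_2$ and $g_1 g_2 g_1^{-1} = g_2^{-1}$ go through unchanged, yielding the full presentation of $\pi_{1}(\mathcal{B}_2)$.

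In each case this gives a well-defined surjection from the model group to $\Delta$; promoting it to an isomorphism is done by the "no improper relations" argument of \Cref{prop13}: if an extra relation existed, the canonical-form argument of \Cref{prop2-2} (with the generators $g_1,g_2,g_3$ playing the role of $\alpha,\beta,\gamma$) would reduce it to $\beta^{k\, l(\Delta)} = e$ for some $k \neq 0$, contradicting the fact that $\pi_{1}(\mathcal{B}_2)$ is torsion-free. The main obstacle is the careful bookkeeping with the twisted multiplication law \ref{multlaw2}, because the conjugation rule now depends on both the $\beta$- and $\gamma$-exponents; in particular, the extra $-z$ term in \ref{multlaw2} is exactly what produces the $-l(\Delta)$ shift in $2Y+z_u$ that separates the two cases and explains why the dichotomy of $\eps(\Delta)$ modulo $2l(\Delta)$ (rather than just modulo $l(\Delta)$) is the decisive invariant.
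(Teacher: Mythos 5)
Your proposal is correct and takes essentially the same approach as the paper: the paper gives no separate proof of \Cref{prop2-13}, leaving it as the analogue of \Cref{prop13}, and your argument is precisely that analogue — exhibit lifted generators of $\Delta$ with a suitably chosen $\beta$-exponent $Y\equiv\eps(\Delta)\pmod{l(\Delta)}$, verify the $Pc$- or $Cc$-relations via the multiplication law \ref{multlaw2} (your computation $g_3g_1=g_1g_3\beta^{-(2Y+z_u)}$ and the two admissibility conditions $2Y+z_u=0$, resp. $\pm l(\Delta)$, match the two congruence cases exactly), and then rule out improper relations by applying $\psi$, the nonsingularity of the matrix of $\overline v,\overline u$, and torsion-freeness.
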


\subsection{The proof of Theorem 4}

Proceed to the proof of \Cref{th-Second-1}. First of all we show
that there exist only 3 types of subgroups in
$\pi_{1}(\mathcal{B}_{2})$. This proof follows the lines of the
proof of \Cref{th1}. Consider a subgroup $\Delta$ of index $n$ in
$\pi_{1}(\mathcal{B}_{2})$. Then either $\psi(\Delta) \leqslant
\{(2p,q\mid p\in\ZZ, \,q\in\ZZ )\}$ or $\phi(\Delta) \nleqslant
\{(2p,q\mid p\in\ZZ, \,q\in\ZZ )\}$. In the first case the
\Cref{prop2-4} states that $\Delta$ is abelian and $\Delta \leqslant
\langle a^2,b,c \rangle$, thus  the group $\Delta \cong \ZZ^3$ as a
subgroup of finite index in $\ZZ^3$.

If $\psi(\Delta) \nleqslant \{(2p,q\mid p\in\ZZ, \,q\in\ZZ )\}$ then
$\Delta$ is bijectively determined by an $n$-essential 4-plet
$(l(\Delta),\phi(\Delta),\rho(\Delta),\eps(\Delta))$ in virtue of
\Cref{prop2-12}. Recall that $2\eps(\Delta) \equiv -z_u \mod
l(\Delta)$. Thus there are only two cases: $2\eps(\Delta) \equiv
-z_u+l(\Delta) \mod 2l(\Delta)$ and $2\eps(\Delta)\equiv -z_u \mod
2l(\Delta)$  (the latter one is possible only if $l(\Delta)$ is
even).

In case $2\eps(\Delta) \equiv -z_u+l(\Delta) \mod 2l(\Delta)$
\Cref{prop2-13} claims that $\Delta \cong Cc$. In case
$2\eps(\Delta)\equiv -z_u \mod 2l(\Delta)$ \Cref{prop2-13} yields
$\Delta \cong Pc$. Thus we proved that $\Delta$ is isomorphic to one
of the groups $Cc$, $\ZZ^3$ and $Pc$, and the latter two cases are
possible only if $n$ is even. Consider all three cases separately.

{\bf Case (i).} The number $s_{\ZZ^3, \pi_1(\mathcal{B}_{2})}(n)$ is
calculated in \Cref{prop2-5}.

{\bf Case (ii).} To find the number of subgroups, isomorphic to
$\pi_1(\mathcal{B}_{2})$ by Propositions \ref{prop2-12} and
\ref{prop2-13} we need to calculate the cardinality of the set of
$n$-essential 4-plets with $2\eps(\Delta) \equiv -z_u+l(\Delta) \mod
2l(\Delta)$, i.e.
$$
\{(l(\Delta),\phi(\Delta),\rho(\Delta),0)\mid
(l(\Delta),\phi(\Delta),\rho(\Delta),\frac{-z_u+l(\Delta)}{2})
\,\,\mbox{is an n-essential 4-plet}\}.
$$
Keeping in mind the definition of an $n$-essential 4-plet we see
that $l(\Delta)$ is an arbitrary factor of $n$. The amount of
possible $\psi(\Delta)$ depending of $l(\Delta)$ may be calculated
the following way. By definition of $n$-essential 4-plet
$\psi(\Delta) \leqslant \ZZ^2, \psi(\Delta)\nleqslant \{(2p,q\mid
p\in\ZZ, \,q\in\ZZ )\}\,\, \mbox{and} \,\, \big[\ZZ^2 : \psi(\Delta)
\big]=n/l(\Delta)$. The total amount of $\psi(\Delta)$, such that
$\big[\ZZ^2 : \psi(\Delta) \big]=n/l(\Delta)$ is
$\sigma_1(\frac{n}{l(\Delta)})$, (see \cite{LisM} Corollary 4.4).
Analogously the amount of $\psi(\Delta)$, such that
$\psi(\Delta)\leqslant \{(2p,q\mid p\in\ZZ, \,q\in\ZZ )\}\,\,
\mbox{and} \,\, \big[\ZZ^2 : \psi(\Delta) \big]=n/l(\Delta)$ is
$\sigma_1(\frac{n}{2l(\Delta)})$ (we consider
$\sigma_1(\frac{n}{2l(\Delta)})=0$ if $\frac{n}{2l(\Delta)}$ is not
integer). Thus amount of required $\psi(\Delta)$ is
$\sigma_1(\frac{n}{l(\Delta)})-\sigma_1(\frac{n}{2l(\Delta)})$. The
amount of possible $\rho(\Delta)$ does not depends on a choice of
$\psi(\Delta)$ and equals $\l(\Delta)$. For every fixed 3-plet
$l(\Delta),\psi(\Delta),\rho(\Delta)$ either exists a unique
$\eps(\Delta)$, or none.

Unique $\eps(\Delta)$ exists if both $l(\Delta)$ and $z_u$ are odd,
or both are even, by \Cref{prop2-8.5} this means $l(\Delta)$ and
$\frac{n}{l(\Delta)}$ are both odd or both even. First case is
equivalent the statement that $n$ is odd, second case means
$\l(\Delta)=2k$ and $4k \mid n$ for some integer $k$. Thus we get
the formula.
$$
s_{\pi_{1}(\mathcal{B}_{2}),\pi_{1}(\mathcal{B}_{2})}=\left\{
\begin{aligned}
\sum_{4k \mid n}2k\big(\sigma_1(\frac{n}{2k})-\sigma_1(\frac{n}{4k})\big) \quad\text{if } n \,\,\text {is even} \\
\sum_{k \mid n} \sigma_1(\frac{n}{k})k \quad \text{if } n \,\,\text {is odd} \\
\end{aligned} \right.
$$

{\bf Case (iii).} Arguing similarly we get that the amount of
subgroups, isomorphic to $Cc$ is
$$
\sum_{2l \mid n} 2l\sigma_1(\frac{n}{2l})-\sum_{4l \mid
n}2l\sigma_1(\frac{n}{4l}).
$$

Thus the proof of \Cref{th-Second-1} is completed.

\begin{remark}
Notice that $s_{\pi_{1}(\mathcal{B}_{2})}$ was obtained by different
method by M.N.Shmatkov in PhD thesis \cite{Shm} (see p.~156--157)
and can be calculated by the following formula
\begin{equation}
s_{\pi_{1}(\mathcal{B}_{2})}= \left\{ \begin{aligned}
\sum_{k \mid n} \sigma_1(\frac{n}{k})k \quad \text{if } n \,\,\text {is odd} \\
\sum_{k \mid n,\, (2,\frac{n}{k})=(2,n) }((2,k)(\sigma_1(\frac{n}{k})-\sigma_1(\frac{n}{2k}))+k\sigma_1(\frac{n}{2k}))k \quad\text{if } n \,\,\text {is even} \\
\end{aligned} \right.
\end{equation}
\end{remark}

\subsection{The proof Theorem 5}\label{lulz2-2}

The proof of \Cref{th-Second-1.2} follows the same way as the proof
of \Cref{th1.2}.

Substituting the formulas from \Cref{th-Second-1}  and \Cref{epi} to
Mednykh's Theorem we get the statement of \Cref{th-Second-1.2}.

\subsection{The proof Theorem 6}

The isomorphism types of subgroups are already provided by
\Cref{th-Second-1}. Thus we have to calculate the number of
conjugacy classes for each type separately.

\begin{lemma}
$$
c_{\ZZ^3,\pi_1(\mathcal{B}_2)}(n)= \frac{1}{2}\sum_{2l \mid n}
\sum_{m \mid \frac{n}{2l}} \Big(l^2 +
\frac{3}{2}+\frac{1}{2}(-1)^{l}+(-1)^{\frac{n}{2lm}} +
(-1)^{l+\frac{n}{2lm}}\Big)m.
$$
\end{lemma}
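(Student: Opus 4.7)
The plan is to adapt the proof of the analogous lemma giving $c_{\ZZ^3,\pi_1(\mathcal{B}_1)}(n)$ from Section~\ref{lulz2}, the essential new feature being that conjugation by $\alpha$ in $\pi_1(\mathcal{B}_2)$ acts on the abelian subgroup $A=\langle\alpha^2,\beta,\gamma\rangle$ as a shear rather than a pure involution. By \Cref{prop2-4} every $\Delta\cong\ZZ^3$ lies in $A$, which has index $2$; since $A$ is abelian, conjugation by any element of $A$ fixes $\Delta$ setwise, so the conjugacy class of $\Delta$ is $\{\Delta,\Delta^\alpha\}$ (of size $1$ or $2$). Writing $T=s_{\ZZ^3,\pi_1(\mathcal{B}_2)}(n)$ (known from \Cref{th-Second-1}(i)) and $S$ for the number of $\alpha$-self-conjugate $\Delta$, orbit counting gives $c_{\ZZ^3,\pi_1(\mathcal{B}_2)}(n)=(T+S)/2$.

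A direct computation with the multiplication law of \Cref{prop2-2} yields
\[
\alpha\cdot\alpha^{2a}\beta^b\gamma^c\cdot\alpha^{-1}=\alpha^{2a}\beta^{-b-c}\gamma^c.
\]
An argument modelled on \Cref{prop2-12} puts the index-$n$ subgroups $\Delta\leqslant A$ in bijection with $4$-tuples $(l,\psi(\Delta),y_v,y_u)$, where $2l\mid n$, $\psi(\Delta)\leqslant\langle(2,0),(0,1)\rangle$ has index $n/(2l)$, and $y_v,y_u\in\ZZ/l\ZZ$ are the $\beta$-exponents of lifts of a generating pair $\overline v=(x_v,z_v),\overline u=(x_u,z_u)$ of $\psi(\Delta)$. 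Under $\alpha$-conjugation the tuple transforms as $(l,\psi(\Delta),y_v,y_u)\mapsto(l,\psi(\Delta),-y_v-z_v,-y_u-z_u)\bmod l$, so $\Delta^\alpha=\Delta$ iff $2y_v\equiv -z_v\pmod l$ and $2y_u\equiv -z_u\pmod l$.

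Counting self-conjugate subgroups by cases: for $l$ odd each congruence admits a unique solution, so every $\psi(\Delta)$ contributes one, giving $\sigma_1(n/(2l))$ in total. For $l$ even the congruences are solvable exactly when $z_v$ and $z_u$ are both even, i.e.\ when $\psi(\Delta)\leqslant\langle(2,0),(0,2)\rangle$, and in that case admit $4$ solutions; the number of such ``even-even'' sublattices of index $n/(2l)$ equals the number of index-$n/(4l)$ sublattices of $\langle(2,0),(0,2)\rangle\cong\ZZ^2$, namely $\sigma_1(n/(4l))$ (vanishing unless $4l\mid n$). Together with $T=\sum_{2l\mid n}l^2\sigma_1(n/(2l))$ this gives an explicit expression for $T+S$.

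It remains to repackage this into the stated closed form. Using $\sigma_1(k)=\sum_{m\mid k}m$ together with the divisor identity
\[
\sum_{m\mid k}m(-1)^{k/m}=2\sigma_1(k/2)-\sigma_1(k)
\]
(proved by splitting divisors of $k$ according to their $2$-adic valuation), one verifies that with $k=n/(2l)$ the inner sum $\sum_{m\mid k}m\bigl(\tfrac32+\tfrac12(-1)^l+(-1)^{k/m}+(-1)^{l+k/m}\bigr)$ collapses to $\sigma_1(n/(2l))$ when $l$ is odd and to $4\sigma_1(n/(4l))$ when $l$ is even, exactly matching the $l$-th summand of $S$; the $l^2m$-contribution assembles $T$. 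Halving yields the asserted formula. The only nontrivial manipulation is this last repackaging step, which hinges on the divisor identity above; it is the device that translates the parity-based case split for $S$ into the uniform sign pattern involving $(-1)^{n/(2lm)}$ and $(-1)^{l+n/(2lm)}$.
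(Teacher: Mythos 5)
Your proposal is correct and follows essentially the same route as the paper: reduce to the orbit count $\tfrac12(T+S)$ using $\Delta\leqslant\langle\alpha^2,\beta,\gamma\rangle$ and the class $\{\Delta,\Delta^{\alpha}\}$, parametrize the abelian subgroups by $(l,\psi(\Delta),y_v,y_u)$, and count $\alpha$-fixed subgroups via the parity case split. The only difference is bookkeeping: the paper uses Hermite-form generators (so the parity of $z_u=n/(2lm)$ is tied directly to the summation index $m$ and the closed form drops out), whereas you use a general generating pair, count the ``even-even'' lattices by $\sigma_1(n/(4l))$, and recover the sign pattern through the divisor identity --- a harmless variation.
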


\begin{proof}
Let $\Delta$ be a subgroup of index $n$ in $\pi_1(\mathcal{B}_{2})$,
isomorphic to $\ZZ^3$. Then $\Delta \leqslant \langle
\alpha^2,\beta,\gamma\rangle$ by \Cref{prop2-4}. Thus
$\Delta^{\alpha^2}=\Delta^{\beta}=\Delta^{\gamma}=\Delta$, so the
conjugacy class of $\Delta$ in $\pi_{1}(\mathcal{B}_{2})$ contains
at most two subgroups: $\Delta$ and $\Delta^{\alpha}$. Thus we have
to find out whether $\Delta=\Delta^{\alpha}$.

The arguments, analogous to \Cref{prop2-12} shows that there is a
bijection between an isomorphic to $\ZZ^3$ subgroups $\Delta
\leqslant \pi_{1}(\mathcal{B}_{2})$ and 4-plets
$l(\Delta),\psi(\Delta),y_v(\Delta),y_u(\Delta)$, such that
\begin{itemize}
\item $l(\Delta)$ is a positive divisor of $n$
\item $\phi(\Delta) \leqslant \langle a^2,b,c \rangle$ and $[\langle a^2,b,c \rangle: \phi(\Delta)]=\frac{n}{2l(\Delta)}$,
\item Choose two residues modulo $l(\Delta)$, $y_v(\Delta)$ and $y_u(\Delta)$. Here $\alpha^{2l_1}\beta^{y_v} \in \Delta$
and $\alpha^{2x_u}\beta^{y_u}\gamma^{z_u} \in \Delta$ are preimages
of $\overline{v}$ and $\overline{u}$ for the homomorphism $\psi$.
\end{itemize}

Obviously, $l(\Delta)=l(\Delta^{\alpha})$,
$\psi(\Delta)=\psi(\Delta^{\alpha})$,
$y_v(\Delta)=-y_v(\Delta^{\alpha})$,
$y_u(\Delta)=-y_u(\Delta^{\alpha})$. Thus we have to find the number
of pairs $(y_v,y_u)$ of residues modulo $l(\Delta)$, such that $y_v
\equiv -y_v$ and $y_u \equiv z_u -y_u \mod l(\Delta)$.
 If $l(\Delta)$ is odd there is 1 pair
$(0,\frac{z_u}{2})$,
 if $l(\Delta)$ is even and $\frac{n}{2l_1(\Delta)l(\Delta)}$ odd then there no such pairs, if both $l(\Delta)$ and $\frac{n}{2l_1(\Delta)l(\Delta)}$ are even then there are 4 pairs: $(0,\frac{z_u}{2})$, $(0,\frac{z_u+l(\Delta)}{2})$,
$(\frac{l(\Delta)}{2},\frac{z_u}{2})$ and
$(\frac{l(\Delta)}{2},\frac{z_u+l(\Delta)}{2})$. So the amount of
this pairs as a function of the parities of $l(\Delta)$ and
$\frac{n}{2l_1(\Delta)l(\Delta)}$ is given by the formula
$\frac{3}{2}+\frac{1}{2}(-1)^{l}+(-1)^{\frac{n}{2lm}} +
(-1)^{l+\frac{n}{2lm}}$.

\end{proof}

\begin{lemma}\label{part2for-th3}
$$
c_{\pi_1(\mathcal{B}_2),\pi_1(\mathcal{B}_2)}(n)=\left\{
\begin{aligned}
\sum_{4k \mid n}(\sigma_1(\frac{n}{2k})-\sigma_1(\frac{n}{4k})) \quad\text{if } n \,\,\text {is even} \\
\sum_{l \mid n} \sigma_1(\frac{n}{l}) \quad \text{if } n \,\,\text {is odd} \\
\end{aligned} \right. \leqno (ii)
$$
\end{lemma}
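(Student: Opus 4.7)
The plan follows the lines of \Cref{part2for-th3}. By \Cref{prop2-12} and \Cref{prop2-13}, subgroups $\Delta\leqslant \pi_{1}(\mathcal{B}_{2})$ of index $n$ isomorphic to $\pi_{1}(\mathcal{B}_{2})$ correspond bijectively to the $n$-essential 4-plets $(l(\Delta),\psi(\Delta),\rho(\Delta),\eps(\Delta))$ satisfying $2\eps(\Delta)\equiv -z_u + l(\Delta)\pmod{2l(\Delta)}$. By \Cref{prop2-8.5} the very existence of such a 4-plet forces $l(\Delta)$ and $n/l(\Delta)$ to share parity, which already explains the two-case split in the statement.

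First I would compute how conjugation by each of the generators $\alpha,\beta,\gamma$ acts on the 4-plet. A direct calculation from the multiplication law \eqref{multlaw2} (splitting on the parity of the $\alpha$-exponent of the element being conjugated) shows that $l(\Delta)$ and $\psi(\Delta)$ are preserved by any inner automorphism, and that the remaining invariants transform as follows: $\alpha$-conjugation sends $\rho\mapsto -\rho-z_v$ and $\eps\mapsto -\eps-z_u$ (both $\bmod\,l(\Delta)$); $\beta$-conjugation sends $\rho\mapsto\rho+2$ and fixes $\eps$; and $\gamma$-conjugation sends $\rho\mapsto\rho+1$ and fixes $\eps$.

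The decisive observation is then twofold. On the one hand, the congruence $2\eps\equiv -z_u+l(\Delta)\pmod{2l(\Delta)}$ determines $\eps$ \emph{uniquely} modulo $l(\Delta)$, so $\eps$ is fixed by $(l,\psi)$ and contributes no extra invariant. (A quick check confirms that the $\alpha$-action $\eps\mapsto -\eps-z_u$ indeed preserves this congruence, as it must since the isomorphism class is an inner-automorphism invariant.) On the other hand, the single step $\rho\mapsto\rho+1$ generated by $\gamma$-conjugation sweeps out every residue of $\rho$ modulo $l(\Delta)$. Consequently the conjugacy class of $\Delta$ is determined entirely by the pair $(l(\Delta),\psi(\Delta))$, and the enumeration reduces to counting admissible pairs.

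Finally I would count. For each $l\mid n$ with $l\equiv n/l\pmod 2$, the number of subgroups $\psi\leqslant\ZZ^2$ of index $n/l$ that are not contained in $\{(2p,q)\mid p,q\in\ZZ\}$ equals $\sigma_1(n/l)-\sigma_1(n/(2l))$, exactly as in the proof of \Cref{th-Second-1}(ii). For odd $n$ every divisor $l$ qualifies and $\sigma_1(n/(2l))=0$, yielding $\sum_{l\mid n}\sigma_1(n/l)$; for even $n$ the admissible $l$ are precisely $l=2k$ with $4k\mid n$, yielding $\sum_{4k\mid n}\bigl(\sigma_1(n/(2k))-\sigma_1(n/(4k))\bigr)$, which matches the two branches of the claim. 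I expect the main, though essentially mechanical, obstacle to be the bookkeeping of the conjugation formulas and the verification that the $\alpha$-action on $\eps$ is compatible with the isomorphism-type constraint; once these are in hand the lemma is automatic.
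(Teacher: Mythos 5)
Your proposal is correct and follows essentially the same route as the paper: the bijection with $n$-essential 4-plets satisfying $2\eps\equiv -z_u+l(\Delta)\pmod{2l(\Delta)}$, the observation that $\gamma$-conjugation shifts $\rho$ by $1$ so only $(l(\Delta),\psi(\Delta))$ survives as a conjugacy invariant, the parity condition via \Cref{prop2-8.5}, and the count $\sigma_1(n/l)-\sigma_1(n/(2l))$ per admissible $l$. The only (immaterial) deviation is a sign in the $\alpha$-action on $\rho$ and your extra check of the $\alpha$-action on $\eps$, which the paper omits.
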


\begin{proof}
Let $\Delta$ be a subgroup of index $n$ in $\pi_1(\mathcal{B}_{2})$
isomorphic to $\pi_1(\mathcal{B}_{2})$. Recall that by
\Cref{prop2-12} and \Cref{prop2-13} there is a bijection between an
isomorphic to $\pi_{1}(\mathcal{B}_{2})$ subgroups $\Delta \leqslant
\pi_{1}(\mathcal{B}_{2})$ and $n$-essential 4-plets
$(l(\Delta),\phi(\Delta),\rho(\Delta),\eps(\Delta))$, such that
$2\eps(\Delta) \equiv -z_u+l(\Delta) \mod{2l(\Delta)}$.

Obviously, $l(\Delta)=l(\Delta^d)$ and $\phi(\Delta)=\phi(\Delta^d)$
for any $d \in  \pi_{1}(\mathcal{B}_{2})$.  Also
$\rho(\Delta^{\alpha})=-\rho(\Delta)+z_v$,
$\rho(\Delta^{\beta})=\rho(\Delta)+2$ and
$\rho(\Delta^{\gamma})=\rho(\Delta)+1$. Thus subgroups that differ
only in the parameter $\rho$ corresponds to one class of conjugancy.
Also the required $\eps(\Delta)$ exists iff $l(\Delta)$ is odd and
$z_v$ is odd or $l(\Delta)$ is even and $z_v$ is even. By
\cref{prop2-8.5} the $z_u \equiv \frac{n}{l(\Delta)} \mod2$, thus
$\eps(\Delta)$ exists if $n$ is odd or $l(\Delta) \mid \frac{n}{2}$
and $l(\Delta)$ is even. So, we get the amount
$$
\left\{ \begin{aligned}
\sum_{l \mid n} \sigma_1(\frac{n}{l}) \quad \text{if } n \,\,\text {is odd} \\
\sum_{l \mid \frac{n}{2},\, 2\mid l}(\sigma_1(\frac{n}{l})-\sigma_1(\frac{n}{2l})) \quad\text{if } n \,\,\text {is even} \\
\end{aligned} \right.
$$


Summarizing the above considerations. $l(\Delta)$ can be any
positive divisor of $n$, thus we get the sum over all divisors the
amount of corresponding pairs $(\phi(\Delta),\rho(\Delta)\mod 2)$.
The amount of all $\phi(\Delta)$, such that
$[\ZZ^2:\phi(\Delta)]=n/l(\Delta)$ is
$\sigma_1(\frac{n}{l(\Delta)})$ the amount of $\phi(\Delta)$, such
that $\phi(\Delta)$ is a subgroup of index $\frac{n}{2l(\Delta)}$ in
$\{(2p,q\mid p\in\ZZ, \,q\in\ZZ )\}$ is
$\sigma_1(\frac{n}{2l(\Delta)})$ (again, we consider
$\sigma_1(\frac{n}{2l(\Delta)})=0$ if $\frac{n}{2l(\Delta)}$ is not
integer). Thus the amount $\phi(\Delta)$, satisfying the condition
of $n$-essential 4-plet is $\sigma_1(\frac{n}{l(\Delta)}) -
\sigma_1(\frac{n}{2l(\Delta)})$. We multiply it by the amoumt of
possible parities of $\rho(\Delta)$, i.e. 2 for even $l(\Delta)$ and
1 for odd, in other words by
$\Big(\frac{3}{2}+\frac{1}{2}(-1)^{l(\Delta)}\Big)$. Thus we get
$$
\aligned &
c_{\pi_{1}(\mathcal{B}_{1}),\pi_{1}(\mathcal{B}_{1})}(n)=\sum_{l
\mid n}
\Big(\frac{3}{2}+\frac{1}{2}(-1)^l\Big)\Big(\sigma_1(\frac{n}{l})-\sigma_1(\frac{n}{2l})\Big)=
\\& = \sum_{l
\mid n} \Big(\frac{3}{2}+\frac{1}{2}(-1)^l\Big)\sigma_1(\frac{n}{l})
- \sum_{2l \mid n}
\Big(\frac{3}{2}+\frac{1}{2}(-1)^l\Big)\sigma_1(\frac{n}{2l})
\endaligned
$$
\end{proof}

\begin{lemma}
$$
c_{\pi_{1}(\mathcal{B}_{2}),\pi_{1}(\mathcal{B}_{1})}(n) =
2\Big(\sum_{2k \mid n}\sigma_1(\frac{n}{2k}) - \sum_{4k \mid
n}\sigma_1(\frac{n}{4k})\Big).
$$
\end{lemma}

\begin{proof}
The proof is analogous to \Cref{part2for-th3}.
\end{proof}

\section{Acknowledgment}
The authors are grateful to V. A. Liskovets, R. Nedela, M.~Shmatkov for helpful discussions during the work of this paper.

\section{Appendix}
Here we put some tables which illustrated our formulas. Note that numerical results  for $n$ from $1$ to $9$ were previously obtained in \cite{babaika}.





\bigskip

\subsection{Total number $c_{\pi_1(\mathcal{B}_1)}(n)$ of $n$-coverings over the first amphicosm}
\begin{center}
\begin{tabular}{|c|clllllllllllllll|}
\hline
\textbf{n }&1&2&3&4&5&6&7&8&9&10&11&12&13&14&15&16 \\ \cline{1-17}
\textbf{$c_{\pi_1(\mathcal{B}_1)}(n)$} & 1  & 7 & 5 & 23 & 7  & 39  & 9  & 65  & 18  & 61 & 13 & 143  & 15& 87& 35 & 183\\
\hline
\end{tabular}

\end{center}
\begin{center}
\small{Table~2}
\end{center}
 \bigskip

\subsection{The number $c_{\ZZ^3,\pi_1(\mathcal{B}_1)}(2n)$ of 3-torus  $2n$-coverings over the first amphicosm }
\begin{center}
\begin{tabular}{|c|clllllllllllllll|}
\hline
\textbf{n }&1&2&3&4&5&6&7&8&9&10&11&12&13&14&15&16\\ \cline{1-17}
\textbf{$c_{\ZZ^3,\pi_1(\mathcal{B}_1)}(2n)$} & 1  & 7  & 9  & 29  &19  & 63  & 33 & 107 & 74 & 133 & 73 & 285 & 99& 231& 219& 393\\
\hline
\end{tabular}

\end{center}
\begin{center}

\small{Table~3}
\end{center}
 \bigskip

\subsection{The number $c_{Cc,\pi_1(\mathcal{B}_1)}(2n)$ of the second amphicosm  $2n$-coverings over the first amphicosm }
\begin{center}
\begin{tabular}{|c|clllllllllllllll|}
\hline
\textbf{n }&1&2&3&4&5&6&7&8&9&10&11&12&13&14&15&16 \\ \cline{1-17}
\textbf{$c_{Cc,\pi_1(\mathcal{B}_1)}(2n)$} & 2  & 6  & 10 & 14  & 14 & 30  & 18 & 30 & 36 & 42 & 26 & 70 & 30& 54& 70& 62\\
\hline
\end{tabular}

\bigskip

\small{Table~4}
\end{center}

\subsection{The number $c_{Pc,\pi_1(\mathcal{B}_1)}(n)$ of the first amphicosm  $n$-coverings over the first amphicosm }
\begin{center}
\begin{tabular}{|c|clllllllllllllll|}
\hline
\textbf{n }&1&2&3&4&5&6&7&8&9&10&11&12&13&14&15&16 \\ \cline{1-17}
\textbf{$c_{Pc,\pi_1(\mathcal{B}_1)}(n)$} & 1  & 4  & 5 & 10  & 7 & 20  & 9 & 22 & 18 & 28 & 13 & 50 & 15& 36& 35& 46\\
\hline
\end{tabular}

\bigskip

\small{Table~5}
\end{center}


\subsection{Total number $c_{\pi_1(\mathcal{B}_2)}(n)$ of $n$-coverings over the second amphicosm}
\begin{center}
\begin{tabular}{|c|clllllllllllllll|}
\hline
\textbf{n }&1&2&3&4&5&6&7&8&9&10&11&12&13&14&15&16 \\ \cline{1-17}
\textbf{$c_{\pi_1(\mathcal{B}_2)}(n)$} & 1  & 3 & 5 & 13 & 7  & 19  & 9  & 43  & 18  & 33 & 13 & 93  & 15& 51& 35 & 137\\
\hline
\end{tabular}

\end{center}

\begin{center}
\small{Table~6}
\end{center}
 \bigskip

 \subsection{The number $c_{\ZZ^3,\pi_1(\mathcal{B}_2)}(2n)$ of 3-torus  $2n$-coverings over the second amphicosm }

\begin{center}
\begin{tabular}{|c|clllllllllllllll|}
\hline
\textbf{n }&1&2&3&4&5&6&7&8&9&10&11&12&13&14&15&16\\ \cline{1-17}
\textbf{$c_{\ZZ^3,\pi_1(\mathcal{B}_2)}(2n)$} & 1  & 5  & 9  & 23  &19  & 53  & 33 & 93 & 74 & 119 & 73 & 255 & 99& 213& 219& 363\\
\hline
\end{tabular}

\end{center}

\begin{center}

\small{Table~7}
\end{center}
 \bigskip

\subsection{The number $c_{Pc,\pi_1(\mathcal{B}_2)}(2n)$ of the first amphicosm  $2n$-coverings over the second amphicosm }

\begin{center}
\begin{tabular}{|c|clllllllllllllll|}
\hline
\textbf{n }&1&2&3&4&5&6&7&8&9&10&11&12&13&14&15&16 \\ \cline{1-17}
\textbf{$c_{Pc,\pi_1(\mathcal{B}_2)}(2n)$} & 2  & 6  & 10 & 14  & 14 & 30  & 18 & 30 & 36 & 42 & 26 & 70 & 30& 54& 70& 62\\
\hline
\end{tabular}

\end{center}

\begin{center}

\small{Table~8}
\end{center}

\subsection{The number $c_{Cc,\pi_1(\mathcal{B}_2)}(n)$ of the second amphicosm  $n$-coverings over the second amphicosm }
\begin{center}
\begin{tabular}{|c|clllllllllllllll|}
\hline
\textbf{n }&1&2&3&4&5&6&7&8&9&10&11&12&13&14&15&16 \\ \cline{1-17}
\textbf{$c_{Cc,\pi_1(\mathcal{B}_2)}(n)$} & 1  & 0  & 5 & 2  & 7 & 0  & 9 & 6 & 18 & 0 & 13 & 10 & 15& 0& 35& 14\\
\hline
\end{tabular}

\end{center}

\begin{center}

\small{Table~8}
\end{center}
\newpage

\label{finish}

\end{document}